\newcounter{ctr}
\theoremstyle{plain}
\newtheorem{theorem}{Theorem}[section]
\newtheorem{lemma}[theorem]{Lemma}
\newtheorem{corollary}[theorem]{Corollary}
\newtheorem{proposition}[theorem]{Proposition}
\newtheorem{propdef}[theorem]{Proposition-Definition}
\theoremstyle{definition}
\newtheorem{definition}[theorem]{Definition}
\newtheorem{remark}[theorem]{Remark}
\newtheorem{example}[theorem]{Example}
\newcommand{\ignore}[1]{}
\newcommand{\CC}{\ensuremath{\mathbb{C}}}
\newcommand{\End}{\text{\rm End}}
\renewcommand{\H}{\ensuremath{\mathscr{H}}}
\renewcommand{\hom}{\text{\rm Hom}}
\newcommand{\ind}{\text{\rm Ind}}
\renewcommand{\L}{\ensuremath{\mathscr{L}}}
\newcommand{\QQ}{\ensuremath{\mathbb{Q}}}
\newcommand{\ZZ}{\ensuremath{\mathbb{Z}}}
\newcommand{\be}{\begin{equation}}
\newcommand{\ee}{\end{equation}}
\newcommand{\Res}{\text{\rm Res}}
\renewcommand{\S}{\ensuremath{\mathcal{S}}}
\newcommand{\tsr}{\ensuremath{\otimes}}
\newcommand{\tsrvw}{\tsr}
\newcommand{\C}{\ensuremath{C^{\prime}}} 
\newcommand{\liftCp}{\ensuremath{\tilde{C}^{\prime}}}
\newcommand{\liftCup}{\ensuremath{\tilde{C}}}
\newcommand{\br}[1]{\ensuremath{\overline{#1}}}
\newcommand{\nsbr}[1]{{\ensuremath{\check{#1}}}}
\renewcommand{\u}{\ensuremath{u}}  
\newcommand{\ui}{\ensuremath{u^{-1}}} 
 \newcommand{\dual}[1]{\ensuremath{{#1}^\vee}}
\newcommand{\gd}{\ensuremath{\trianglerighteq}} 
\newcommand{\gdneq}{\ensuremath{\triangleright}}
\newcommand{\ldneq}{\ensuremath{\triangleleft}}
\newcommand{\tleftright}{\ensuremath{\leftrightsquigarrow}}
\newcommand{\dkt}[1]{\raisebox{1pt}{\ensuremath{{\underset{\tiny#1}{\tleftright}}}}}
\newcommand{\sh}{\text{\rm sh}}
\newcommand{\transpose}[1]{{#1}^T}
\newcommand{\ngb}{\ensuremath{\tiny \hspace{-.36mm}-\hspace{-.7mm}}}
\newcommand{\nsH}{\nsbr{\H}}
\newcommand{\nsP}{\nsbr{\mathscr{P}}}
\DeclareMathOperator{\trace}{tr}
\newcommand{\two}{[2]}
\newcommand{\sP}{\mathcal{P}}
\newcommand{\sQ}{\mathcal{Q}}
\newcommand{\field}{\ensuremath{K}}
\newcommand{\klo}[1]{\ensuremath{\leq_{#1}}}
\newcommand{\klocov}[1]{\ensuremath{\preceq_{#1}}}
\newcommand{\halfr}{\lfloor\frac{r}{2}\rfloor}
\newcommand{\Wedge}{\ensuremath{\Lambda}}
\newcommand{\nssym}[2]{\ensuremath{\nsbr{S}^{#1}\nsbr{#2}}}
\newcommand{\nswedge}[2]{\ensuremath{\nsbr{\Wedge}^{#1}\nsbr{#2}}}
\newcommand{\f}{\ensuremath{[2]^2}}
\newcommand{\proj}{\ensuremath{\nsbr{p}^0}}
\newcommand{\projres}{\ensuremath{\nsbr{p}^1}}
        \newlength{\cellsizeCol}
        \newcommand\column[1]{
        \vtop{
        \let\\=\cr
        \baselineskip=-16000pt
        \lineskiplimit=16000pt
        \lineskip=0pt
        \halign{& \columncell{##} \cr \noalign{\hrule height \arrayrulewidth width \cellsizeCol} #1 \cr \noalign{\hrule height \arrayrulewidth width \cellsizeCol} \crcr} }
        \hspace{-.73ex}}
\newcommand\columnL[1]{
        \vtop{
        \let\\=\cr
        \baselineskip=-16000pt
        \lineskiplimit=16000pt
        \lineskip=0pt
        \halign{& \columncellFull{##} \cr \noalign{\hrule height \arrayrulewidth width \cellsizeCol} #1 \cr \noalign{\hrule height \arrayrulewidth width \cellsizeCol} \crcr} }
        \hspace{-.73ex}}
\newcommand\columnBB[1]{
        \vtop{
        \let\\=\cr
        \baselineskip=-16000pt
        \lineskiplimit=16000pt
        \lineskip=0pt
        \halign{& \columncellFullB{##} \cr \noalign{\hrule height \arrayrulewidth width \cellsizeCol} #1 \cr \noalign{\hrule height \arrayrulewidth width \cellsizeCol} \crcr} }
        \hspace{-.73ex}}
\newcommand\columnB[1]{
        \vtop{
        \let\\=\cr
        \baselineskip=-16000pt
        \lineskiplimit=16000pt
        \lineskip=0pt
        \halign{& \columncellFullBB{##} \cr \noalign{\hrule height .3mm width \cellsizeCol} #1 \cr \noalign{\hrule height .3mm width \cellsizeCol} \crcr} }
        \hspace{-.73ex}}
\newlength{\oldSize}
\newcommand\mybox[1]{
\vcenter{
\let\\=\cr
\baselineskip=-16000pt \lineskiplimit=16000pt \lineskip=0pt
\halign{&\boxcell{##}\cr\vline#1\vline\crcr}}}
\newcommand{\boxcell}[1]{{%
\unitlength=\cellsizeCol
\begin{picture}(1,1)
\put(0,0){\makebox(1,1){$#1$}}
\put(0,0){\line(1,0){1}}
\put(0,1){\line(1,0){1}}
\end{picture}%
}}
\newcommand\pad[1]{
\vtop{
\let\\=\cr
\baselineskip=-16000pt
\lineskiplimit=16000pt
\lineskip=0pt
\halign{& \inviscell{##} \cr #1 \crcr} }
\hspace{-.73ex}}
\newcommand{\inviscell}[1]{{%
\unitlength=\cellsizeCol
\begin{picture}(1,1)
\put(0,0){\makebox(1,1){$#1$}}
\end{picture}%
}}
\newcommand{\columncell}[1]{{%
\unitlength=\cellsizeCol
\begin{picture}(1,1)
\put(0,0){\makebox(1,1){\footnotesize \centering $#1$}}
\put(0,0){\line(0,1){1}}
\put(1,0){\line(0,1){1}}
\end{picture}%
}}
\newcommand{\columncellFull}[1]{{%
\unitlength=\cellsizeCol
\begin{picture}(1,1)
\put(0,0){\makebox(1,1){\centering $#1$}}
\put(0,0){\line(0,1){1}}
\put(1,0){\line(0,1){1}}
\end{picture}%
}}
\newcommand{\columncellFullB}[1]{{%
\unitlength=\cellsizeCol
\begin{picture}(1,1)
\put(0,0){\makebox(1,1){$\mathbf{#1}$}}
\put(0,0){\line(0,1){1}}
\put(1,0){\line(0,1){1}}
\end{picture}%
}}
\newcommand{\columncellFullBB}[1]{{%
\unitlength=\cellsizeCol
\begin{picture}(1,1)
\linethickness{0.3mm}
\put(0,0){\makebox(1,1){$\mathbf{#1}$}}
\put(0,0){\line(0,1){1}}
\put(1,0){\line(0,1){1}}
\end{picture}%
}}
\newlength{\cellsize}
\newcommand\tableau[1]{
\vcenter{
\let\\=\cr
\baselineskip=-16000pt \lineskiplimit=16000pt \lineskip=0pt
\halign{&\tableaucell{##}\cr#1\crcr}}}
\newcommand{\tableaucell}[1]{{%
\def \arg{#1}\def \void{}%
\ifx \void \arg
\vbox to \cellsize{\vfil \hrule width \cellsize height 0pt}%
\else \unitlength=\cellsize
\begin{picture}(1,1)
\put(0,0){\makebox(1,1){$#1$}}
\put(0,0){\line(1,0){1}}
\put(0,1){\line(1,0){1}}
\put(0,0){\line(0,1){1}}
\put(1,0){\line(0,1){1}}
\end{picture}%
\fi}}
\newlength{\colskip}
\newlength{\dwidth}
\newcommand{\dualtheta}{\ensuremath{\#}}
\newcommand{\dualone}{\ensuremath{\diamond}}
\begin{document}

\address{University of Michigan, Department of Mathematics, 2074 East Hall, 530 Church Street, Ann Arbor, MI 48109-1043}
\email{jblasiak@gmail.com} \keywords{Hecke algebra, Temperley-Lieb algebra, semisimple, Kronecker problem}

\author{Jonah Blasiak}\thanks{The author was partially supported by an NSF postdoctoral fellowship.}
\title{Representation theory of the nonstandard Hecke algebra}
\maketitle
\begin{abstract}
The nonstandard Hecke algebra $\nsH_r$ was defined by Mulmuley and Sohoni to study the Kronecker problem.  We study a quotient $\nsH_{r,2}$ of  $\nsH_r$, called the nonstandard Temperley-Lieb algebra,  which is a subalgebra of the symmetric square of the Temperley-Lieb algebra $\text{TL}_r$. We give a complete description of its irreducible representations.
We find that the restriction of an  $\nsH_{r,2}$-irreducible to  $\nsH_{r-1,2}$ is multiplicity-free, and as a consequence, any $\nsH_{r,2}$-irreducible has a seminormal basis that is unique up to a diagonal transformation.
\end{abstract}


\section{Introduction}

Let $\H_r$ be the type  $A_{r-1}$ Hecke algebra over $\mathbf{A} = \ZZ[\u, \ui]$ and  set  $\field := \QQ(\u)$.
The nonstandard Hecke algebra $\nsH_r$ is the subalgebra of  $\H_r \tsr \H_r$ generated by
\[\sP_i := \C_{s_i} \tsrvw \C_{s_i} + C_{s_i} \tsrvw C_{s_i}, \ i \in [r-1],\]
where $\C_{s_i}$ and $C_{s_i}$ are the simplest lower and upper Kazhdan-Lusztig basis elements, which are proportional to the trivial and sign idempotents of the parabolic sub-Hecke algebra $\field (\H_r)_{\{s_i\}}$.
The nonstandard Hecke algebra  was introduced by Mulmuley and Sohoni in \cite{GCT4} to study the Kronecker problem. The hope was that the  inclusion  $ \nsbr{\Delta} : \nsH_r \to \H_r \tsr \H_r$ would quantize the coproduct $\Delta: \ZZ\S_r \to \ZZ\S_r \tsr \ZZ \S_r$ of the group algebra $\ZZ \S_r$ and canonical basis theory could be applied to obtain formulas for Kronecker coefficients.  Unfortunately, this does not work in a straightforward way since the algebra $\nsH_r$ is almost as big as $\H_r \tsr \H_r$ and has  $\mathbf{A}$-rank much larger than $r!$, even though  $\nsbr{\Delta}$ is in a certain sense the quantization of $\Delta$ with image as small as possible (see \cite[Remark 11.4]{BMSGCT4}). Nonetheless, as discussed in \cite[\textsection1]{BMSGCT4}, \cite{GCT7, canonical}, and briefly in this paper, the nonstandard Hecke algebra may still be useful for the Kronecker problem.

Though the nonstandard Hecke algebra has yet to prove its importance for the Kronecker problem, it is an interesting problem in its own right to determine all the irreducible representations of $\field \nsH_{r}$. This problem is difficult, but within reach. In this paper, we solve an easier version of this problem.

It is shown in \cite{BMSGCT4} that  $\field \nsH_r$ is semisimple.
Let $\tau$ be the flip involution of $\H_r\tsrvw\H_r$ given by $h_1\tsrvw h_2\mapsto h_2 \tsrvw h_1$ and let $\theta : \H_r \to \H_r$ be the $\mathbf{A}$-algebra involution defined by $\theta(T_{s_i}) = - T_{s_i}^{-1},\ i \in [r-1]$.  Twisting an  $\H_r$-irreducible by $\theta$ corresponds to transposing its shape.
The algebra $\nsH_r$ is a subalgebra of $(S^2 \H_r)^{\theta \tsrvw \theta}$, the subalgebra of  $\H_r \tsrvw \H_r$ fixed by $\theta \tsrvw \theta$ and $\tau$.
Based on computations for $r \leq 6$, it appears that most of the $\field \nsH_r$-irreducibles are restrictions of $\field (S^2 \H_r)^{\theta \tsrvw \theta}$-irreducibles, except for the trivial and sign representations of  $\field \nsH_r$.

In this paper we focus on the simpler problem of determining the irreducibles of the nonstandard Temperley-Lieb algebra $\nsH_{r,2}$, which is a quotient of  $\nsH_{r}$. The algebra $\nsH_{r,2}$ is the subalgebra of $\H_{r,2} \tsr \H_{r,2}$ generated by $\sP_i := \C_{s_i} \tsrvw \C_{s_i} + C_{s_i} \tsrvw C_{s_i}, \ i \in [r-1]$, where $\H_{r,2}$ is the Temperley-Lieb algebra (see \textsection\ref{s irreducibles of nshr2}).

The main result of this paper is a complete description of the $\field \nsH_{r,2}$-irreducibles (Theorem \ref{t nsH irreducibles two row case}).  There are no surprises here: it is fairly easy to show that $\field (\H_{r,2} \tsr \H_{r,2})$-irreducibles decompose into certain $\field \nsH_{r,2}$-modules.  The difficulty is showing that these modules are actually irreducible. We prove this by induction on  $r$ and by computing the action of  $\sP_{r-1}$ on these modules in terms of canonical bases.
To carry out these computations, we use results from \cite{BProjected} about projecting the upper and lower canonical bases of a  $\field \H_{r}$-irreducible $M_\lambda$ onto its $\field \H_{r-1}$-irreducible isotypic components. We also use the well-known fact that the edge weight $\mu(x,w)$, $x, w \in \S_r$, of the  $\S_r$-graph  $\Gamma_{\S_r}$ is equal to 1 whenever $x$ and $w$ differ by a dual Knuth transformation (see  \textsection\ref{ss upper and lower canonical basis of H(W)} and  \textsection\ref{ss dual equivalence}).

One consequence of Theorem \ref{t nsH irreducibles two row case} is that the restriction
of a $ \field \nsH_{r,2}$-irreducible to $\field \nsH_{r-1,2}$ is multiplicity-free.
Thus each $\field \nsH_{r,2}$-irreducible  has a seminormal basis (in the sense of \cite{RamSeminormal}---see Definition \ref{d seminormal}) that is unique up to a diagonal transformation.
This can also be used to define a seminormal basis for any  $\field(\H_{r,2}\tsr\H_{r,2})$-irreducible.
Even though the irreducibles of  $\field \nsH_{r,2}$ are close to those of  $\field(\H_{r,2}\tsr\H_{r,2})$, the nonstandard Temperley-Lieb algebra offers something new:
the seminormal basis  of $M_\lambda\tsr M_\mu$ using the chain  $\field\nsH_{J_1} \subseteq \cdots \subseteq \field \nsH_{J_{r-1}} \subseteq \field\nsH_{J_r}$ is significantly different from the seminormal basis using the chain $\field(\H_{1,2} \tsr \H_{1,2}) \subseteq \cdots \subseteq \field (\H_{r-1,2} \tsr \H_{r-1,2}) \subseteq \field (\H_{r,2} \tsr \H_{r,2})$, where $\nsH_{J_k}$ is the subalgebra of $\nsH_{r,2}$ generated by $\sP_1, \dots, \sP_{k-1}$.

We are interested in these seminormal bases primarily as a tool for constructing a canonical basis of a $\field \nsH_{r,2}$-irreducible that is compatible with its decomposition into irreducibles at $\u =1$, as described in \cite[\textsection19]{BMSGCT4}.
Thus even though the representation theory of the nonstandard Hecke algebra alone is not enough to understand Kronecker coefficients,
there is hope that the seminormal bases will yield a better understanding of Kronecker coefficients.
In fact, \cite{canonical} gives a conjectural scheme for constructing a canonical basis of a $\field \nsH_{r,2}$-irreducible using its seminormal basis,
but this remains conjectural and we do not know how to use it to understand Kronecker coefficients.

This paper is organized as follows:
sections \ref{ss type A combinatorics preliminaries}--\ref{s the nonstandard Hecke algebra} are preparatory:  \textsection\ref{s canonical bases of hecke algebras} reviews the necessary facts about canonical bases of $\H_r$ and their behavior under projection onto $ \field \nsH_{r-1}$-irreducibles; \textsection\ref{s the nonstandard Hecke algebra} gives some basic results about the representation theory of $\nsH_r$.
Section \ref{s irreducibles of nshr2} contains the statement and proof of the main theorem.  Then in  \textsection\ref{s Seminormal bases}, seminormal bases of $\field \nsH_{r,2}$-irreducibles are defined, and in  \textsection\ref{s enumerative consequence}, the dimension of $\field \nsH_{r,2}$ is determined.

\section{Partitions and tableaux}
\label{ss type A combinatorics preliminaries}
A \emph{partition} $\lambda$ of  $r$ of length $\ell(\lambda) = l$ is a sequence $(\lambda_1, \ldots, \lambda_l)$ such that  $\lambda_1 \geq \cdots \geq \lambda_l > 0$ and $r = \sum_{i=1}^l \lambda_i$. The notation $\lambda \vdash r$ means that $\lambda$ is a partition of  $r$.
Let $\mathscr{P}_r$ denote the set of partitions of size $r$ and  $\mathscr{P}'_r$ the subset of $\mathscr{P}_r$ consisting of those partitions that are not a single row or column shape.  The symbols $\gd, \gdneq$ will denote dominance order on partitions.
The conjugate partition $\lambda'$ of a partition $\lambda$ is the partition whose diagram is the transpose of the diagram of $\lambda$.

The set of standard Young tableaux is denoted SYT and the subset of SYT of shape $\lambda$ is denoted SYT$(\lambda)$.
Tableaux are drawn in English notation, so that entries increase from north to south along columns and increase from west to east along rows. For a tableau $T$, $\sh(T)$ denotes the shape of $T$.

For a word $\mathbf{k} = k_1 k_2\dots k_r$,  $k_i \in \ZZ_{> 0}$, let $P(\mathbf{k}), Q(\mathbf{k})$ denote the insertion and recording tableaux produced by the Robinson-Schensted-Knuth (RSK) algorithm applied to  $\mathbf{k}$.
The notation $\transpose{Q}$ denotes the transpose of an SYT $Q$, so that $\sh(\transpose{Q}) = \sh(Q)'$.

Let $T$ be a tableau of shape $\lambda$. If $b$ is a square of the diagram of $\lambda$, then $T_b$ denotes the entry of $T$ in the square $b$. If $\nu \subseteq \lambda$, then $T_\nu$ denotes the subtableau of $T$ obtained by restricting $T$ to the diagram of $\nu$.

Let $\lambda$ and $\mu$ be partitions of $r$. Throughout this paper, $a_1, \dots, a_{k_\lambda}$ (resp. $b_1, \dots, b_{k_\mu}$) will denote the outer corners of the diagram of $\lambda$ (resp. $\mu$) labeled so that $a_{i+1}$ lies to the east of $a_i$ (resp. $b_{i+1}$ lies to the east of $b_i$), as in the following example.
\setlength{\cellsize}{2ex}
\be \label{e ai definition}
\begin{array}{c@{\hspace{1in}}c}
{\tiny \tableau{\ & \ & \ & \ & \ & \ & \ & \ & a_4 \\
 \ & \ & \ & \ & \ & \ & a_3 \\
 \ & \ & \ & \ \\
 \ & \ & \ & a_2 \\
 \ & a_1}}
 &
 {\tiny \tableau{\ & \ & \ & \ & \ & \ & \ & \ \\
 \ & \ & \ & \ & \ & \ & \ & b_2 \\
 \ & \ & \ & \ & \ \\
 \ & \ & \ & \ & b_1}} \\
 \lambda & \mu
\end{array}
\ee

\section{Canonical bases of the Hecke algebra $\H_r$}\label{s canonical bases of hecke algebras}
Here we recall the definition of the Kazhdan-Lusztig basis elements $C_w$ and  $\C_w$ and review the connection between cells in type $A$ and tableaux combinatorics,
following \cite{BProjected}.
We then discuss dual equivalence graphs and recall some results of \cite{BProjected} about projecting canonical bases, which will make these bases fairly easy to work with in the proof of Theorem \ref{t nsH irreducibles two row case}.

We work over the ground rings $\mathbf{A} = \ZZ[\u, \ui]$ and $\field = \QQ(\u)$. Define $\field_0$ (resp. $\field_\infty$) to be the subring of $\field$ consisting of rational functions with no pole at $\u = 0$ (resp. $\u = \infty$).

Let $\br{\cdot}$ be the involution of $\field$ determined by $\br{u} = \ui$; it restricts to an involution of $\mathbf{A}$.
For a nonnegative integer $k$, the $\br{\cdot}$-invariant quantum integer is $[k] := \frac{\u^k - \u^{-k}}{\u - \ui} \in \mathbf{A}$.
We also use the notation $[k]$ to denote the set $\{1,\ldots,k\}$, but these usages should be easy to distinguish from context.

\subsection{The Hecke algebra $\H(W)$}
\label{ss the Hecke algebra H(W)}
Let $(W, S)$ be a Coxeter group with length function $\ell$ and Bruhat order $<$. If $\ell(vw)=\ell(v)+\ell(w)$, then $vw = v\cdot w$ is a \emph{reduced factorization}. The \emph{right descent set} of $w \in W$ is $R(w) = \{s\in S : ws < w\}$.

For any $L\subseteq S$, the \emph{parabolic subgroup} $W_L$ is the subgroup of $W$ generated by $L$.

The \emph{Hecke algebra} $\H(W)$ of $(W, S)$ is the free $\mathbf{A}$-module with standard basis $\{T_w :\ w\in W\}$ and relations generated by
\be \label{e Hecke algebra def} \begin{array}{ll}T_vT_w = T_{vw} & \text{if } vw = v\cdot w\ \text{is a reduced factorization},\\
(T_s - \u)(T_s + \ui) = 0 & \text{if } s\in S.\end{array}\ee


\subsection{The upper and lower canonical basis of $\H(W)$}
\label{ss upper and lower canonical basis of H(W)}
The \emph{bar-involution}, $\br{\cdot}$, of $\H(W)$ is the additive map from $\H(W)$ to itself extending the $\br{\cdot}$-involution of $\mathbf{A}$ and satisfying $\br{T_w} = T_{w^{-1}}^{-1}$. Observe that $\br{T_{s}} = T_s^{-1} = T_s + \ui - u$ for $s \in S$. Some simple $\br{\cdot}$-invariant elements of $\H(W)$ are $\C_\text{id} := T_\text{id}$, $C_s := T_s - \u = T_s^{-1} - \ui$, and $\C_s := T_s + \ui = T_s^{-1} + u$, $s\in S$.

Define the lattices $(\H_r)_{\ZZ[\u]} := \ZZ[\u] \{ T_w : w \in W \}$ and $(\H_r)_{\ZZ[\ui]} := \ZZ[\ui] \{ T_w : w \in W \}$ of $\H_r$.
It is shown in \cite{KL} that
\be
\parbox{13cm}{for each $w \in W$, there is  a unique element $C_w \in \H(W)$ such that $\br{C_w} = C_w$ and $C_w$ is congruent to $T_w \mod \u (\H_r)_{\ZZ[\u]}$.}
\ee
The $\mathbf{A}$-basis $\Gamma_W := \{C_w: w\in W\}$
is the \emph{upper canonical basis} of $\H(W)$ (we use this language to be consistent with that for crystal bases).
Similarly,
\be
\parbox{13cm}{for each $w \in W$, there is  a unique element $\C_w \in \H(W)$ such that $\br{\C_w} = \C_w$ and $\C_w$ is congruent to $T_w \mod \ui (\H_r)_{\ZZ[\ui]}$.}
\ee
The $\mathbf{A}$-basis
$\Gamma'_W := \{\C_w : w \in W \}$ is the \emph{lower canonical basis} of $\H(W)$.

The coefficients of the lower canonical basis in terms of the standard basis are the \emph{Kazhdan-Lusztig polynomials} $P'_{x,w}$:
\be \C_w = \sum_{x \in W} P'_{x,w} T_x. \ee
(Our $P'_{x,w}$ are equal to $q^{(\ell(x)-\ell(w))/2}P_{x,w}$, where $P_{x,w}$ are the polynomials defined in \cite{KL} and $q^{1/2} = \u$.)
Now let $\mu(x,w) \in \ZZ$ be the coefficient of $\ui$ in $P'_{x,w}$ (resp. $P'_{w,x}$) if $x \leq w$ (resp. $w \leq x$).
Then the right regular representation in terms of the canonical bases of $\H_r$ takes the following simple forms:
\begin{equation}\label{e prime C on prime canbas}
\C_w \C_s =
\left\{\begin{array}{ll} [2] \C_w & \text{if}\ s \in R(w),\\
\displaystyle\sum_{\substack{\{w' \in W: s \in R(w')\}}} \mu(w',w)\C_{w'} & \text{if}\ s \notin R(w).
\end{array}\right.
\end{equation}
\begin{equation}\label{e C on canbas}
C_w C_s =
\left\{\begin{array}{ll} -[2] C_w & \text{if}\ s \in R(w),\\
\displaystyle\sum_{\substack{\{w' \in  W: s \in R(w')\}}} \mu(w',w)C_{w'} & \text{if}\ s \notin R(w).
\end{array}\right.
\end{equation}

The simplicity and sparsity of this action along with the fact that the right cells of $\Gamma_W$ and $\Gamma'_W$ often give rise to  $\CC(\u) \tsr_\mathbf{A} \H(W)$-irreducibles are among the most amazing and useful properties of canonical bases.

We will make use of the following positivity result due to Kazhdan-Lusztig and Beilinson-Bernstein-Deligne-Gabber (see, for instance, \cite{L2}).
\begin{theorem}
\label{t positive coefficients}
If $(W,S)$ is crystallographic, then the integers $\mu(x,w)$ are nonnegative.
\end{theorem}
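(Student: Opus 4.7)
The plan is to deduce nonnegativity of $\mu(x,w)$ from the much stronger geometric statement that every coefficient of the Kazhdan--Lusztig polynomial $P_{x,w}$ is a nonnegative integer, which in turn comes from identifying those coefficients with dimensions of intersection cohomology stalks of Schubert varieties. Since $(W,S)$ is crystallographic, I would first attach to it a (possibly infinite-dimensional) Kac--Moody group $G$ with Borel subgroup $B$, and form the associated flag ind-variety $\mathcal{F} = G/B$. This admits the Bruhat decomposition $\mathcal{F} = \bigsqcup_{w \in W} C_w$ where each Schubert cell $C_w = BwB/B$ is an affine space of dimension $\ell(w)$; the Schubert varieties $X_w = \overline{C_w}$ are projective of dimension $\ell(w)$ and in general singular.

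The central ingredient is the theorem of Kazhdan--Lusztig (established in finite type in their original work, and extended to general Kac--Moody type by Kashiwara--Tanisaki), whose proof rests on the decomposition theorem of Beilinson--Bernstein--Deligne--Gabber: for $x \leq w$ and any point $p \in C_x$,
\[
P_{x,w}(q) \;=\; \sum_{i \geq 0} \dim_{\CC} \mathcal{H}^{2i}\bigl(\mathrm{IC}(X_w)\bigr)_{p}\; q^{i},
\]
where $\mathrm{IC}(X_w)$ is the (suitably normalized) intersection cohomology complex of $X_w$, and where the odd stalk cohomology sheaves vanish by a standard parity argument. Since the right-hand side manifestly has nonnegative integer coefficients, so does $P_{x,w}$.

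To finish, I would translate this into the claim about $\mu(x,w)$. From $P'_{x,w} = \u^{\ell(x)-\ell(w)} P_{x,w}(\u^2)$ and the definition of $\mu(x,w)$ as the coefficient of $\ui$ in $P'_{x,w}$ (for $x \leq w$, with the symmetric convention otherwise), a short unwinding identifies $\mu(x,w)$ with the coefficient of $q^{(\ell(w)-\ell(x)-1)/2}$ in $P_{x,w}(q)$. By the previous paragraph this coefficient equals $\dim_{\CC} \mathcal{H}^{\ell(w)-\ell(x)-1}\bigl(\mathrm{IC}(X_w)\bigr)_{p}$, which is a nonnegative integer.

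The main obstacle is ensuring that the geometric machinery is actually available. In finite type everything is classical. For infinite crystallographic $W$ one must work in the Kashiwara--Tanisaki framework of flag ind-schemes and verify that the decomposition theorem and the stalk identification with KL-polynomials transfer to that setting; all of this is available in the literature, and I would cite it rather than reprove it.
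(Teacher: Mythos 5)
The paper does not give a proof of this statement; it is quoted as a known theorem of Kazhdan--Lusztig and Beilinson--Bernstein--Deligne--Gabber with a reference to the literature (\cite{L2}). Your proposal is the correct standard argument behind that citation: the identification of the coefficients of $P_{x,w}$ with stalk intersection cohomology dimensions of Schubert varieties in the Kac--Moody flag variety (via Kashiwara--Tanisaki in the general crystallographic case), together with the degree bookkeeping correctly showing that $\mu(x,w)$ is the coefficient of $q^{(\ell(w)-\ell(x)-1)/2}$ in $P_{x,w}$.
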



\subsection{Cells}
\label{ss cells}
We define cells in the general setting of modules with basis, as in \cite{BProjected} (this is similar to the notion of cells of Coxeter groups from \cite{KL}).

Let  $H$ be an $R$-algebra for some commutative ring $R$.
Let $M$ be a right $H$-module and $\Gamma$ an  $R$-basis of  $M$. The preorder $\klo{\Gamma}$ (also denoted $\klo{M}$) on the vertex set $\Gamma$ is generated by the relations
\be
\label{e preorder}
\delta\klocov{\Gamma}\gamma \begin{array}{c}\text{if there is an $h\in H$ such that $\delta$ appears with nonzero}\\ \text{coefficient in the expansion of $\gamma h$
in the basis $\Gamma$}. \end{array}
\ee

Equivalence classes of $\klo{\Gamma}$ are the \emph{right cells} of $(M, \Gamma)$. The preorder $\klo{M}$ induces a partial order on the right cells of $M$, which is also denoted $\klo{M}$.
We say that the right cells  $\Lambda$ and  $\Lambda'$ are isomorphic if $(R \Lambda, \Lambda)$ and $(R \Lambda', \Lambda')$ are isomorphic as modules with basis.
Sometimes we speak of the right cells of $M$ or right cells of $\Gamma$ if the pair $(M, \Gamma)$ is clear from context.
We also use the terminology \emph{right $H$-cells} when we want to make it clear that the algebra $H$ is acting.

\subsection{Cells and tableaux} \label{ss cell label conventions C_Q C'_Q}
Let  $\H_r = \H(\S_r)$ be the type $A$ Hecke algebra.  For the remainder of the paper, set $S := \{s_1,\ldots,s_{r-1}\}$ and  $J := \{s_1,\ldots,s_{r-2}\}$.

It is well known that $\field \H_r := \field \tsr_\mathbf{A} \H_r$ is semisimple and its irreducibles in bijection with partitions of $r$; let $M_\lambda$ and $M_\lambda^{\mathbf{A}}$ be the $\field\H_r$-irreducible and Specht module of $\H_r$ of shape $\lambda \vdash r$ (hence $M_\lambda \cong \field\tsr_\mathbf{A} M_\lambda^\mathbf{A}$). For any $\field \H_r$-module $N$ and partition $\lambda$ of $r$, let  $p_{M_\lambda} : N \to N$ be the $\field \H_r$-module projector with image the $M_\lambda$-isotypic component of $N$.

The work of Kazhdan and Lusztig \cite{KL} shows that the decomposition of $\Gamma_{\S_r}$ into right cells is
$\Gamma_{\S_r} = \bigsqcup_{\lambda \vdash r,\, P \in \text{SYT}(\lambda)} \Gamma_P$, where $\Gamma_P := \{C_w : P(w) = P\}$.  Moreover, the right cells $\{ \Gamma_P : \sh(P) = \lambda\}$ are all isomorphic, and, denoting any of these cells by $\Gamma_\lambda$, $\mathbf{A}\Gamma_\lambda \cong M_\lambda^\mathbf{A}$. Similarly, the decomposition of $\Gamma'_{\S_r}$ into right cells is
$\Gamma'_{\S_r} = \bigsqcup_{\lambda \vdash r,\, P \in \text{SYT}(\lambda)} \Gamma'_P$, where $\Gamma'_P := \{\C_w : \transpose{P(w)} = P\}$.  Moreover, the right cells $\{ \Gamma'_P : \sh(P) = \lambda\}$ are all isomorphic, and, denoting any of these cells by $\Gamma'_\lambda$, $\mathbf{A}\Gamma'_\lambda \cong M_\lambda^\mathbf{A}$.
A combinatorial discussion of left cells in type $A$ is given in \cite[\textsection 4]{B0}.

We refer to the basis $\Gamma_\lambda$ of $M_\lambda^\mathbf{A}$ as the \emph{upper canonical basis of $M_\lambda$} and denote it by $\{ C_Q : Q \in \text{SYT}(\lambda) \}$, where $C_Q$ corresponds to $C_w$ for any (every) $w \in \S_r$ with recording tableau $Q$. Similarly, the basis $\Gamma'_\lambda$ of $M_\lambda^\mathbf{A}$ is the \emph{lower canonical basis of $M_\lambda$}, denoted $\{ \C_Q : Q \in \text{SYT}(\lambda) \}$, where $\C_Q$ corresponds to $\C_w$ for any (every) $w \in \S_r$ with recording tableau $\transpose{Q}$. Note that with these labels the action of $C_s$ on the upper canonical basis of $M_\lambda$ is similar to \eqref{e C on canbas}, with  $\mu(Q',Q):= \mu(w',w)$ for any  $w',w$ such that  $P(w')=P(w)$, $Q' = Q(w'),Q = Q(w)$, and right descent sets
\be
R(C_Q) = \{ s_i : i + 1 \text{ is strictly to the south of $i$ in $Q$}\}.
\ee
Similarly, the action of $\C_s$ on $\{ \C_Q : Q \in \text{SYT}(\lambda) \}$ is similar to \eqref{e prime C on prime canbas}, with  $\mu(Q',Q):= \mu(w',w)$ for any  $w',w$ such that  $\transpose{P(w')}=\transpose{P(w)}$, $Q' = \transpose{Q(w')},Q = \transpose{Q(w)}$, and right descent sets
\be
R(\C_Q) = \{ s_i : i + 1 \text{ is strictly to the east of $i$ in $Q$}\}.
\ee
See Figure \ref{f Wgraph} for a picture of $\Gamma_{(3,2)}$ and  $\Gamma'_{(3,2)}$ and right descent sets.

\subsection{Dual equivalence graphs}
\label{ss dual equivalence}
To work with canonical bases in the proof of Theorem \ref{t nsH irreducibles two row case}, we make use the notion of dual equivalence graphs\footnote{We use a slightly simplified version of the dual equivalence graphs from \cite{Sami}.} from \cite{Sami}.
Given $T, T' \in \text{SYT}(\lambda)$, we say that $T$ and $T'$ are related by a \emph{dual Knuth transformation} at $i$ if
\begin{list}{(\arabic{ctr})} {\usecounter{ctr} \setlength{\itemsep}{1pt} \setlength{\topsep}{2pt}}
\item $|R(\C_T) \cap \{ s_{i - 1}, s_i \}| = |R(\C_{T'}) \cap \{ s_{i - 1}, s_i \}| = 1,$
\item $T'$ is obtained from $T$ by swapping the entries $i$ and $i+1$ in $T$ or by swapping the entries $i-1$ and $i$ in $T$.
\end{list}
If $T$ and $T'$ are related by a dual Knuth transformation at $i$, then we also say that there is a \emph{$\text{DKT}_i$-edge} between $T$ and $T'$ and write $T \dkt{i} T'$. We write $T \dkt{} T'$ if $T \dkt{i} T'$ for some $i$, $2 \leq i \leq r-1$.

Define the \emph{dual equivalence graph (DE graph)} on $\text{SYT}(\lambda)$ to be the graph with vertex set $\text{SYT}(\lambda)$ and edges given by the $\text{DKT}_i$-edges for all $i$, $2 \leq i \leq r-1$.

We will freely use the result from \cite{KL} that $T \dkt{} T'$ implies $\mu(T',T)= \mu(T,T') = 1$.
Note that this means that the $\S_r$-graph on $\Gamma'_\lambda$ (or $\Gamma_\lambda$) contains the underlying simple graph of the DE graph on $\text{SYT}(\lambda)$ (compare Figures \ref{f Wgraph} and \ref{f DE graph}).

\setlength{\cellsize}{2.2ex}
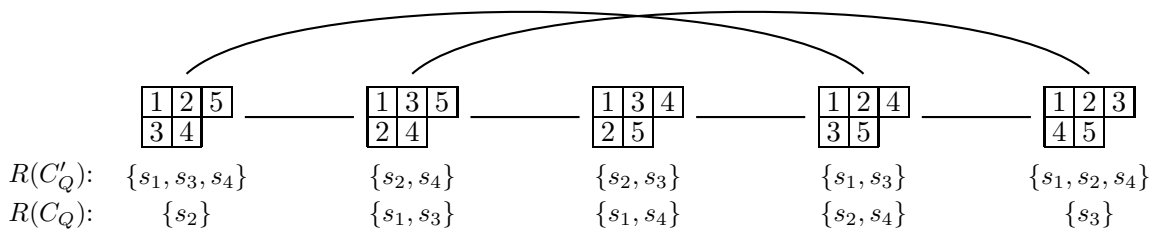
\begin{figure}
\begin{tikzpicture}[xscale = 3]
\tikzstyle{vertex}=[inner sep=0pt, outer sep=5pt, fill = white]
\tikzstyle{edge} = [draw, thick, -,black]
\tikzstyle{LabelStyleB} = [text=black, anchor=north]
\tikzstyle{LabelStyleA} = [text=black, anchor=south]

\node[vertex] (h1) at (4,0) {${\small\tableau{1&2&3 \\ 4&5}}$};
\node[vertex] (h2) at (3,0) {${\small\tableau{1&2&4 \\ 3&5}}$};
\node[vertex] (h3) at (2,0) {${\small\tableau{1&3&4 \\ 2&5}}$};
\node[vertex] (h4) at (1,0) {${\small\tableau{1&3&5 \\ 2&4}}$};
\node[vertex] (h5) at (0,0) {${\small\tableau{1&2&5 \\ 3&4}}$};

\node[vertex] (ll) at (-.6,-.8) {{\footnotesize$ R(\C_Q) $:}};
\node[vertex] (l1) at (4,-.8) {{\footnotesize$\{s_1,s_2,s_4\} $}};
\node[vertex] (l2) at (3,-.8) {{\footnotesize$ \{s_1,s_3\} $}};
\node[vertex] (l3) at (2,-.8) {{\footnotesize$ \{s_2,s_3\} $}};
\node[vertex] (l4) at (1,-.8) {{\footnotesize$ \{s_2,s_4\} $}};
\node[vertex] (l5) at (0,-.8) {{\footnotesize$ \{s_1,s_3,s_4\} $}};

\node[vertex] (ss) at (-.6, -1.3) {{\footnotesize$ R(C_Q) $:}};
\node[vertex] (s1) at (4, -1.3) {{\footnotesize$ \{s_3\} $}};
\node[vertex] (s2) at (3, -1.3) {{\footnotesize$ \{s_2, s_4\} $}};
\node[vertex] (s3) at (2, -1.3) {{\footnotesize$ \{s_1,s_4\} $}};
\node[vertex] (s4) at (1, -1.3) {{\footnotesize$ \{s_1,s_3\} $}};
\node[vertex] (s5) at (0, -1.3) {{\footnotesize$ \{s_2\} $}};

\draw[edge] (h1) to (h2);
\draw[edge] (h2) to (h3);
\draw[edge] (h3) to (h4);
\draw[edge] (h4) to (h5);
\draw[edge, bend right=70] (h1.90) to (h4.90);
\draw[edge, bend right=70] (h2.90) to (h5.90);

\end{tikzpicture}
\caption{The $\S_r$-graph on $\Gamma'_\lambda$ and $\Gamma_\lambda$. The presence (resp. absence) of an edge means that $\mu(Q',Q) = \mu(Q,Q')$ is 1 (resp. 0).}
\label{f Wgraph}
\end{figure}

\begin{figure}
\begin{tikzpicture}[xscale = 3]
\tikzstyle{vertex}=[inner sep=0pt, outer sep=5pt, fill = white]
\tikzstyle{edge} = [draw, thick, -,black]
\tikzstyle{LabelStyleB} = [text=black, anchor=north]
\tikzstyle{LabelStyleA} = [text=black, anchor=south]

\node[vertex] (h1) at (4,0) {${\small\tableau{1&2&3 \\ 4&5}}$};
\node[vertex] (h2) at (3,0) {${\small\tableau{1&2&4 \\ 3&5}}$};
\node[vertex] (h3) at (2,0) {${\small\tableau{1&3&4 \\ 2&5}}$};
\node[vertex] (h4) at (1,0) {${\small\tableau{1&3&5 \\ 2&4}}$};
\node[vertex] (h5) at (0,0) {${\small\tableau{1&2&5 \\ 3&4}}$};

\draw[edge] (h1.185) to node[LabelStyleB]{\small 3} (h2.-5);
\draw[edge] (h1.-185) to node[LabelStyleA]{\small 4} (h2.5);
\draw[edge] (h2) to node[LabelStyleB]{\small 2} (h3);
\draw[edge] (h3) to node[LabelStyleB]{\small 4} (h4);
\draw[edge] (h4.185) to node[LabelStyleB]{\small 2} (h5.-5);
\draw[edge] (h4.-185) to node[LabelStyleA]{\small 3} (h5.5);

\end{tikzpicture}
\caption{The DE graph on $\text{SYT}((3,2))$.}
\label{f DE graph}
\end{figure}
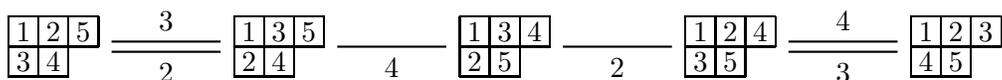

It is easy to see that (with the help of Figure \ref{f dkt example})
\be \label{e DKE complete graph}
\parbox{14cm}{for any distinct $i, j \in [k_\lambda]$, there exists at least one edge $T \dkt{r-1} T'$ in the DE graph on $\text{SYT}(\lambda)$ with $T_{a_i} = r = T'_{a_j}$ and $T_{a_j} = r-1 = T'_{a_i}$.}
\ee
Here $T_{a_i}$ denotes the entry of $T$ in the square $a_i$ (see \textsection\ref{ss type A combinatorics preliminaries}).

\begin{figure}[H]
\begin{tikzpicture}[xscale = 8.1, yscale = 2]
\tikzstyle{vertex}=[inner sep=0pt, outer sep=5pt, fill = white]
\tikzstyle{edge} = [draw, thick, -,black]
\tikzstyle{LabelStyleB} = [text=black, anchor=north]
\tikzstyle{LabelStyleA} = [text=black, anchor=south]

\breedte=17pt;
\node[vertex] (d1) at (0,0) {
\tiny \Yboxdim20pt \begin{Young}\ & \ & \ & \ & \ & \ & \ & \ & \ \cr
 \ & \ & \ & \ & \ & \ & \ \cr
 \ & \ & \ & \ \cr
 \ & \ & $r \ngb 2$ & $r\ngb1$ \cr
 \ & $r$ \cr
 \end{Young}};
\node[vertex] (d2) at (1,0){
\tiny \begin{Young}\ & \ & \ & \ & \ & \ & \ & \ & \ \cr
 \ & \ & \ & \ & \ & \ & \ \cr
 \ & \ & \ & \ \cr
 \ & \ & $r\ngb2$ & $r$ \cr
 \ & $r \ngb 1$ \cr
 \end{Young}};
\node[vertex] (label1) at (0,-1) {$T$};
\node[vertex] (label2) at (1,-1) {$T'$};

\draw[edge] (d1) to node[LabelStyleB]{\tiny $r-1$} (d2);

\end{tikzpicture}
\caption{An edge of the DE graph on $\text{SYT}(\lambda)$ as in \eqref{e DKE complete graph} for $i = 1, j = 2$.}
\label{f dkt example}
\end{figure}
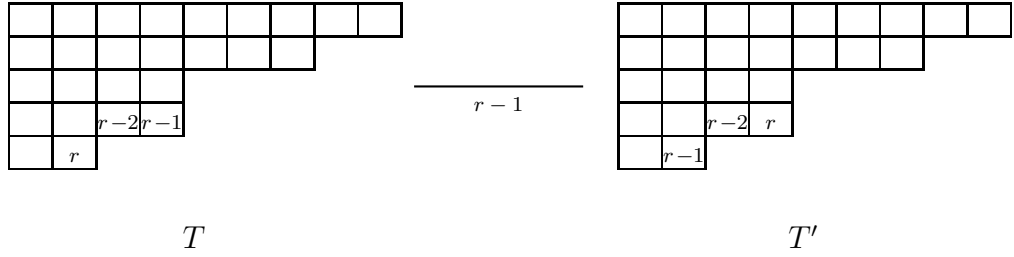

\subsection{Projected canonical bases}
\label{ss lifts}
Here we recall some results from \cite{BProjected} about projecting the upper and lower canonical bases of $M_\lambda$ onto the $\field \H_{r-1}$-irreducible isotypic components of  $M_\lambda$. These results will make it fairly easy to work with these bases in the proof of Theorem \ref{t nsH irreducibles two row case}.

For any $L \subseteq S$, define $(\liftCup_Q)^L$ to be the projection of $C_Q$ onto the irreducible $\field \H_L$-module corresponding to the right cell of $\Res_{\field \H_L} \field\Gamma_{\lambda}$ containing $C_Q$, where $\H_L$ denotes the parabolic sub-Hecke algebra of $\H_r$ with $\mathbf{A}$-basis $\{T_w:\ w\in (S_r)_L\}$.
Define $(\liftCp_Q)^L$ similarly.  If $L = J := \{s_1, \dots, s_{r-2} \}$, then by \cite[\textsection 4]{B0},  $(\liftCup_Q)^J$ (resp.  $(\liftCp_Q)^J$) is equal to $p_{M_\mu}(C_Q)$ (resp. $p_{M_\mu}(\C_Q)$), where $\mu = \sh(Q|_{[r-1]})$ and $p_{M_\mu}$ is defined in \textsection\ref{ss cell label conventions C_Q C'_Q}. Here, for a tableau  $Q$ and set $Z \subseteq \ZZ$, $Q|_Z$ denotes the subtableau of $Q$ obtained by removing the entries not in $Z$.

Maintain the notation  of \eqref{e ai definition} for the outer corners of $\lambda$.
Define a partial order $\ldneq_r$ on SYT$(\lambda)$ by declaring $Q' \ldneq_r Q$ whenever $\sh(Q'|_{[r-1]}) \gdneq \sh(Q|_{[r-1]})$.
Recall that $\field_0$ (resp. $\field_\infty$) is the subring of $\field$ consisting of rational functions with no pole at $\u = 0$ (resp. $\u = \infty$).

\begin{lemma}[\cite{BProjected}\footnote{Lemma 7.4 of \cite{BProjected} uses a different partial order, but the proof given for this lemma also works for the partial order $\ldneq_r$ defined here.}] \label{l lift transition matrix}
The transition matrix expressing the projected basis $\{(\liftCup_Q)^J : Q \in \text{SYT}(\lambda) \}$ in terms of the upper canonical basis of $M_\lambda$ is lower-unitriangular and is the identity at  $\u = 0$ and  $\u = \infty$ (i.e. $(\liftCup_Q)^J = C_Q + \sum_{Q' \gdneq_r Q} m_{Q' Q} C_{Q'}$, $m_{Q' Q} \in \u\field_0 \cap \ui \field_\infty$). The transition matrix expressing the projected basis $\{(\liftCp_Q)^J : Q \in \text{SYT}(\lambda) \}$ in terms of the lower canonical basis of $M_\lambda$ satisfies the same properties except is upper-unitriangular instead of lower-unitriangular (i.e. $(\liftCp_Q)^J = \C_Q + \sum_{Q' \ldneq_r  Q} m'_{Q' Q} \C_{Q'}$ $m'_{Q' Q} \in \u\field_0 \cap \ui \field_\infty$).
\end{lemma}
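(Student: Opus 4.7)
The plan is to combine a Kazhdan--Lusztig cell filtration argument (giving the triangularity) with an analysis at the two boundary specializations $\u=0$ and $\u=\infty$ (giving $m_{Q'Q} \in \u\field_0 \cap \ui\field_\infty$). I sketch the upper canonical basis case; the lower case is symmetric with dominance reversed.

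First, the $\H_J$-cells of $\Res_{\H_J}\Gamma_\lambda$ are indexed by partitions $\mu \vdash r-1$ obtained by removing an outer corner of $\lambda$, with $C_Q$ lying in the cell $\mu_Q := \sh(Q|_{[r-1]})$. For the upper canonical basis, the KL preorder on these cells agrees with dominance order on their shapes: $C_\text{id}$ is the KL-maximum of $\Gamma_{\S_r}$ (since $C_\text{id} \cdot h = h$ produces every basis element) and corresponds to the dominance-maximum shape. Hence
\[
F_{\mu_0} := \mathbf{A}\{C_Q : \mu_Q \ld \mu_0\}
\]
is an $\H_J$-invariant filtration of $M_\lambda^\mathbf{A}$ whose successive quotients $F_{\mu_0}/F_{<\mu_0}$ are isomorphic, as $\H_J$-modules with basis, to $M^\mathbf{A}_{\mu_0}$ carrying its upper canonical basis. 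Over $\field$ the filtration splits, $\Res_{\field\H_J}M_\lambda \cong \bigoplus_\mu M_\mu$, and $(\liftCup_Q)^J = p_{M_{\mu_Q}}(C_Q)$ is the unique element of $M_{\mu_Q}$ congruent to $C_Q$ modulo $F_{<\mu_Q}$. Since $F_{<\mu_Q}$ is $\mathbf{A}$-spanned by $\{C_{Q'} : \mu_{Q'} \ldneq \mu_Q\} = \{C_{Q'} : Q' \gdneq_r Q\}$, the claimed lower-unitriangular expansion follows.

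The main obstacle is showing $m_{Q'Q} \in \u\field_0 \cap \ui\field_\infty$. The approach is to show that the filtration $F_\bullet$ splits after reduction modulo $\u$ over $\field_0$ (and modulo $\ui$ over $\field_\infty$); this forces the central idempotent $e_{\mu_Q} \in \field\H_J$ to act on $C_Q$ as the identity modulo $\u(M_\lambda^\mathbf{A})_{\ZZ[\u]}$ (resp.\ $\ui(M_\lambda^\mathbf{A})_{\ZZ[\ui]}$), which yields the claim. This splitting is a reflection of the rigidity of the KL cell structure at the two boundary specializations. Concretely, the structure constants $\mu(Q',Q)$ appearing in \eqref{e C on canbas} are integers independent of $\u$, so the denominators arising in the expansion of $e_{\mu_Q}$ in the KL basis are products of quantum integers $[k]$, and the identity
\[
\tfrac{1}{[k]} = \u^{k-1}/(1+\u^2+\cdots+\u^{2k-2})
\]
shows $1/[k] \in \u\field_0 \cap \ui\field_\infty$; a careful bookkeeping (as in \cite{BProjected}) propagates this to the coefficients $m_{Q'Q}$. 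The lower-basis statement then follows by an identical argument, noting that the KL preorder on lower-basis cells is opposite to dominance (since $\C_\text{id}$ corresponds to shape $(1^r)$ and is KL-maximum), so the filtration is indexed by $\gd$ rather than $\ld$ and corrections appear for $Q' \ldneq_r Q$, giving an upper-unitriangular transition matrix.
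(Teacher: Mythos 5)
The paper does not prove this lemma; it cites it from \cite{BProjected}, with a footnote noting only that the proof there adapts to the partial order $\ldneq_r$ used here. So there is no in-paper argument to compare against, and the proposal has to be judged on its own merits.

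Your outline of the triangularity is the right shape: identify the $\H_J$-cell filtration of $\Res_{\H_J}\Gamma_\lambda$, note that $(\liftCup_Q)^J$ is the unique element of $M_{\mu_Q}$ congruent to $C_Q$ modulo the sub-filtration, and deduce the unitriangular expansion. (You should be a bit more careful, though: your justification of the cell order on $\Res_{\H_J}\Gamma_\lambda$ relies on a statement about $C_{\mathrm{id}}$ being KL-maximal in $\Gamma_{\S_r}$ as an $\H_r$-module, which is not the same thing as the order on $\H_J$-cells of $\Res_{\H_J}\Gamma_\lambda$; what is actually needed is the comparison of $\H_J$-cell order with dominance on $\sh(Q|_{[r-1]})$, which in the paper appears as Corollary \ref{c geck relative a invariant} \emph{after} Lemma \ref{l lift transition matrix}, so you cannot invoke it without independent argument.)

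The real gap is in the second half. You reduce the coefficient claim $m_{Q'Q}\in\u\field_0\cap\ui\field_\infty$ to the assertion that ``the filtration $F_\bullet$ splits after reduction modulo $\u$ over $\field_0$ (and modulo $\ui$ over $\field_\infty$),'' but this splitting \emph{is} the content of the lemma, and you give no argument for it beyond the slogan about ``rigidity of the KL cell structure.'' The observation that $1/[k]\in\u\field_0\cap\ui\field_\infty$ is necessary but far from sufficient: the central idempotent $e_{\mu_Q}\in\field\H_J$ expanded in the KL or standard basis has numerators (Hecke-algebra character values, or products of structure constants arising from iterated applications of \eqref{e C on canbas}) that do not obviously compensate the denominators so as to force vanishing of the cross-terms at both $\u=0$ and $\u=\infty$; in fact $\H_J|_{\u=0}$ is the non-semisimple $0$-Hecke algebra, so the idempotents have no naive specialization and the ``splitting mod $\u$'' claim requires genuine work. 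Writing ``a careful bookkeeping (as in \cite{BProjected}) propagates this'' defers exactly the step that makes the lemma nontrivial; as it stands the proposal is an annotated citation rather than a proof.
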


By \cite[\textsection4]{B0}, the $\H_J$-module with basis $(\Res_{\H_J} M_\lambda, \Gamma'_\lambda)$ decomposes into right cells as
\[\Gamma'_\lambda = \bigsqcup_{i \in [k_\lambda]} \{\C_Q : \sh(Q|_{[r-1]}) = \lambda - a_i\},\]
and moreover, $\{\C_Q : \sh(Q|_{[r-1]}) = \lambda - a_i\} \xrightarrow{\cong}  \Gamma'_{\lambda-a_i}, \ \C_Q \mapsto \C_{Q|_{[r-1]}}$ is an isomorphism of right  $\H_J$-cells.
\begin{corollary}\label{c geck relative a invariant}
Let $\klo{\Res_{J} \Gamma'_\lambda}$ be the partial order on the right cells of the $\H_J$-module with basis $(\Res_{\H_J} M_\lambda, \Gamma'_\lambda)$. This partial order is a total order with $\Gamma'_{\lambda - a_i} \klo{\Res_{J}\Gamma'_\lambda} \Gamma'_{\lambda - a_j}$ exactly when $i \leq j$. Similarly, $(\Res_{\H_J} M_\lambda, \Gamma_\lambda)$ has a right cell isomorphic to  $\Gamma_{\lambda-a_i}$ for each  $i \in [k_\lambda]$ and the partial order $\klo{\Res_{J}\Gamma_\lambda}$ on right cells is a total order with $\Gamma_{\lambda - a_i} \klo{\Res_{J}\Gamma_\lambda} \Gamma_{\lambda - a_j}$ exactly when $i \geq j$.
\end{corollary}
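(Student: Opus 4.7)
The plan is to exhibit, for each pair $i<j$ in $[k_\lambda]$, a single dual equivalence edge that witnesses $\Gamma'_{\lambda-a_i}\klo{\Res_J \Gamma'_\lambda}\Gamma'_{\lambda-a_j}$, and then to deduce the total-order conclusion from the fact that the cells listed in the excerpt are a priori distinct. The upper-basis case will follow from an entirely parallel argument applied to the same edge, with the roles of ``east'' and ``south'' swapped so that the order is reversed.

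Setup: the labeling convention in \eqref{e ai definition} (with $a_1$ the south-west outer corner and $a_{k_\lambda}$ the north-east one) immediately produces the dominance chain $\lambda-a_1\gdneq\lambda-a_2\gdneq\cdots\gdneq\lambda-a_{k_\lambda}$, so the statement merely asserts that the preorder on cells matches this chain (reversed, in the upper-basis case).

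Core step: fix $i<j$ and, by \eqref{e DKE complete graph}, pick a DKT-$(r-1)$ edge $T\dkt{r-1}T'$ with $T_{a_i}=r=T'_{a_j}$ and $T_{a_j}=r-1=T'_{a_i}$. Through the cell isomorphism $\C_T\mapsto \C_{T|_{[r-1]}}$, the element $\C_T$ lies in $\Gamma'_{\lambda-a_i}$ and $\C_{T'}$ in $\Gamma'_{\lambda-a_j}$. Since $a_i$ is strictly south-west of $a_j$, in $T$ the entry $r$ is strictly west of $r-1$, so $s_{r-1}\notin R(\C_T)$; the DKT condition $|R(\C_T)\cap\{s_{r-2},s_{r-1}\}|=1$ then forces $s_{r-2}\in R(\C_T)$. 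Symmetrically $s_{r-2}\notin R(\C_{T'})$. Now $s_{r-2}\in J$, and the DE graph fact recalled in \textsection\ref{ss dual equivalence} gives $\mu(T,T')=1$; applying \eqref{e prime C on prime canbas} to $\C_{T'}\C_{s_{r-2}}$ therefore expresses $\C_T$ with nonzero coefficient, which is exactly the relation $\C_T\klocov{\Res_J \Gamma'_\lambda}\C_{T'}$. This establishes $\Gamma'_{\lambda-a_i}\klo{\Res_J \Gamma'_\lambda}\Gamma'_{\lambda-a_j}$. Since the cells are pairwise distinct (being the parts of the decomposition stated just before the corollary), no two can be mutually related, so the chain produced as $i<j$ varies is automatically a total order with the claimed direction.

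The upper-basis statement is proved the same way, using the same DKT edge but now acting on $C_T$ by $C_{s_{r-2}}$ and using the descent convention $R(C_Q)=\{s_k:k+1\text{ south of }k\}$. The main (mild) obstacle is showing that with this convention one still has $s_{r-2}\in R(C_{T'})\setminus R(C_T)$. This is a short SYT argument: from the DKT condition, the common position $c$ of $r-2$ lies in a column $\geq$ that of $a_i$ and strictly $<$ that of $a_j$; if additionally $c$ were strictly north of $a_j$, then the cell $(\text{row}(c),\text{col}(a_j))$ would belong to $\lambda$ and house an entry strictly greater than $r-2$, yet different from the only candidates $r-1,r$, which are already placed at $a_j$ and $a_i$---a contradiction. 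Hence $\text{row}(a_j)\leq\text{row}(c)<\text{row}(a_i)$, which gives $s_{r-2}\in R(C_{T'})\setminus R(C_T)$. Then \eqref{e C on canbas} and $\mu(T',T)=1$ place $C_{T'}$ in $C_TC_{s_{r-2}}$ with nonzero coefficient, giving $\Gamma_{\lambda-a_j}\klo{\Res_J \Gamma_\lambda}\Gamma_{\lambda-a_i}$ for $i<j$, equivalent to the stated reversed total order.
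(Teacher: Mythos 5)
Your proof is correct in substance, and the core idea — using the $\text{DKT}_{r-1}$ edges from \eqref{e DKE complete graph} to produce a single step of the preorder in each cell-to-cell transition — is the same as the paper's. The interesting difference is in the ``only if'' direction. The paper gets $\Gamma'_{\lambda-a_i}\klo{\Res_J\Gamma'_\lambda}\Gamma'_{\lambda-a_j}\Rightarrow i\leq j$ from the unitriangularity of the transition matrix in Lemma \ref{l lift transition matrix}, which encodes a genuine analytic fact about the projected bases. You instead observe that once the cells are known to be distinct (as the parts of the decomposition quoted just before the corollary) and a total chain of relations $\Gamma'_{\lambda-a_1}\klo{}\cdots\klo{}\Gamma'_{\lambda-a_{k_\lambda}}$ has been exhibited, antisymmetry of the partial order on cells forces the chain to be exactly the partial order. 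This is a clean shortcut: it eliminates the dependence on Lemma \ref{l lift transition matrix} entirely for this corollary. The tradeoff is that the paper's route yields the ``only if'' direction directly from a structural property of the bases, which is reused elsewhere (e.g.\ in the projection lemma), whereas your argument extracts it ``for free'' from what has already been proved.

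Two small remarks. First, for the upper-basis claim you do more than the paper's terse proof, and rightly so: the paper's phrase ``the $\text{DKT}_i$-edges suffice'' implicitly requires $s_{r-2}\in R(C_{T'})\setminus R(C_T)$, which is not the defining DKT condition (that one is stated in terms of $R(\C_\cdot)$). Your tableau argument for this is the right move, but it is incomplete as written: you prove $\text{row}(a_j)\leq\text{row}(c)$ and then assert the chain $\text{row}(a_j)\leq\text{row}(c)<\text{row}(a_i)$ without justifying the second inequality. The fix is short: from $\text{col}(a_i)\leq\text{col}(c)$ and the fact that $a_i$ is an outer corner of $\lambda$, if $\text{row}(c)\geq\text{row}(a_i)$ then $c$ would be a cell of $\lambda$ weakly southeast of $a_i$ and distinct from $a_i$, which is impossible. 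Second, you should note explicitly that the cells of $(\Res_{\H_J}M_\lambda,\Gamma_\lambda)$ are indexed the same way as for $\Gamma'_\lambda$ (i.e.\ by $\sh(Q|_{[r-1]})$); the corollary asserts this and your proof uses it, so it is worth a sentence citing the same source as for $\Gamma'_\lambda$.
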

\begin{proof}
Lemma \ref{l lift transition matrix} shows that $\Gamma'_{\lambda - a_i} \klo{\Res_{J}\Gamma'_\lambda} \Gamma'_{\lambda - a_j}$ implies $i \leq j$.  To prove the converse, it suffices to show the existence of certain nonzero $\mu(Q',Q)$.  The $\text{DKT}_i$-edges from \eqref{e DKE complete graph} suffice.
\end{proof}

We will also need the following theorem, one of the main results of \cite{BProjected}.
\begin{theorem}[\cite{BProjected}]
\label{t transition C' to C}
The transition matrix expressing the lower canonical basis $\{ \C_Q : Q \in \text{SYT}(\lambda) \}$ of $M_\lambda$ in terms of the upper canonical basis $\{ C_Q : Q \in \text{SYT}(\lambda) \}$ of $M_\lambda$ has entries belonging to $\field_0 \cap \field_\infty$ and is the identity matrix at $\u = 0$ and $\u = \infty$.
\end{theorem}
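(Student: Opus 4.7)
The plan is to induct on $r = |\lambda|$ and to reduce the claim to an analogous statement inside the $\H_J$-isotypic components of $\Res_{\H_J} M_\lambda$. The base case $r = 1$ is immediate. For $r \geq 2$, fix $\lambda \vdash r$ and $Q \in \text{SYT}(\lambda)$, and let $a_i$ be the corner of $\lambda$ containing the entry $r$ of $Q$; set $\mu := \lambda - a_i$.

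The first step is to invert Lemma \ref{l lift transition matrix}. That lemma expresses each $(\liftCup_Q)^J$ (resp.\ $(\liftCp_Q)^J$) in terms of $\{C_{Q'}\}$ (resp.\ $\{\C_{Q'}\}$) via a matrix that is unitriangular with respect to $\gdneq_r$ (resp.\ $\ldneq_r$), with off-diagonal entries in $u\field_0 \cap \ui\field_\infty$, and inversion preserves these properties. Hence, up to correction terms whose coefficients vanish at both $u=0$ and $u=\infty$, proving the theorem reduces to expanding each $(\liftCp_{Q_T})^J$ in the basis $\{(\liftCup_{Q_S})^J : S\in\text{SYT}(\mu)\}$ of a single isotypic component $M_\lambda^{(\mu)} \subseteq \Res_{\H_J}M_\lambda$, with coefficients in $\field_0 \cap \field_\infty$ that are the identity at $u=0,\infty$.

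For this expansion I would use the cell isomorphisms recalled in \textsection\ref{ss lifts} (cf.\ \cite{B0}): both $\{(\liftCup_{Q_T})^J\}$ and $\{(\liftCp_{Q_T})^J\}$ are images of the upper and lower canonical bases of $M_\mu$ under $\H_J$-module isomorphisms $M_\mu \to M_\lambda^{(\mu)}$. Since $M_\mu$ is irreducible, Schur's lemma supplies a scalar $\kappa_i \in \field^\times$ relating these two isomorphisms. The inductive hypothesis $\C_T = \sum_S a^\mu_{S,T} C_S$ in $M_\mu$ (with $a^\mu_{S,T} \in \field_0 \cap \field_\infty$ and the matrix $a^\mu$ specializing to the identity at $u=0,\infty$) then transports to
\[
(\liftCp_{Q_T})^J \;=\; \kappa_i \sum_{S\in\text{SYT}(\mu)} a^\mu_{S,T}\,(\liftCup_{Q_S})^J
\]
inside $M_\lambda^{(\mu)}$. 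Feeding this back through the inversion of Lemma \ref{l lift transition matrix} yields an expression $\C_Q = \sum_{Q''} a_{Q'',Q}\,C_{Q''}$ whose entries are polynomial combinations of $\kappa_i$, the $a^\mu_{S,T}$, the Lemma coefficients $m, m'$, and (after a secondary induction) previously computed entries $a_{Q'',Q'}$ with $Q' \ldneq_r Q$. Provided $\kappa_i \in \field_0 \cap \field_\infty$ and $\kappa_i(0)=\kappa_i(\infty)=1$, every ingredient lies in $\field_0 \cap \field_\infty$, the diagonal entries tend to $1$, and the off-diagonal entries tend to $0$ at both limits, giving the theorem.

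The crux will therefore be controlling $\kappa_i$, and my plan is a secondary induction on the corner index $i$ nested inside the primary induction on $r$. In the base case $i=1$ there is no $Q' \ldneq_r Q_T$, so $(\liftCp_{Q_T})^J = \C_{Q_T}$ exactly, and the displayed identity directly expresses $\kappa_1$ as a specific combination of $a^\mu$- and $m$-entries whose limits at $u=0,\infty$ are controlled by the induction hypothesis together with $m \in u\field_0 \cap \ui\field_\infty$. For $i > 1$ the corrections $\sum_{Q'\ldneq_r Q_T}m'_{Q',Q_T}\C_{Q'}$ in the formula for $(\liftCp_{Q_T})^J$ are absorbed using the formulas for $\C_{Q'}$ already established earlier in the secondary induction, and a parallel leading-term analysis pins down $\kappa_i$. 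Positivity (Theorem \ref{t positive coefficients}) is invoked to rule out cancellations in the leading terms, ensuring that $\kappa_i$ has neither a zero nor a pole at $u=0,\infty$ and in fact specializes to $1$ at both limits.
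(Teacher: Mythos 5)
First, note that this paper does not itself prove Theorem~\ref{t transition C' to C}: the statement is imported from \cite{BProjected}, so there is no in-paper proof to compare against, and I can only assess your proposal on its own terms.

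Your reduction is sound as far as it goes. Lemma~\ref{l lift transition matrix} is unitriangular with off-diagonal entries in $\u\field_0\cap\ui\field_\infty$, so inverting it preserves these properties; and Corollary~\ref{c geck relative a invariant} together with the semisimplicity of $\field\H_J$ shows that $\C_S \mapsto (\liftCp_{Q_S})^J$ and $C_S\mapsto(\liftCup_{Q_S})^J$ (for $Q_S$ restricting to shape $\mu=\lambda-a_i$) extend to $\H_J$-module isomorphisms from $M_\mu$ onto the $M_\mu$-isotypic component of $\Res_{\H_J}M_\lambda$, so a Schur scalar $\kappa_i$ exists. But the argument is circular at precisely the step you identify as the crux: nothing in what you have set up determines $\kappa_i$. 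Write $\C_{Q}=\sum_{Q'}b_{Q',Q}C_{Q'}$ for the entries the theorem is about. Expanding both sides of your displayed identity via Lemma~\ref{l lift transition matrix} and the secondary induction hypothesis, and extracting the coefficient of $C_{Q_T}$, gives
\[
b_{Q_T,Q_T} \;=\; \kappa_i\,a^\mu_{T,T} \;-\; \sum_{Q'\ldneq_r Q_T} m'_{Q'Q_T}\,b_{Q_T,Q'},
\]
with the last sum in $\u\field_0\cap\ui\field_\infty$: one equation in the two unknowns $\kappa_i$ and $b_{Q_T,Q_T}$. The base case $i=1$ does not break the circle --- there $(\liftCp_{Q_T})^J=\C_{Q_T}$ exactly and the correction sum vanishes, but the equation still reads $b_{Q_T,Q_T}=\kappa_1 a^\mu_{T,T}$, and $b_{Q_T,Q_T}$ is precisely the quantity you are trying to control. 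Invoking Theorem~\ref{t positive coefficients} does not help: nonnegativity of the integers $\mu(x,w)$ rules out sign cancellation among $W$-graph structure constants, but it says nothing about the $\u$-valuation of a Schur scalar, which is an independent piece of data. To close the gap you would need an external normalization of the two module isomorphisms --- for instance compatibility with a nondegenerate bar-equivariant bilinear form on $M_\lambda$ whose Gram matrices in both bases are controlled at $\u=0,\infty$, or a direct evaluation of $\kappa_i$ on a distinguished vector. As written, the scalar is simply underdetermined.
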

See \cite[Example 7.5]{BProjected} for an example of this transition matrix.  One consequence of this theorem is that $\field_0 \Gamma'_\lambda = \field_0 \Gamma_\lambda$.  Let  $\L_\lambda$ denote this  $\field_0$-lattice.


\begin{lemma}[The projection lemma]\label{l projections are not too tricky}
Fix  $i \in [k_\lambda]$.  Let $x = \sum_{Q \in \text{SYT}(\lambda)} a_Q \C_Q$ be an element of $M_\lambda$ such that for each $Q$ with $\sh(Q|_{[r-1]}) = \lambda-a_j$ and $j \geq i$, there holds $a_Q \in \field_0$.
Then
\[ p_{M_{\lambda-a_i}}(x) \equiv \sum_{\substack{ Q \in \text{SYT}(\lambda), \\ Q_{a_i} = r}} a_Q (\liftCp_Q)^J \mod \u \L_{\lambda-a_i}. \]

Similarly, if $x = \sum_{Q \in \text{SYT}(\lambda)} a_Q C_Q$ is an element of $M_\lambda$ such that for each $Q$  with $\sh(Q|_{[r-1]}) = \lambda-a_j$ and $j \leq i$, there holds $a_Q \in \field_0$, then
\[ p_{M_{\lambda-a_i}}(x) \equiv \sum_{\substack{ Q \in \text{SYT}(\lambda), \\ Q_{a_i} = r}} a_Q (\liftCup_Q)^J \mod \u \L_{\lambda-a_i}. \]
\end{lemma}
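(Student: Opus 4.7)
The plan is to invert the triangular relation of Lemma~\ref{l lift transition matrix}, thereby expressing the lower canonical basis in terms of the projected basis, and then to apply $p_{M_{\lambda-a_i}}$ termwise while tracking the $\u$-adic size of the coefficients.

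Since $(\liftCp_Q)^J = \C_Q + \sum_{Q' \ldneq_r Q} m'_{Q'Q} \C_{Q'}$ is unitriangular with respect to $\ldneq_r$ and its off-diagonal entries lie in $\u\field_0$, its inverse shares both properties. Hence one obtains
\[
\C_Q \;=\; (\liftCp_Q)^J \;+\; \sum_{Q' \ldneq_r Q} d_{Q'Q}\,(\liftCp_{Q'})^J, \qquad d_{Q'Q} \in \u\field_0.
\]
Since $(\liftCp_{Q'})^J$ is, by definition, the projection of $\C_{Q'}$ onto the $M_{\lambda-a_{j'}}$-isotypic, where $j'$ is determined by $Q'_{a_{j'}}=r$, applying $p_{M_{\lambda-a_i}}$ to this expansion retains only those $Q'$ with $Q'_{a_i}=r$.

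I would then split on the corner of $\lambda$ containing $r$ in $Q$, writing $Q_{a_j}=r$. For $j=i$ only the diagonal term survives, yielding $(\liftCp_Q)^J$: any $Q' \ldneq_r Q$ would force $\sh(Q'|_{[r-1]}) \gdneq \lambda-a_i$, which is incompatible with $Q'_{a_i}=r$. For $j<i$ no term survives, since the diagonal lies in the wrong isotypic, while $Q' \ldneq_r Q$ with $Q'_{a_i}=r$ would demand $\lambda-a_i \gdneq \lambda-a_j$, and removing the more easterly corner $a_i$ of $\lambda$ yields a strictly smaller partition in dominance order than removing $a_j$. For $j>i$ the diagonal dies, but the surviving error terms all carry coefficients in $\u\field_0$. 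Summing over $Q$ and invoking the hypothesis that $a_Q \in \field_0$ whenever $Q_{a_j}=r$ with $j \geq i$, these error terms together lie in $\u$ times the $\field_0$-span of $\{(\liftCp_{Q'})^J : Q'_{a_i}=r\}$.

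To conclude, I would identify the $M_{\lambda-a_i}$-isotypic of $M_\lambda$ with $M_{\lambda-a_i}$ via the $\H_J$-isomorphism carrying $(\liftCp_Q)^J \mapsto \C_{Q|_{[r-1]}}$ for each $Q$ with $Q_{a_i}=r$; under this identification the above $\field_0$-span is exactly $\L_{\lambda-a_i}$, and the claimed congruence follows. The second assertion is obtained by the parallel argument using the lower-unitriangular half of Lemma~\ref{l lift transition matrix} with the roles of $\ldneq_r$ and $\gdneq_r$ interchanged; the reversal of the hypothesis from $j \geq i$ to $j \leq i$ is exactly what is needed to absorb the error terms that then arise for $j < i$. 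I expect the main obstacle to be bookkeeping --- matching each triangular half of Lemma~\ref{l lift transition matrix} to the correct direction of dominance on the shapes $\lambda - a_\bullet$; once that is pinned down the three-case split is essentially forced.
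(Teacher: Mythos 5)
Your proof is correct and amounts precisely to the expansion the paper has in mind when it says the lemma ``follows easily from Lemma~\ref{l lift transition matrix}'': invert the unitriangular relation, apply the projector, and track $\u$-adic valuation in the three dominance cases. The bookkeeping on the direction of $\ldneq_r$ versus $\gdneq_r$ in the two halves and the identification of the isotypic component with $M_{\lambda-a_i}$ via the $\H_J$-cell isomorphism are all handled correctly.
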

\begin{proof}
This follows easily from Lemma \ref{l lift transition matrix}.
\end{proof}

\section{The nonstandard Hecke algebra $\nsH_r$}
\label{s the nonstandard Hecke algebra}
The nonstandard Hecke algebra  was introduced in \cite{GCT4} to study the Kronecker problem. Its role in the Kronecker problem is discussed in  \cite[\textsection1]{BMSGCT4} and \cite{canonical};  some of its representation theory is discussed in \cite[\textsection11]{BMSGCT4} and \cite{GCT7}, including a complete description  $\field \nsH_{3}$ and  $\field \nsH_{4}$-irreducibles; the problem of constructing a canonical basis for  $\nsH_r$ is discussed and \cite[\textsection19]{BMSGCT4} and \cite{canonical}.  The main purpose of this paper is to determine the irreducibles of the nonstandard Temperley-Lieb algebra $ \field \nsH_{r,2}$, which is a quotient algebra of  $\field \nsH_{r}$.  Here we assemble some basic facts about $\nsH_r$ from \cite{Bnsbraid, GCT4, BMSGCT4} and prove a few new ones.


\subsection{Definition of $\nsH_r$}
Recall that $S$ is now defined to be $\{s_1,\ldots,s_{r-1}\}$.  We repeat the definition of $\nsH_r$ from the introduction:
\begin{definition}
\label{d nonstandard Hecke algebra}
The \emph{type $A$ nonstandard Hecke algebra}  $\nsH_r$ is the subalgebra of $\H_r \tsrvw \H_r$ generated by the elements
\be \label{e sP definition}
\sP_s := \C_s \tsrvw \C_s + C_s \tsrvw C_s, \ s \in S.
\ee
We let $\nsbr{\Delta}:\nsH_r \hookrightarrow   \H_r \tsrvw \H_r$ denote the canonical inclusion, which we think of as a deformation of the coproduct
$\Delta_{\ZZ \S_r} :\ZZ \S_r \to \ZZ \S_r \tsrvw \ZZ \S_r$,  $w \mapsto w \tsrvw w$.
\end{definition}
The nonstandard Hecke algebra is also the subalgebra of $\H_r \tsrvw \H_r$ generated by
\[ \sQ_s := [2]^2 - \sP_s = - \C_s \tsrvw C_s - C_s \tsrvw \C_s, \ s \in S. \]
We will write $\sP_{i}$ (resp.  $\sQ_i$) as shorthand for  $\sP_{s_{i}}$ (resp.  $\sQ_{s_i}$), $i \in [r-1]$.
For a ring homomorphism $\field \to \mathbf{A}$, we have the specialization $\field \nsH_r := \field \tsr_{\mathbf{A}} \nsH_r$ of the nonstandard Hecke algebra.


The elements $\sP_i$ and  $\sQ_i$ satisfy the quadratic relations $\sP_i^2 = [2]^2 \sP_i$ and  $\sQ_i^2 = \f \sQ_i$, and  $\sP_i$ and  $\sP_{i+1}$ satisfy a nonstandard version of the braid relation  (see \cite{Bnsbraid}).  For  $r \geq 4$, the $\sP_i$ satisfy additional relations which seem to be extremely difficult to describe (see \cite{GCT4}).
\subsection{Representation theory of $S^2\H_r$}
\label{ss representation theory of S2Hr}
The representations of  $\nsH_r$ are related to those of $S^2 \H_r$ by the fact that $\nsH_r \subseteq S^2 \H_r$ (see, e.g., \cite[Proposition 11.6]{BMSGCT4}), so any $S^2 \H_r$-module is an $\nsH_r$-module by restriction.
We recall the description of the $\field S^2\H_r$-irreducibles from \cite{BMSGCT4}.  These irreducibles are close to those of $\field \nsH_r$, and even closer to
those of $\field \nsH_{r,2}$, which will be described in  \textsection\ref{s irreducibles of nshr2}.

First note that we have the following commutativity property for any $\H_r$-modules  $M$ and  $M'$:
\be
\label{e commutativity S2H}
\Res_{S^2\H_r} M \tsrvw M' \cong \Res_{S^2\H_r} M' \tsrvw M,
\ee
where the isomorphism is given by the flip $\tau$, $\tau(a \tsr b) = b \tsr a$.

Recall from  \textsection\ref{ss type A combinatorics preliminaries} that $\mathscr{P}_{r}$ denotes the set of partitions of size $r$ and $\mathscr{P}'_{r}$ the set of  partitions of  $r$ that are not a single row or column shape.
\begin{propdef}[\cite{BMSGCT4}]
\label{p S2Hr representations}
Define the following  $S^2 \H_r$-modules.  After tensoring these with $\field$, this is the list of distinct $\field S^2\H_r$-irreducibles
\begin{list}{\emph{(\arabic{ctr})}} {\usecounter{ctr} \setlength{\itemsep}{1pt} \setlength{\topsep}{2pt}}
\item $M^\mathbf{A}_{\{\lambda,\mu\}} := \Res_{S^2\H_r} M^\mathbf{A}_\lambda \tsrvw M^\mathbf{A}_\mu$, $\{\lambda,\mu\}\subseteq \mathscr{P}_r$, $\lambda\neq\mu$,
\item $S^2 M^\mathbf{A}_\lambda := \Res_{ S^2\H_r} S^2 M^\mathbf{A}_\lambda$, $\lambda\in\mathscr{P}_r$,
\item $\Wedge^2 M^\mathbf{A}_\lambda := \Res_{ S^2\H_r} \Wedge^2 M^\mathbf{A}_\lambda$, $\lambda\in\mathscr{P}'_r$.
\end{list}
Let $ M_{\{\lambda,\mu\}}$, $ S^2 M_\lambda$,
 $ \Wedge^2 M_\lambda$ denote the corresponding $\field S^2 \H_r$-modules.
\end{propdef}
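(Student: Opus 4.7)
The plan is to view $\field S^2 \H_r$ as the $\tau$-fixed subalgebra of $\field\H_r\tsrvw\field\H_r$ and apply standard Clifford theory for the action of $G = \ZZ/2\ZZ = \langle\tau\rangle$ on this semisimple algebra. Since $|G| = 2$ is invertible in $\field$, the fixed subalgebra is itself semisimple, and its simple modules correspond to $\tau$-orbits on $\text{Irr}(\field\H_r\tsrvw\field\H_r) = \{M_\lambda\tsrvw M_\mu : \lambda,\mu\vdash r\}$, together with an irreducible of the stabilizer when the orbit is a singleton.

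First I would use \eqref{e commutativity S2H} to identify the $\tau$-action on irreducibles as $M_\lambda\tsrvw M_\mu \leftrightarrow M_\mu\tsrvw M_\lambda$, so the orbits are either size-$2$ pairs $\{(\lambda,\mu),(\mu,\lambda)\}$ with $\lambda\neq\mu$, or singletons $\{(\lambda,\lambda)\}$. For a size-$2$ orbit, the two Wedderburn blocks $\End_\field(M_\lambda\tsrvw M_\mu)\oplus\End_\field(M_\mu\tsrvw M_\lambda)$ are interchanged by $\tau$, and their $\tau$-invariants form a single matrix algebra isomorphic to $\End_\field(M_\lambda\tsrvw M_\mu)$, diagonally embedded via the swap; this yields the irreducible $M_{\{\lambda,\mu\}}$, and simultaneously shows $\Res_{S^2\H_r} M_\lambda\tsrvw M_\mu \cong \Res_{S^2\H_r} M_\mu\tsrvw M_\lambda$. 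For a singleton orbit, the flip $v\tsrvw w \mapsto w\tsrvw v$ on $M_\lambda\tsrvw M_\lambda$ is a $\field S^2\H_r$-linear involution (a direct check: for $x\tsrvw y + y\tsrvw x \in S^2\H_r$, commuting past the flip swaps the two terms), splitting $M_\lambda\tsrvw M_\lambda$ into the $\pm 1$ eigenspaces $S^2 M_\lambda \oplus \Wedge^2 M_\lambda$; dually, $\End_\field(M_\lambda\tsrvw M_\lambda)^\tau$ decomposes as $\End_\field(S^2 M_\lambda) \oplus \End_\field(\Wedge^2 M_\lambda)$, so each nonzero summand is a simple $\field S^2\H_r$-module. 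The summand $\Wedge^2 M_\lambda$ vanishes precisely when $\dim M_\lambda = 1$, i.e., when $\lambda$ is a single row or column shape, explaining the restriction to $\mathscr{P}'_r$ in case (3).

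Completeness and distinctness then follow from a Wedderburn dimension count: the proposed simple modules account for all simple factors of $(\field\H_r\tsrvw\field\H_r)^\tau = \field S^2\H_r$. The main subtlety I anticipate lies in case~(1): one must verify that $\Res_{S^2\H_r} M_\lambda\tsrvw M_\mu$ is genuinely simple, rather than a direct sum of two isomorphic copies of some smaller module. This is the crux of the Clifford argument, and it reduces to showing $\hom_{\field S^2\H_r}(M_\lambda\tsrvw M_\mu,\, M_\lambda\tsrvw M_\mu)$ is one-dimensional; computing this as the $\tau$-invariants of $\hom_{\field\H_r\tsrvw\field\H_r}$ on $(M_\lambda\tsrvw M_\mu)\oplus(M_\mu\tsrvw M_\lambda)$, Schur's lemma together with the hypothesis $M_\lambda \not\cong M_\mu$ forces this space to be one-dimensional.
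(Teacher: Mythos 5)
The paper does not prove this statement itself; it cites it from \cite{BMSGCT4}. Your Clifford-theoretic argument is a correct, self-contained proof. Identifying $\field S^2\H_r$ with the $\tau$-fixed subalgebra of the split semisimple algebra $\field\H_r\tsrvw\field\H_r$ and tracking the $\tau$-action on Wedderburn blocks is exactly the right mechanism: for $\lambda\neq\mu$ the blocks $\End(M_\lambda\tsrvw M_\mu)$ and $\End(M_\mu\tsrvw M_\lambda)$ are interchanged, so the invariant subalgebra contributes a single matrix block mapping onto $\End(M_\lambda\tsrvw M_\mu)$, giving irreducibility of the restriction; for $\lambda=\mu$, the $\tau$-action on $\End(M_\lambda\tsrvw M_\lambda)$ is conjugation by the flip, and the invariants are $\End(S^2M_\lambda)\oplus\End(\Wedge^2M_\lambda)$, with $\Wedge^2M_\lambda=0$ exactly when $f_\lambda=1$, i.e.\ $\lambda\notin\mathscr{P}'_r$. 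The completeness and distinctness then fall out of the block decomposition.

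One small remark: your final paragraph treats the one-dimensionality of $\End_{\field S^2\H_r}(M_\lambda\tsrvw M_\mu)$ as the ``crux,'' but the block-level argument you gave two sentences earlier already establishes irreducibility directly, since the image of $\field S^2\H_r$ in $\End(M_\lambda\tsrvw M_\mu)$ is everything. The endomorphism computation via ``$\tau$-invariants of $\hom_A$ on $M\oplus M^\tau$'' is also valid (via the Morita equivalence $A^\tau\sim A\rtimes\ZZ/2$, or by noting $\End_A(M\oplus M^\tau)=\field\oplus\field$ with $\tau$ swapping the factors), but it is redundant. You may streamline by dropping that last paragraph. You should also make explicit that $\field$ is a splitting field for $\field\H_r$, hence for $\field\H_r\tsrvw\field\H_r$, which is what lets you write the Wedderburn decomposition as matrix algebras over $\field$ in the first place; this is standard in type $A$ over $\QQ(\u)$ but worth stating.
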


\subsection{Contragradients of $\H_r$-modules}
Any anti-automorphism $S$ of an $\mathbf{A}$-algebra $H$ allows us to define contragradients of $H$-modules: let $\langle \cdot, \cdot \rangle: M \tsr M^* \to \mathbf{A}$ be the canonical pairing, where $M^*$ is the $\mathbf{A}$-module  $\hom_{\mathbf{A}}(M,\mathbf{A})$. Then the $H$-module structure on  $M^*$ is defined by
\[ \langle m, m'h\rangle = \langle m S(h),m' \rangle \text{ for any } h \in H,\ m \in M, m' \in M^*.\]

There is an $\mathbf{A}$-algebra automorphism $\theta : \H_r \to \H_r$ defined by $\theta(T_s) = - T_s^{-1},\ s \in S$.
It is not hard to show that $\theta$ is an involution and satisfies
$\theta(\C_w) = (-1)^{\ell(w)} C_w.$
Let $1^{\text{op}}$ be the $\mathbf{A}$-anti-automorphism of $\H_r$ given by $1^{\text{op}}(T_w) = T_{w^{-1}}$.
Let $\theta^{\text{op}}$ be the $\mathbf{A}$-anti-automorphism of $\H_r$ given by $\theta^{\text{op}} = \theta \circ 1^{\text{op}} = 1^{\text{op}} \circ \theta$.


Let  $\{ \dual{\C_w} : w \in \S_r\} \subseteq \hom_{\mathbf{A}}(\H_r,\mathbf{A})$ be the basis dual to  $\{\C_w: w \in \S_r\}$. Let $w_0$ be the longest element of $\S_r$.

Let  $Z_\lambda^*$ be the SYT of shape $\lambda$ with $1, \ldots, \lambda_1$ in the first row,  $\lambda_1+1,\ldots,\lambda_1+\lambda_2$ in the second row, etc.
For an SYT $Q$, let  $\ell(Q)$ denote the distance between $Q$ and $Z_\lambda^*$ in the DE graph on SYT$(\lambda)$.
It is not hard to show that for any $P \in \text{SYT}(\lambda)$, $\ell(Q) \equiv  \ell(w)-\ell(z)\mod 2$, where $w = \text{RSK}^{-1}(P,Q),$ $z = \text{RSK}^{-1}(P,Z_\lambda^*)$.


\begin{proposition}\label{p dual basis to C'}
\begin{list}{\emph{(\roman{ctr})}} {\usecounter{ctr} \setlength{\itemsep}{1pt} \setlength{\topsep}{2pt}}
\item The right $\H_r$-modules $\H_r^\dualone$ and $\H_r$ are isomorphic via
\[\alpha_\dualone: \H_r^\dualone \xrightarrow{\cong} \H_r, \ \dual{\C_w} \mapsto C_{w_0 w}, \ w \in \S_r. \]
\item The right $\H_r$-modules $\H_r^\dualtheta$ and $\H_r$ are isomorphic via
\[\alpha_\dualtheta: \H_r^\dualtheta \xrightarrow{\cong} \H_r, \ \dual{\C_w} \mapsto (-1)^{\ell(w)} \C_{w_0 w},\ w \in \S_r. \]
\item The restriction of  $\alpha_\dualone^{-1}$ to any right cell $\Gamma_\lambda$ of  $\Gamma_{\S_r}$ yields the isomorphism
\[ M^\mathbf{A}_\lambda \xrightarrow{\cong} (M^\mathbf{A}_{\lambda})^\dualone, \ C_{Q} \mapsto \dual{\C_{Q}}, \ Q \in \text{SYT}(\lambda). \]
\item The restriction of $\alpha_\dualtheta^{-1}$ to any right cell $\Gamma'_\lambda$ of  $\Gamma'_{\S_r}$ yields, up to a sign, the isomorphism
\[ M^\mathbf{A}_{\lambda'} \xrightarrow{\cong} (M^\mathbf{A}_{\lambda})^\dualtheta, \ (-1)^{\ell(\transpose{Q})}\C_{Q} \mapsto \dual{\C_{\transpose{Q}}}, \ Q \in \text{SYT}(\lambda'). \]
\end{list}
\end{proposition}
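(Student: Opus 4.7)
The plan is to prove (i) and (ii) by verifying the right $\H_r$-module homomorphism property on algebra generators, and then to derive (iii) and (iv) from them by restricting to right cells. Bijectivity of $\alpha_\dualone$ and $\alpha_\dualtheta$ is immediate since each sends a basis to a basis, so in each case only $\H_r$-equivariance requires work.

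For (i), unpacking the contragradient action $\langle m, m'h\rangle = \langle m\cdot 1^{\text{op}}(h),\, m'\rangle$ and using that $\{\dual{\C_w}\}$ is dual to $\{\C_w\}$, the requirement that $\alpha_\dualone$ be a right $\H_r$-module map reduces to the identity
\[
[\C_w]\bigl(\C_v \cdot 1^{\text{op}}(h)\bigr) \;=\; [C_{w_0 v}]\bigl(C_{w_0 w}\cdot h\bigr) \qquad (v,w\in\S_r,\ h\in\H_r),
\]
where $[X]Y$ denotes the coefficient of $X$ in $Y$. It suffices to verify this for $h = C_s$ (so $1^{\text{op}}(h)=h$). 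Writing $C_s = \C_s - [2]$ and expanding both sides via \eqref{e prime C on prime canbas} and \eqref{e C on canbas}, a short case analysis (on whether $s$ lies in $R(\C_v)$ and in $R(\C_w)$), combined with the descent complementarity $R(C_{w_0 v}) = S\setminus R(\C_v)$, reduces everything to the standard Kazhdan-Lusztig symmetry $\mu(w,v) = \mu(w_0 v,\, w_0 w)$, which follows from $P'_{x,w} = P'_{w_0 w,\,w_0 x}$.

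For (ii), the analogous required identity is
\[
[\C_w]\bigl(\C_v \cdot \theta^{\text{op}}(h)\bigr) \;=\; (-1)^{\ell(v)+\ell(w)}\,[\C_{w_0 v}]\bigl(\C_{w_0 w}\cdot h\bigr),
\]
and taking $h = \C_s$ with $\theta^{\text{op}}(\C_s) = -C_s$ (from $\theta(\C_w) = (-1)^{\ell(w)} C_w$) reduces the calculation to the one in (i) up to a sign. The factor $(-1)^{\ell(v)+\ell(w)}$ is supplied by the parity relation that $\mu(v,w)\neq 0$ forces $\ell(v)+\ell(w)$ to be odd, which is immediate from the formula $P'_{v,w} = \u^{\ell(v)-\ell(w)} P_{v,w}$ together with $P_{v,w} \in \ZZ[\u^2]$.

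For (iii) and (iv), the key combinatorial input is the classical RSK identity $Q(w_0 w) = \transpose{Q(w)}$, which together with the cell labeling conventions of \textsection\ref{ss cell label conventions C_Q C'_Q} shows that left multiplication by $w_0$ carries the upper right cell $\Gamma_\lambda$ bijectively onto the lower right cell $\Gamma'_\lambda$ (of the same shape $\lambda$) and carries $\Gamma'_\lambda$ onto $\Gamma'_{\lambda'}$. Restricting $\alpha_\dualone^{-1}$ to $\mathbf{A}\Gamma_\lambda$ therefore lands in the $\dualone$-dual of $\mathbf{A}\Gamma'_\lambda \cong M^\mathbf{A}_\lambda$; reindexing the formula $C_{w_0 w} \mapsto \dual{\C_w}$ by $Q = Q(w) = \transpose{Q(w_0 w)}$ yields $C_Q \mapsto \dual{\C_Q}$, proving (iii). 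The same argument applied to $\alpha_\dualtheta^{-1}$ restricted to a lower cell produces (iv); here the sign $(-1)^{\ell(w_0)+\ell(w)}$ coming from $\alpha_\dualtheta^{-1}$ converts, via the congruence $\ell(Q) \equiv \ell(w) - \ell(z) \pmod 2$ noted in the paragraph above the proposition, into $(-1)^{\ell(\transpose Q)}$ multiplied by a single cell-global sign---this is precisely the ``up to a sign'' qualifier in the statement. The main obstacle is the sign bookkeeping in (ii) and (iv): one must verify both that $(-1)^{\ell(w)}$ descends to a well-defined function on the recording tableau when the insertion tableau is held fixed throughout a cell, and that this parity agrees with the DE-graph distance $(-1)^{\ell(\transpose Q)}$ up to a global sign on the cell.
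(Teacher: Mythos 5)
Your proof takes essentially the same route as the paper's: verify $\H_r$-equivariance of $\alpha_\dualone$ and $\alpha_\dualtheta$ on generators, reducing to the descent complementarity $R(w_0 w)=S\setminus R(w)$, the Kazhdan-Lusztig symmetry $\mu(w,w')=\mu(w_0 w',w_0 w)$, and the parity fact that $\mu(w',w)\neq 0$ forces $\ell(w')+\ell(w)$ odd; then derive (iii)-(iv) from $Q(w_0 w)=\transpose{Q(w)}$ together with the cell-labeling conventions. One small slip: in the reindexing for (iii) you write $Q=Q(w)=\transpose{Q(w_0 w)}$, whereas the label that makes both sides read $Q$ is $Q=\transpose{Q(w)}=Q(w_0 w)$; the conclusion $C_Q\mapsto\dual{\C_Q}$ is unaffected since this is only a relabeling of the index.
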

\begin{proof}
We first record the following formulae which are immediate from (\ref{e prime C on prime canbas}), (\ref{e C on canbas}), and $R(w_0 w) = S \backslash R(w)$.
\begin{align}\label{e prime C on unprime canbas}
C_{w_0 w} \C_{s} =&
\left\{ \begin{array}{ll}
\two C_{w_0 w} + \sum_{\substack{\{w_0 w' \in \S_r : s \notin R(w')\}}} \mu(w_0 w',w_0 w) C_{w_0 w'} & \text{if}\ s \in R(w),\\
0 & \text{if}\ s \notin R(w).
\end{array} \right.\\
\label{e theta prime C on prime canbas}
\C_w \theta(\C_{s})=&
\left\{ \begin{array}{ll} 0 & \text{if}\ s \in R(w),\\
\two \C_w - \sum_{\substack{\{w' \in \S_r : s \in R(w')\}}} \mu(w',w)\C_{w'} & \text{if}\ s \notin R(w).
\end{array} \right.
\end{align}

By the definition of $\H_r^\dualone$,
\begin{equation}
\dual{\C_w} \C_{s}=
\left\{\begin{array}{ll}
[2] \dual{\C_w} + \sum_{\substack{\{w' \in \S_r : s \notin R(w')\}}} \mu(w,w') \dual{\C_{w'}} & \text{if}\ s \in R(w), \\
0 & \text{if}\ s \notin R(w).
\end{array}\right.
\end{equation}
Statement (i) then follows from (\ref{e prime C on unprime canbas}) as $\mu(w,w') = \mu(w_0 w',w_0 w)$ \cite[Corollary 3.2]{KL}.

By the definition of $\H_r^\dualtheta$ and from (\ref{e theta prime C on prime canbas}), we obtain
\begin{equation}
\dual{\C_w} \C_{s}=
\left\{\begin{array}{ll}
[2] \dual{\C_w} & \text{if}\ s \notin R(w),\\
-\sum_{\substack{\{w' \in \S_r : s \notin R(w')\}}} \mu(w,w') \dual{\C_{w'}} & \text{if}\ s \in R(w).
\end{array}\right.
\end{equation}
Statement (ii) then follows from \eqref{e prime C on prime canbas} using $R(w_0 w) = S \backslash R(w)$, $\mu(w,w') = \mu(w_0 w',w_0 w)$, and the fact that $\mu(w',w) \neq 0$ implies  $(-1)^{\ell(w')} = - (-1)^{\ell(w)}$.

Statements (iii) and (iv) then follow from (i) and (ii), respectively, the fact that $Q(w_0 w) = \transpose{Q(w)}$ (see, e.g., \cite[A1.2]{F}), and the definitions in \textsection\ref{ss cell label conventions C_Q C'_Q}.
\end{proof}

As discussed in \cite{Bnsbraid, BMSGCT4}, the inclusion $\nsbr{\Delta}:\nsH_r \hookrightarrow   \H_r \tsrvw \H_r$ is not a good approximation of the coproduct $\Delta_{\ZZ \S_r}$, though it is in a certain sense the closest approximation possible.  There are a couple ways that $\nsH_r$ behaves like a Hopf algebra, one of which is the following.
\begin{proposition}[\cite{Bnsbraid}] \label{p hecke algebra antipode}
The involutions $1^{\text{op}}$ and $\theta^{\text{op}}$ are antipodes in the following sense:
\begin{flalign*}
\mu \circ (1^{\text{op}} \tsr 1) \circ \nsbr{\Delta} &= \eta \circ \nsbr{\epsilon}_+,  \\
\mu \circ (\theta^{\text{op}} \tsr 1) \circ \nsbr{\Delta} &= \eta \circ \nsbr{\epsilon}_-,
\end{flalign*}
where these are equalities of maps from $\nsH_r$ to $\H_r$. Here $\mu$ is the multiplication map for $\H_r$ and $\eta : \field \to \H_r$ is the unit of $\H_r$.
\end{proposition}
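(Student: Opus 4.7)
The plan is to use the standard Hopf-algebra convolution bootstrap: verify the two identities on the algebra generators $\sP_s$ and then extend to all of $\nsH_r$ multiplicatively. The only structural features that are needed are that $\nsbr{\Delta}$ is an algebra homomorphism (automatic, as it is the inclusion $\nsH_r \hookrightarrow \H_r \tsr \H_r$), that $1^{\text{op}}$ and $\theta^{\text{op}}$ are anti-automorphisms of $\H_r$, and that $\nsbr{\epsilon}_+, \nsbr{\epsilon}_-$ are characters of $\nsH_r$.

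For the base case I would verify directly on generators. Since $s = s^{-1}$, the anti-involution $1^{\text{op}}$ fixes $T_s$, and hence fixes both $\C_s = T_s + \ui$ and $C_s = T_s - \u$. Using the canonical basis products $\C_s^2 = [2]\C_s$, $C_s^2 = -[2]C_s$ and the identity $\C_s - C_s = \u + \ui = [2]$, one computes
\[\mu \circ (1^{\text{op}} \tsr 1)(\sP_s) = \C_s^2 + C_s^2 = [2](\C_s - C_s) = [2]^2,\]
matching $\eta \circ \nsbr{\epsilon}_+(\sP_s)$. For the second identity, $\theta^{\text{op}}(\C_s) = \theta(\C_s) = -C_s$ and $\theta^{\text{op}}(C_s) = -\C_s$, so
\[\mu \circ (\theta^{\text{op}} \tsr 1)(\sP_s) = -C_s\C_s - \C_s C_s = 0\]
by the defining Hecke relation $(T_s - \u)(T_s + \ui) = 0$, matching $\eta \circ \nsbr{\epsilon}_-(\sP_s) = 0$.

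For the inductive step I would use a Sweedler-style calculation. Assume the identity for $1^{\text{op}}$ holds for $x, y \in \nsH_r$, and write $\nsbr{\Delta}(x) = \sum_i a_i \tsr b_i$, $\nsbr{\Delta}(y) = \sum_j c_j \tsr d_j$ with factors in $\H_r$. Since $\nsbr{\Delta}$ is an algebra homomorphism, $\nsbr{\Delta}(xy) = \sum_{i,j} a_i c_j \tsr b_i d_j$, and applying $\mu \circ (1^{\text{op}} \tsr 1)$ together with the anti-homomorphism property of $1^{\text{op}}$ gives
\[\sum_{i,j} 1^{\text{op}}(c_j)\, 1^{\text{op}}(a_i)\, b_i\, d_j \;=\; \sum_j 1^{\text{op}}(c_j)\Big(\sum_i 1^{\text{op}}(a_i) b_i\Big) d_j.\]
The inner parenthesised sum is the scalar $\nsbr{\epsilon}_+(x) \cdot 1 \in \H_r$ by the inductive hypothesis on $x$, so it pulls out; applying the hypothesis on $y$ then reduces the expression to $\nsbr{\epsilon}_+(x)\nsbr{\epsilon}_+(y) \cdot 1 = \nsbr{\epsilon}_+(xy) \cdot 1$ because $\nsbr{\epsilon}_+$ is a character. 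Together with the trivial case $x = 1$, this propagates the identity over all products of generators and hence over all of $\nsH_r$. The argument for $\theta^{\text{op}}$ is identical.

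The one point that requires care, and what I would flag as the main subtlety, is that $\nsH_r$ is \emph{not} a bialgebra: $\nsbr{\Delta}$ maps into $\H_r \tsr \H_r$, which is strictly larger than $\nsH_r \tsr \nsH_r$. Consequently, the bootstrap cannot be phrased purely inside $\nsH_r$, and the intermediate tensor factors $a_i, b_i, c_j, d_j$ must be tracked as elements of the ambient algebra $\H_r$. Fortunately, the argument above never uses multiplicativity of $\nsbr{\Delta}$ into $\nsH_r \tsr \nsH_r$; it only uses that $\nsbr{\Delta}$ is multiplicative into $\H_r \tsr \H_r$ and that $\nsbr{\epsilon}_\pm$ are characters of $\nsH_r$, and this is enough to close the induction.
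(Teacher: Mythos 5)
Your proof is correct. The base case computation on the generators $\sP_s$ is right: $1^{\text{op}}$ fixes $\C_s$ and $C_s$, giving $\C_s^2 + C_s^2 = [2]\C_s - [2]C_s = [2]^2$, while $\theta^{\text{op}}(\C_s) = -C_s$ and $\theta^{\text{op}}(C_s) = -\C_s$ give $-C_s\C_s - \C_sC_s = 0$ by the quadratic Hecke relation, and both match the respective characters. The convolution bootstrap then closes cleanly: since the inner sum $\sum_i 1^{\text{op}}(a_i)b_i$ is a scalar multiple of the identity in $\H_r$ by the inductive hypothesis, it factors out of the sandwich, and the character property of $\nsbr{\epsilon}_{\pm}$ finishes the step. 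You are also right to flag the key structural point, which is genuinely the subtlety to watch: $\nsH_r$ is not a bialgebra, $\nsbr{\Delta}$ lands only in $\H_r \tsr \H_r$, and the Sweedler legs $a_i, b_i, c_j, d_j$ live in $\H_r$; but the argument never needs them to lie in $\nsH_r$ --- it only uses multiplicativity of $\nsbr{\Delta}$ into the ambient $\H_r \tsr \H_r$ and the fact that $\nsbr{\epsilon}_{\pm}$ are characters of $\nsH_r$.

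One thing to note for the comparison you were asked about: the present paper does not prove this proposition; it cites it to \cite{Bnsbraid}, so there is no in-paper proof against which to measure your approach. The route you chose (verify on generators, then extend by the convolution identity) is the standard and almost certainly intended one. A small suggestion for polish: when you write ``apply the hypothesis on $y$,'' spell out that after factoring out the scalar $\nsbr{\epsilon}_+(x)$, what remains is $\sum_j 1^{\text{op}}(c_j)d_j = \nsbr{\epsilon}_+(y)\cdot 1$; also make explicit that the induction is on the length of words in the generators $\sP_s$ (with $x = 1$ handled separately), since general elements of $\nsH_r$ are $\mathbf{A}$-linear combinations of such words and the identity extends by linearity.
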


\subsection{Some representation theory of  $\nsH_r$}
\label{ss some representation theory of nsH}
It is shown in \cite{BMSGCT4} (Proposition 11.8) that $\field \nsH_r$ is semisimple.

\begin{remark}
It is reasonable to suspect that  $\field\nsH_r$ is split semisimple, and indeed, our computations are consistent with this being true.  In this paper we show that the nonstandard Temperley-Lieb algebra $\field\nsH_{r,2}$ is split semisimple by explicitly determining its irreducibles.  We are curious if there is a way to show that  $ \field \nsH_{r}$ is split semisimple without explicitly determining its irreducibles.
\end{remark}

There are one-dimensional trivial and sign representations of $\nsH_r$, which we denote by $\nsbr{\epsilon}_{+}$ and $\nsbr{\epsilon}_{-}$:
\[ \begin{array}{cccc}
  \nsbr{\epsilon}_{+} : \sP_{s} \mapsto \two^{2}, && \nsbr{\epsilon}_{-} : \sP_s \mapsto 0, & s \in S.
\end{array} \]

For  $\lambda, \mu \vdash r$, the $\nsH_r$-module $\Res_{\nsH_r} M^{\mathbf{A}}_\lambda \tsr M^{\mathbf{A}}_\mu \cong \Res_{\nsH_r} M^{\mathbf{A}}_\mu \tsr M^{\mathbf{A}}_\lambda$ is denoted\footnote{The more correct notation $\nsbr{M}^{\mathbf{A}}_{\{\lambda, \mu\}}$ is used in \cite{BMSGCT4}, but in this paper the shorter $\nsbr{M}^{\mathbf{A}}_{\lambda, \mu}$ is preferable for being less cumbersome.} $\nsbr{M}^{\mathbf{A}}_{\lambda, \mu}$. Let $\nssym{2}{M}^{\mathbf{A}}_\lambda$ (resp. $\nswedge{2}{M}^{\mathbf{A}}_\lambda$) denote the $\nsH_r$-module $\Res_{\nsH_r} S^2 M^{\mathbf{A}}_\lambda$ (resp. $\Res_{\nsH_r} \Wedge^2 M^{\mathbf{A}}_\lambda$), where $S^2 M^{\mathbf{A}}_\lambda$ and $\Wedge^2 M^{\mathbf{A}}_\lambda$ are as in Proposition-Definition \ref{p S2Hr representations}.
Let $\nsbr{M}_{\lambda,\mu}$, $\nssym{2}{M}_\lambda, \nswedge{2}{M}_\lambda$ denote the corresponding $\field \nsH_r$-modules.

Let
\be \label{e triv inclusion map}
\mathbf{A} \xrightarrow{I} (M^\mathbf{A}_{\lambda})^\dualone \tsrvw M^\mathbf{A}_{\lambda}
\ee
be the canonical inclusion given by sending  $1 \in \mathbf{A}$ to  $I \in \End(M^\mathbf{A}_\lambda) \cong (M^\mathbf{A}_{\lambda})^\dualone \tsrvw M^\mathbf{A}_{\lambda}$.
Let
\be \label{e triv surjection map}
M^\mathbf{A}_{\lambda} \tsrvw (M^\mathbf{A}_{\lambda})^\dualone \xrightarrow{\trace} \mathbf{A}
\ee
be the canonical surjection.

We then have the following $\nsH_r$-module homomorphisms
\begin{align*}
\nsbr{\epsilon}_+ &\xrightarrow{I} (M^\mathbf{A}_{\lambda})^\dualone \tsrvw M^\mathbf{A}_{\lambda}, \\
\ker(\trace) &\hookrightarrow  M^\mathbf{A}_{\lambda} \tsrvw (M^\mathbf{A}_{\lambda})^\dualone \xrightarrow{\trace} \nsbr{\epsilon}_+,
\end{align*}
To see this, note that in general, if  $M$ is an  $H$-module and  $H$ is a Hopf algebra with counit  $\epsilon$, then it follows from the axiom for the antipode that $\epsilon \xrightarrow{I} M^* \tsr M$ and  $M \tsr M^*  \xrightarrow{\trace} \epsilon$ are  $H$-module homomorphisms.
The same proof works in the present setting using Proposition \ref{p hecke algebra antipode} in place of the antipode axiom.

Since $\frac{1}{|\text{SYT}(\lambda)|} \tau \circ I$ is a splitting of $\trace$ and  $(M^\mathbf{A}_{\lambda})^\dualone \cong M^\mathbf{A}_{\lambda}$ (Proposition \ref{p dual basis to C'} (iii)), we obtain the decomposition of $\nsH_r$-modules
\be  \label{e nsH trace}
\ker(\trace) \oplus \nsbr{\epsilon}_+ \cong \nsbr{M}^\mathbf{A}_{\lambda,\lambda}.
\ee
Moreover, as a consequence of Proposition \ref{p triv in Mdual M} (i) below, $\nsbr{\epsilon}_+ \subseteq \nsbr{M}^\mathbf{A}_{\lambda, \lambda}$ lies in
$\nssym{2}{M}_\lambda^\mathbf{A}$.
Then define  $S' \nsbr{M}^\mathbf{A}_\lambda := \ker(\trace) \cap \nssym{2}{M}^\mathbf{A}_\lambda$.  The decomposition \eqref{e nsH trace} yields the decomposition
\be  \label{e nsH S' iso}
\nssym{2}{M}^\mathbf{A}_\lambda \cong S' \nsbr{M}^\mathbf{A}_\lambda \oplus \nsbr{\epsilon}_+.
\ee

\begin{proposition}
\label{p triv in Mdual M}
The maps \eqref{e triv inclusion map} and \eqref{e triv surjection map} as well as the analogous maps for $ \nsbr{\epsilon}_-$ can be made explicit using upper and lower canonical bases:

\begin{list}{\emph{(\roman{ctr})}} {\usecounter{ctr} \setlength{\itemsep}{1pt} \setlength{\topsep}{2pt}}
\item The inclusion $\nsbr{\epsilon}_+ \hookrightarrow M_\lambda^\mathbf{A} \tsr M_\lambda^\mathbf{A}$ is given by
\[ 1 \mapsto \sum_{Q \in \text{SYT}(\lambda)} C_Q \tsr \C_Q = \sum_{Q \in \text{SYT}(\lambda)} \C_Q \tsr C_Q.\]
\item The surjection $M_\lambda^\mathbf{A} \tsr M_\lambda^\mathbf{A} \to \nsbr{\epsilon}_+$ is given by
\[ \sum_{T, U \in \text{SYT}(\lambda)} a^{T U} \C_T \tsr C_U \mapsto \textstyle \frac{1}{|\text{SYT}(\lambda)|} \displaystyle \sum_{U \in \text{SYT}(\lambda)} a^{U U}, \text{ for any $a^{T U} \in \mathbf{A}$.}\]
\item The inclusion $\nsbr{\epsilon}_- \hookrightarrow M_{\lambda'}^\mathbf{A} \tsr M_\lambda^\mathbf{A}$ is given by
\[ 1 \mapsto \sum_{Q \in \text{SYT}(\lambda)} (-1)^{\ell(Q)}\C_{\transpose{Q}} \tsr \C_{Q}.\]
\item The surjection $M_{\lambda}^\mathbf{A} \tsr M_{\lambda'}^\mathbf{A} \to \nsbr{\epsilon}_-$ is given by
\[ \sum_{T, U \in \text{SYT}(\lambda)} a^{T \transpose{U}} \C_T \tsr \C_{\transpose{U}} \mapsto \textstyle \frac{1}{|\text{SYT}(\lambda)|} \displaystyle \sum_{U \in \text{SYT}(\lambda)} (-1)^{\ell(U)} a^{U \transpose{U}}, \text{ for any $a^{T \transpose{U}} \in \mathbf{A}$.}\]
\end{list}
\end{proposition}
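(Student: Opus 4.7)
The plan is to make the canonical inclusion $I$ of \eqref{e triv inclusion map}, the canonical surjection $\trace$ of \eqref{e triv surjection map}, and their $\nsbr{\epsilon}_-$-analogues completely explicit by composing with the basis-level isomorphisms supplied by Proposition~\ref{p dual basis to C'}. In each part the underlying input is the identity endomorphism of $M_\lambda^{\mathbf{A}}$, written as a tensor of dual bases, which is transported through the appropriate isomorphism.

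For part (i), I would begin with the basis-independent expression $\mathrm{id}_{M_\lambda^{\mathbf{A}}} = \sum_Q \dual{\C_Q} \tsrvw \C_Q$ inside $(M_\lambda^{\mathbf{A}})^\dualone \tsrvw M_\lambda^{\mathbf{A}}$. Applying $\alpha_\dualone \tsrvw 1$ and using $\alpha_\dualone(\dual{\C_Q}) = C_Q$ from Proposition~\ref{p dual basis to C'}(iii) yields immediately $\iota_+(1) = \sum_Q C_Q \tsrvw \C_Q$. Writing $\C_Q = \sum_T M_{TQ} C_T$, the second equality $\sum_Q C_Q \tsrvw \C_Q = \sum_Q \C_Q \tsrvw C_Q$ is equivalent to the symmetry $M_{TQ} = M_{QT}$ of the transition matrix. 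I would deduce this symmetry by a Schur argument applied to the bilinear form $\beta(v, w) := \langle v, \alpha_\dualone^{-1}(w)\rangle$, which satisfies $\beta(\C_Q, C_T) = \delta_{QT}$. The $\H_r$-equivariance of $\alpha_\dualone^{-1}$ makes $\beta$ invariant in the sense $\beta(vh, w) = \beta(v, w \cdot 1^{\text{op}}(h))$, and the transpose $\beta^T$ is invariant in the same sense. Since $M_\lambda$ is absolutely irreducible over $\field\H_r$ and $M_\lambda \cong (M_\lambda)^\dualone$, the space of such forms is one-dimensional by Schur, so $\beta^T = c\beta$ with $c = \pm 1$. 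Theorem~\ref{t transition C' to C} gives $M \equiv I \pmod{\u}$, hence $\beta(\C_Q, C_Q)|_{\u=0} = \beta(C_Q, \C_Q)|_{\u=0} = 1$, ruling out $c = -1$.

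For part (ii), I would compose $\trace: M_\lambda^{\mathbf{A}} \tsrvw (M_\lambda^{\mathbf{A}})^\dualone \to \mathbf{A}$ with $1 \tsrvw \alpha_\dualone^{-1}$. Since $\alpha_\dualone^{-1}(C_U) = \dual{\C_U}$, an element $\sum a^{TU} \C_T \tsrvw C_U$ maps to $\sum a^{TU} \dual{\C_U}(\C_T) = \sum_U a^{UU}$; the prefactor $\tfrac{1}{|\text{SYT}(\lambda)|}$ is the scaling forced by the requirement $\iota_+(1) \mapsto 1 \in \nsbr{\epsilon}_+$, since $\trace(\mathrm{id}_{M_\lambda^{\mathbf{A}}}) = \dim M_\lambda^{\mathbf{A}} = |\text{SYT}(\lambda)|$.

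Parts (iii) and (iv) go through in complete parallel, using Proposition~\ref{p dual basis to C'}(iv) in place of (iii). The key computation is that $\alpha_\dualtheta(\dual{\C_T}) = (-1)^{\ell(T)} \C_{\transpose{T}}$ for $T \in \text{SYT}(\lambda)$, which produces the sign $(-1)^{\ell(Q)}$ in (iii) and $(-1)^{\ell(U)}$ in (iv). Because each of (iii) and (iv) asserts only a single tensor expression, no analogue of the Schur-based symmetry step is required. The main obstacle throughout is the second equality of (i): it is the only step that does not follow by direct manipulation from Proposition~\ref{p dual basis to C'} and requires invoking the irreducibility of $M_\lambda$ through Schur's lemma.
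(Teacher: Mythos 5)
Your argument is correct, and parts (ii), (iii), (iv) together with the first equality in (i) follow the paper's own route: the identity endomorphism of $M_\lambda^{\mathbf{A}}$ is written in dual bases and transported through $\alpha_\dualone$ (or $\alpha_\dualtheta$) from Proposition~\ref{p dual basis to C'}. The genuine divergence is in the second equality of (i). The paper observes that both $I$ and $\tau\circ I$ are $\nsH_r$-module maps $\nsbr{\epsilon}_+\to\nsbr{M}_{\lambda,\lambda}$ (since $\nsH_r\subseteq S^2\H_r$ is $\tau$-invariant), uses that $\field\nsbr{\epsilon}_+$ has multiplicity one in $\nsbr{M}_{\lambda,\lambda}$ to conclude the two vectors are proportional, and then specializes at $\u=0$ via Theorem~\ref{t transition C' to C} to see the scalar is $1$. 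You instead reduce the equality to symmetry of the transition matrix between $\Gamma'_\lambda$ and $\Gamma_\lambda$ and prove that symmetry by a Schur argument on $\H_r$-invariant bilinear forms (equivalently, one-dimensionality of $\hom_{\field\H_r}(M_\lambda,M_\lambda^\dualone)$), again pinning down the sign at $\u=0$ via Theorem~\ref{t transition C' to C}. Both are multiplicity-one arguments closed off by a $\u=0$ specialization, so they are structurally parallel, but yours stays entirely inside ordinary $\H_r$-representation theory and never invokes the nonstandard Hecke algebra for this step; as a byproduct it records the clean (and reusable) fact that the lower-to-upper canonical basis transition matrix of $M_\lambda$ is symmetric. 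One small point to make explicit: in (iii) and (iv) the paper also cites Proposition~\ref{p hecke algebra antipode}, which is what guarantees that the analogues of $I$ and $\trace$ for $\nsbr{\epsilon}_-$ are $\nsH_r$-module homomorphisms in the first place; your phrase ``go through in complete parallel'' presumably subsumes this, but it deserves a sentence.
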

Note that since  $\nsbr{\epsilon}_+ \subseteq \nssym{2}{M}^\mathbf{A}_\lambda$ by (i), (ii) remains valid with  $C_U \tsr \C_T$ in place of $\C_T \tsr C_U$.
\begin{proof}
The map  $I$ of \eqref{e triv inclusion map} is given by
\[\textstyle  1 \mapsto \sum_{Q \in \text{SYT}(\lambda)} \dual{\C_Q} \tsr \C_Q.\]
Applying the isomorphism of Proposition \ref{p dual basis to C'} (iii) then yields (i), except the equality.  The equality in (i) follows from the fact that
$\tau \circ I: \nsbr{\epsilon}_+ \hookrightarrow M_\lambda^\mathbf{A} \tsr (M_\lambda^\mathbf{A})^\dualone$ is an $\nsH_r$-module homomorphism (since $\nsH_r \subseteq S^2 \H_r$), the multiplicity of  $\field \nsbr{\epsilon}_+$ in  $\nsbr{M}_{\lambda,\lambda}$ is 1, and Theorem \ref{t transition C' to C}.

The map $\trace$ of \eqref{e triv surjection map} is given by
\[\textstyle  \sum_{T, U \in \text{SYT}(\lambda)} a^{T U} \C_T \tsr \dual{\C_U} \mapsto \frac{1}{|\text{SYT}(\lambda)|} \sum_{U} a^{U U}, \text{ for any $a^{T U} \in \mathbf{A}$}, \]
so (ii) also follows from Proposition \ref{p dual basis to C'} (iii).
Statements (iii) and (iv) are proved in a similar way using Proposition \ref{p hecke algebra antipode} and Proposition \ref{p dual basis to C'} (iv).
\end{proof}

\subsection{The action of $\sP_s$ on  $M_\lambda \tsr M_\mu$}
\label{ss sP action on bases}
For the proof of the main theorem, it is convenient to record the action of $\sP_s$ on  $M_\lambda \tsr M_\mu$ in
the bases
\[ \Gamma'_\lambda \tsr \Gamma'_\mu = \{ \C_T \tsr \C_U : T \in \text{SYT}(\lambda), \ U \in \text{SYT}(\mu) \},\]
$\Gamma_\lambda \tsr \Gamma'_\mu$, and $\Gamma_\lambda \tsr \Gamma_\mu$.  These calculations are easily made using (\ref{e prime C on prime canbas}) and (\ref{e C on canbas}).
\refstepcounter{equation}
\begin{align*}
&(\C_T \tsr \C_U) \sP_s = \tag{\theequation}\label{e sP on C' C'} \\
& \ \begin{cases}
\f \C_T \tsr \C_U & \text{if } s \in R(\C_T) \text{ and } s \in R(\C_U) \\
[2] \displaystyle \sum_{s \in R(\C_{U'})} \mu(U', U) \C_{T} \tsr \C_{U'} & \text {if } s \in R(\C_T) \text{ and } s \notin R(\C_U) \\
[2] \displaystyle \sum_{s \in R(\C_{T'})} \mu(T', T) \C_{T'} \tsr \C_U & \text {if } s \notin R(\C_T) \text{ and } s \in R(\C_U) \\
\f \C_T \tsr \C_U \\
-[2] \left( \displaystyle \sum_{s \in R(\C_{T'})} \mu(T', T) \C_{T'} \tsr \C_U + \sum_{s \in R(\C_{U'})} \mu(U', U) \C_{T} \tsr \C_{U'} \right) \\
+ 2 \displaystyle \sum_{s \in R(\C_{T'}), s \in R(\C_{U'})} \mu(T', T) \mu(U', U) \C_{T'} \tsr \C_{U'} & \text{if } s \notin R(\C_U) \text{ and } s \notin R(\C_T) \\
\end{cases}
\end{align*}
\be \label{e sP on C C'}
(C_T \tsr \C_U) \sP_s =
\begin{cases}
0 & \text{if } s \in R(C_T) \text{ and } s \in R(\C_U) \\
\f C_T \tsr \C_U - [2] \sum_{s \in R(\C_{U'})} \mu(U', U) C_T \tsr \C_{U'} & \text {if } s \in R(C_T) \text{ and } s \notin R(\C_U) \\
\f C_T \tsr \C_U + [2] \sum_{s \in R(C_{T'})} \mu(T', T) C_{T'} \tsr \C_U & \text {if } s \notin R(C_T) \text{ and } s \in R(\C_U) \\
-[2] \sum_{s \in R(C_{T'})} \mu(T', T) C_{T'} \tsr \C_U \\
+[2] \sum_{s \in R(\C_{U'})} \mu(U', U) C_{T} \tsr \C_{U'} \\
+ 2 \sum_{s \in R(C_{T'}), s \in R(\C_{U'})} \mu(T', T) \mu(U', U) C_{T'} \tsr \C_{U'} & \text{if } s \notin R(C_T) \text{ and } s \notin R(\C_U) \\
\end{cases}
\ee
\refstepcounter{equation}
\begin{align*}
&(C_T \tsr C_U) \sP_s = \tag{\theequation}\label{e sP on C C} \\
& \  \begin{cases}
\f C_T \tsr C_U & \text{if } s \in R(C_T) \text{ and } s \in R(C_U) \\
-[2] \displaystyle \sum_{s \in R(C_{U'})} \mu(U', U) C_{T} \tsr C_{U'} & \text {if } s \in R(C_T) \text{ and } s \notin R(C_U) \\
-[2] \displaystyle \sum_{s \in R(C_{T'})} \mu(T', T) C_{T'} \tsr C_U & \text {if } s \notin R(C_T) \text{ and } s \in R(C_U) \\
\f C_T \tsr C_U \\
+[2] \left( \displaystyle \sum_{s \in R(C_{T'})} \mu(T', T) C_{T'} \tsr C_U + \sum_{s \in R(C_{U'})} \mu(U', U) C_{T} \tsr C_{U'} \right) \\
+2 \displaystyle \sum_{s \in R(C_{T'}), s \in R(C_{U'})} \mu(T', T) \mu(U', U) C_{T'} \tsr C_{U'} & \text{if } s \notin R(C_U) \text{ and } s \notin R(C_T) \\
\end{cases}
\end{align*}

\section{Irreducibles of $\nsH_{r,2}$}
\label{s irreducibles of nshr2}
Define the \emph{Temperley-Lieb} algebra $\H_{r,d}$ to be the quotient of $\H_r$ by the two-sided ideal
\[
\bigoplus_{\stackrel{\lambda \vdash r,\ \ell(\lambda) > d,}{P\in \text{SYT}(\lambda)}} \mathbf{A}\Gamma_P = \mathbf{A}\{C_w:\ell(\sh(P(w))) > d\}.
\]
Define the \emph{nonstandard Temperley-Lieb} algebra $\nsH_{r,d}$ to be the subalgebra of $\H_{r,d}\tsrvw \H_{r,d}$ generated by the elements
$\sP_{s} := \C_s \tsrvw \C_s + C_s \tsrvw C_s, \ s \in S$.

Let $\mathscr{P}_{r,2}$ be the set of partitions of size $r$ with at most two parts and $\mathscr{P}'_{r,2}$ be the subset of $\mathscr{P}_{r,2}$ consisting of those partitions that are not a single row or column shape. Define the index set $\nsP_{r,2}$ for the $\field \nsH_{r,2}$-irreducibles as follows:
\be\label{e definition nsp r2}
\nsP_{r,2} =  \{ \{\lambda,\mu\}: \lambda, \mu \in \mathscr{P}_{r,2}, \, \lambda \neq \mu\} \sqcup\{+\lambda: \lambda \in \mathscr{P}'_{r,2}\} \sqcup \{-\lambda: \lambda \in \mathscr{P}'_{r,2}\} \sqcup \{\nsbr{\epsilon}_+\}.
\ee

This section is devoted to a proof of the main result of this paper:
\begin{theorem} \label{t nsH irreducibles two row case}
The algebra $\field\nsH_{r,2}$ is split semisimple and the list of distinct irreducibles is
\begin{list}{\emph{(\arabic{ctr})}} {\usecounter{ctr} \setlength{\itemsep}{1pt} \setlength{\topsep}{2pt}}
\item $\nsbr{M}_\alpha := \nsbr{M}_{\lambda,\mu} = \Res_{\field \nsH_{r,2}}M_\lambda \tsr M_\mu$, for $\alpha = \{\lambda, \mu\} \in \nsP_{r,2}$,
\item $\nsbr{M}_\alpha := S' \nsbr{M}_\lambda$, for $\alpha = +\lambda \in \nsP_{r,2}$,
\item $\nsbr{M}_\alpha := \nswedge{2 }{M}_\lambda$, for $\alpha = -\lambda \in \nsP_{r,2}$,
\item $\nsbr{M}_\alpha := \field\nsbr{\epsilon}_+$, for $\alpha = \nsbr{\epsilon}_+\in \nsP_{r,2}$.
\end{list}
Moreover, the irreducible $\field (\H_{r,2} \tsr \H_{r,2})$-modules decompose into $\field \nsH_{r,2}$-irreducibles as follows
\[
\begin{array}{ll}
M_{\lambda} \tsr M_{\mu} \cong \nsbr{M}_{\lambda, \mu} & \text{if } \lambda \neq \mu, \\
M_{\lambda} \tsr M_{\lambda} \cong S' \nsbr{M}_{\lambda} \oplus \nswedge{2}{M}_{\lambda} \oplus \field \nsbr{\epsilon}_+ & \lambda \vdash r. \\
\end{array}
\]
\end{theorem}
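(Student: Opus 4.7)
The plan is to prove the theorem by induction on $r$, with the small cases $r\leq 3$ or $r \leq 4$ checked directly (using the known descriptions of $\field \nsH_3$ and $\field \nsH_4$ mentioned in the paper). For the decomposition claim in the second half of the theorem, the work is already essentially done: $M_\lambda \tsr M_\mu$ with $\lambda\neq\mu$ is already irreducible over the larger algebra $\field S^2 \H_r$ by Proposition-Definition \ref{p S2Hr representations}, and in the $\lambda=\mu$ case the decomposition $M_\lambda\tsr M_\lambda \cong S^2 M_\lambda \oplus \Wedge^2 M_\lambda$ as $\field S^2 \H_r$-modules, combined with \eqref{e nsH S' iso}, produces the three claimed summands. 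So the real content lies in showing that each of the four candidate modules in (1)--(4) is irreducible as an $\field \nsH_{r,2}$-module.

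For the inductive step, let $N$ denote a candidate $\nsH_{r,2}$-module. First I would restrict $N$ to the subalgebra $\field \nsH_{r-1,2}$ generated by $\sP_1,\dots,\sP_{r-2}$, use the branching rule $\Res_{\H_{r-1,2}} M_\lambda = \bigoplus_i M_{\lambda-a_i}$ (summed over the outer corners $a_i$ keeping the shape in $\mathscr{P}_{r-1,2}$, i.e.\ at most two), and apply the induction hypothesis to decompose $\Res N$ into $\field \nsH_{r-1,2}$-irreducibles. The crucial feature to verify at each step is that this restriction is multiplicity-free; this is exactly the statement the introduction advertises. Once multiplicity-freeness is in hand, any $\field \nsH_{r,2}$-submodule of $N$ is forced to be a sum of entire $\field\nsH_{r-1,2}$-isotypic components, and the irreducibility of $N$ reduces to showing that $\sP_{r-1}$ connects these components.

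To establish these connections I would compute $\sP_{r-1}$ on carefully chosen basis vectors using the formulae \eqref{e sP on C' C'}--\eqref{e sP on C C}, then project onto the $\field \nsH_{r-1,2}$-isotypic pieces using the projection lemma (Lemma \ref{l projections are not too tricky}) together with Lemma \ref{l lift transition matrix}. The input to these computations is the structure of the $\S_r$-graphs $\Gamma_\lambda,\Gamma'_\lambda$ restricted to $J=\{s_1,\dots,s_{r-2}\}$: by Corollary \ref{c geck relative a invariant} these restrictions are totally ordered, and by \eqref{e DKE complete graph} there are dual Knuth edges at position $r-1$ linking tableaux whose entry $r$ sits in different outer corners. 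Since $T\dkt{r-1} T'$ forces $\mu(T',T)=1$, the formulae \eqref{e sP on C' C'}--\eqref{e sP on C C} produce explicit nonzero matrix entries of $\sP_{r-1}$; working modulo $\u \L_{\lambda-a_i}$ and appealing to Theorem \ref{t transition C' to C} keeps these nonzero and shows $\sP_{r-1}$ genuinely moves a vector lying in the $\nsH_{r-1,2}$-isotypic component indexed by one corner into the one indexed by the other. Applied to each candidate in (1)--(4), this yields irreducibility. Distinctness then follows by comparing the $\nsH_{r-1,2}$-restrictions supplied by the induction hypothesis, and completeness of the list follows by matching $\sum_\alpha (\dim \nsbr{M}_\alpha)^2$ with $\dim \field \nsH_{r,2}$ (computed in \textsection\ref{s enumerative consequence}).

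The main obstacle I anticipate is the bookkeeping in the $\sP_{r-1}$ computation: the formulae have four cases each and the lower/upper canonical bases must be mixed to exploit Lemma \ref{l projections are not too tricky}, which requires one choice of basis for ``$\geq a_i$'' components and the other for ``$\leq a_i$'' components. Choosing the right mixed basis for each candidate module, and identifying a single basis vector in each $\nsH_{r-1,2}$-isotypic component whose image under $\sP_{r-1}$ can be projected cleanly, is where the combinatorial care is needed. The restriction to the two-row Temperley-Lieb setting makes this manageable because every partition in $\mathscr{P}_{r,2}$ has at most two outer corners, so the total order from Corollary \ref{c geck relative a invariant} has length at most two and the DE graph on $\text{SYT}(\lambda)$ has a particularly simple structure.
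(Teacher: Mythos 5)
Your overall strategy matches the paper's: induct on $r$, restrict each candidate module to $\field\nsH_{r-1,2}$, invoke the induction hypothesis to identify the isotypic components, and then ``glue'' them by exhibiting nonzero matrix entries of $\sP_{r-1}$ via the formulae \eqref{e sP on C' C'}--\eqref{e sP on C C}, the dual Knuth edges of \eqref{e DKE complete graph}, the projection lemma, and the filtration from Corollary \ref{c geck relative a invariant}. The paper packages the gluing step into four separate propositions (one per candidate type), but this is bookkeeping, not a conceptual difference, and your outline of the gluing mechanism is sound.

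There are two genuine gaps. First, your completeness argument is circular: you propose to match $\sum_\alpha (\dim\nsbr{M}_\alpha)^2$ against the dimension of $\field\nsH_{r,2}$ computed in \textsection\ref{s enumerative consequence}, but Corollary \ref{c enumerative 2 row case} is itself \emph{derived} from Theorem \ref{t nsH irreducibles two row case}; the paper has no independent computation of $\dim\field\nsH_{r,2}$. The correct route is the one the paper uses: $\nsH_{r,2}$ is a subalgebra of $\H_{r,2}\tsr\H_{r,2}$, so the restriction of $\field(\H_{r,2}\tsr\H_{r,2})$ to $\field\nsH_{r,2}$ is a faithful module, and every $\field\nsH_{r,2}$-irreducible must therefore appear as a constituent of some $\nsbr{M}_{\lambda,\mu}$; since all those constituents are already on your list, the list is complete.

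Second, the one-dimensional component $\field\nsbr{\epsilon}_+$ sitting inside $\nsbr{M}_{\lambda,\mu}$ (when $\nu=\lambda-a_k=\mu-b_l$) or inside $S'\nsbr{M}_\lambda$ does not glue to the rest by the same direct $\sP_{r-1}$ computation, and the paper handles it with a separate specialization argument: if $\field\nsbr{\epsilon}_+$ did not glue, then $\nsbr{\epsilon}_+|_{\u=1}$ would be a one-dimensional $\QQ\S_r$-submodule that restricts to the trivial $\QQ\S_{r-1}$-module, forcing it to be the trivial $\QQ\S_r$-module, which cannot occur inside $M_\lambda|_{\u=1}\tsr M_\mu|_{\u=1}$ for $\lambda\neq\mu$, nor inside $S'\nsbr{M}_\lambda|_{\u=1}$. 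Your outline says only ``show that $\sP_{r-1}$ connects these components,'' which covers the other constituents but silently skips this piece, so you should add this specialization step. Finally, you do not address split semisimplicity; the paper obtains it for free by observing that the gluing arguments work over any field extension of $\field$, and you should say this as well.
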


\subsection{Gluing $\field \nsH_{r-1}$-irreducibles} \label{ss gluing field nsH irreducibles}
\begin{proposition} \label{p r-1 restrictions}
The four types of $\field \nsH_{r}$-modules from Theorem \ref{t nsH irreducibles two row case} decompose into $\field \nsH_{r-1}$-modules as follows:
\begin{list}{(\arabic{ctr})} {\usecounter{ctr} \setlength{\itemsep}{1pt} \setlength{\topsep}{2pt}}
\item[\emph{(1a)}] $\Res_{\field\nsH_{r-1}} \nsbr{M}_{\lambda, \mu} \cong \bigoplus_{i \in [k_\lambda], j \in  [k_\mu]} \nsbr{M}_{\lambda - a_i, \mu - b_j}$, if $|\lambda \cap \mu| < r-1$.
\item[\emph{(1b)}] $\Res_{\field\nsH_{r-1}} \nsbr{M}_{\lambda, \mu} \cong \bigoplus_{\substack{i \in [k_\lambda], j \in [k_\mu], \\ (i, j) \neq (k, l)}}  \nsbr{M}_{\lambda - a_i, \mu - b_j} \oplus S' \nsbr{M}_{\nu} \oplus \nswedge{2}{M}_\nu \oplus \field \nsbr{\epsilon}_+$, where  $\nu = \lambda - a_k = \mu - b_l$.
\item[\emph{(2)}] $\Res_{\field\nsH_{r-1}} S' \nsbr{M}_{\lambda} \cong \bigoplus_{1 \leq i < j \leq k_\lambda} \nsbr{M}_{\lambda - a_i, \lambda - a_j} \oplus \bigoplus_{i \in [k_\lambda]} S' \nsbr{M}_{\lambda - a_i} \oplus \field \nsbr{\epsilon}_+^{\oplus k_\lambda -1}$, for $\lambda \in \mathscr{P}'_r$.
\item[\emph{(3)}] $\Res_{\field\nsH_{r-1}} \nswedge{2}{M}_{\lambda} \cong \bigoplus_{1\leq i < j \leq k_\lambda} \nsbr{M}_{\lambda - a_i, \lambda - a_j} \oplus \bigoplus_{i \in [k_\lambda]} \nswedge{2}{M}_{\lambda - a_i}$, for $\lambda \in \mathscr{P}'_r$.
\item[\emph{(4)}] $\Res_{\field\nsH_{r-1}} \field \nsbr{\epsilon}_+ \cong \field \nsbr{\epsilon}_+$.
\end{list}
\end{proposition}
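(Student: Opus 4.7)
The plan is to reduce everything to the standard $\H_r$-to-$\H_{r-1}$ branching rule $\Res_{\H_{r-1}} M_\lambda \cong \bigoplus_{i \in [k_\lambda]} M_{\lambda - a_i}$ applied on each tensor factor. Since $\nsH_{r-1}$ sits inside $\H_{r-1} \tsr \H_{r-1}$, any direct sum decomposition of a $\H_{r-1} \tsr \H_{r-1}$-module is automatically a decomposition of $\field\nsH_{r-1}$-modules; this is the principal tool.

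For items (1a) and (1b), applying the branching rule to both factors gives
\[ \Res_{\H_{r-1} \tsr \H_{r-1}} (M_\lambda \tsr M_\mu) \;\cong\; \bigoplus_{i \in [k_\lambda],\, j \in [k_\mu]} M_{\lambda - a_i} \tsr M_{\mu - b_j}. \]
When $\lambda - a_i \neq \mu - b_j$ the summand is by definition the $\nsH_{r-1}$-module $\nsbr{M}_{\lambda-a_i,\,\mu-b_j}$. The equality $\lambda - a_i = \mu - b_j$ forces $\lambda$ and $\mu$ to agree outside a single cell, i.e.\ $|\lambda \cap \mu| = r-1$, and in that event there is a unique such pair $(k,l)$ with $\nu := \lambda - a_k = \mu - b_l$; for this summand the decomposition $M_\nu \tsr M_\nu \cong S^2 M_\nu \oplus \Wedge^2 M_\nu$ together with \eqref{e nsH S' iso} produces the three advertised pieces $S'\nsbr{M}_\nu \oplus \nswedge{2}{M}_\nu \oplus \field \nsbr{\epsilon}_+$.

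For (2), symmetric squares commute with the branching rule, yielding, as $S^2\H_{r-1}$-modules,
\[ \Res_{\H_{r-1}} S^2 M_\lambda \;\cong\; \bigoplus_{i \in [k_\lambda]} S^2 M_{\lambda - a_i} \,\oplus\, \bigoplus_{1 \le i < j \le k_\lambda} M_{\lambda - a_i} \tsr M_{\lambda - a_j}. \]
Apply \eqref{e nsH S' iso} to each $S^2 M_{\lambda - a_i}$ to split off one $\field\nsbr{\epsilon}_+$ per outer corner, then delete one further trivial summand corresponding to the copy of $\field \nsbr{\epsilon}_+$ sitting inside $\nssym{2}{M}_\lambda$; this leaves exactly $k_\lambda - 1$ trivial summands. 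The proof of (3) is entirely parallel with $\Wedge^2$ replacing $S^2$ and with no trivial summand to subtract, and statement (4) is immediate. Shapes $\lambda - a_i$ that degenerate to a single row or column contribute $S'\nsbr{M}_{\lambda - a_i} = 0 = \nswedge{2}{M}_{\lambda - a_i}$ and drop silently. There is no real obstacle here: once the branching rule and the formal identities $\Res \circ S^2 \cong S^2 \circ \Res$ and $\Res \circ \Wedge^2 \cong \Wedge^2 \circ \Res$ are in hand, the proposition is a bookkeeping exercise, the only care being the combinatorial identification of when the coincidence $\lambda - a_i = \mu - b_j$ can occur in (1a)/(1b) and the correct count of trivial summands in (1b) and (2).
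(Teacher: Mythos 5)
Your proof is correct and follows essentially the same route the paper takes: the $\H_{r-1}$-branching rule on each factor, the functorial compatibility of $S^2$ and $\Wedge^2$ with direct sums, and the splitting $\nssym{2}{M}_\nu \cong S'\nsbr{M}_\nu \oplus \field\nsbr{\epsilon}_+$ from \eqref{e nsH S' iso}. The one place you supply more detail than the paper does is the bookkeeping of trivial summands in item (2), and that count is right, including the degenerate case where some $\lambda - a_i$ is a single row or column (there $S^2 M_{\lambda - a_i}$ collapses to $\field\nsbr{\epsilon}_+$ and $S'\nsbr{M}_{\lambda - a_i}$ vanishes, exactly as your formula requires).
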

Note that if $|\text{SYT}(\nu)| = 1$, then $S' \nsbr{M}_\nu = \nswedge{2}{M}_\nu = 0$, so some zero modules appear in the right-hand sides. 
\begin{proof}
It is well known that $\Res_{\H_{r-1}} M_\lambda \cong \bigoplus_{i \in [k_\lambda]} M_{\lambda-a_i}$.  The decompositions (1a) and (1b) are clear from this and
\eqref{e nsH S' iso}.  Decomposition (3) follows from the general fact that $\Wedge(\bigoplus_{i \in [k]} M_i) \cong \Wedge(M_1)\tsr \ldots \tsr\Wedge(M_k)$ is a graded isomorphism of algebras for any vector spaces  $M_1,\ldots,M_k$, where $\Wedge(M)$ is the exterior algebra of  $M$.
The analogous fact holds for symmetric algebras, which implies
\[\Res_{\field S^2\H_{r-1}} S^2 M_{\lambda} \cong \bigoplus_{1 \leq i < j \leq k_\lambda} \nsbr{M}_{\lambda - a_i, \lambda - a_j} \oplus \bigoplus_{i \in [k_\lambda]} S^2 M_{\lambda - a_i}.\]
Decomposition (2) then follows from \eqref{e nsH S' iso}.
\end{proof}

We adopt the convention that restrictions from $\nsH_r$ to  $\nsH_{r-1}$ are considered with respect to the subalgebra of  $\nsH_r$ generated by $\sP_s$,  $s \in J$, where $J := \{s_1,\ldots,s_{r-2}\}$.
Given a  $\field \nsH_{r,d}$-irreducible $N$ and a $\field\nsH_{r,d}$-module $M$, let  $\proj_{N} : M \to M$ denote the  $\field \nsH_{r,d}$ projector with image the $N$-isotypic component of $M$. Given a  $\field \nsH_{r-1,d}$-irreducible  $N$ and a $\field\nsH_r$-module $M$, let  $\projres_{N} : M \to M$ denote the  $\field \nsH_{r-1,d}$ projector with image the $N$-isotypic component of  $\Res_{\field\nsH_{r-1,d}}M$.


Theorem \ref{t nsH irreducibles two row case} will be proved inductively, using the list of  $\field \nsH_{r-1,2}$-irreducibles and the fact that the restriction of a $\field \nsH_{r,2}$-irreducible to $\field \nsH_{r-1,2}$ is multiplicity-free.
Let  $\bigoplus_{i \in [k]} \nsbr{M}_i$ be a multiplicity-free decomposition of a  $ \field \nsH_{r}$-module $\nsbr{M}$ into  $\field \nsH_{r-1}$-irreducibles.
Then any  $\field \nsH_{r}$-submodule of  $ \nsbr{ M}$ is a direct sum of some of the  $\nsbr{M}_i$.  Suppose that  $ \nsbr{M}' = \bigoplus_{i \in I} \nsbr{M}_{i}$,  for some $I \subseteq [k]$, is
contained in a $ \field \nsH_{r}$-submodule of  $\nsbr{M}$.  We say that  $\nsbr{M}'$ \emph{glues} to  $\nsbr{M}_j$,  $j \notin I$, if  $\nsbr{M}_j \subseteq \nsbr{M}' (\field\nsH_r)$; this is equivalent to  $\projres_{\nsbr{M}_j}(x) \neq 0$ for some  $x \in \nsbr{M}' (\field\nsH_r)$.  Thus if we show that $\nsbr{M}_1$ glues to  $\nsbr{M}_2$,  $\nsbr{M}_1 \oplus \nsbr{M}_2$ glues to  $\nsbr{M}_3$, $\ldots$, $\bigoplus_{i \in [k-1]} \nsbr{M}_i$ glues to $\nsbr{M}_k$, then  this proves that $ \nsbr{M}$ is a $\field \nsH_{r}$-irreducible.  Slight variants of this argument will be used in the propositions in the next subsection.

\subsection{Four propositions on the irreducibility of $\field \nsH_r$-modules} \label{ss four propositions}
In this subsection we state and prove Propositions \ref{p case lambda mu generic}, \ref{p case lambda one from mu}, \ref{p case wedge lambda}, and \ref{p case sym lambda}, which will be used inductively to show that the $\field \nsH_{r,2}$-modules in (1)--(4) of Theorem \ref{t nsH irreducibles two row case} are irreducible.

\begin{proposition} \label{p case lambda mu generic}
Maintain the setup of  \textsection\ref{ss gluing field nsH irreducibles}. If $\lambda \neq \mu$ and  $\nsbr{M}_{\lambda-a_i, \mu-b_j}$ are distinct irreducible  $\field \nsH_{r-1}$-modules ($i \in [k_\lambda], j \in [k_\mu]$), then  $\nsbr{M}_{\lambda, \mu}$ is an irreducible $\field \nsH_r$-module.
\end{proposition}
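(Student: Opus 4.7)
The plan is a multiplicity-free gluing argument. By Proposition \ref{p r-1 restrictions}(1a) together with the hypothesis, $\Res_{\field\nsH_{r-1}}\nsbr{M}_{\lambda,\mu} \cong \bigoplus_{i,j} N_{ij}$ is a multiplicity-free direct sum of distinct $\field\nsH_{r-1}$-irreducibles, where $N_{ij} := \nsbr{M}_{\lambda-a_i,\mu-b_j}$. Therefore every nonzero $\field\nsH_r$-submodule of $\nsbr{M}_{\lambda,\mu}$ is a direct sum of some subset of the $N_{ij}$, and irreducibility reduces to connectivity of the ``gluing graph''---the graph on vertex set $[k_\lambda]\times[k_\mu]$ having an edge between $(i,j)$ and $(i',j')$ whenever $\projres_{N_{i'j'}}(N_{ij}\cdot\sP_{r-1}) \neq 0$. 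Since the flip $\tau$ commutes with the $\nsH_r$-action and exchanges the two tensor factors, it suffices to verify the gluing $N_{ij}\to N_{i'j}$ for all $i\neq i'$; iterating in both coordinates then links any $(i,j)$ to any $(i',j')$.

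Fix $i<i'$ in $[k_\lambda]$ and $j\in[k_\mu]$. By \eqref{e DKE complete graph}, choose $T,T'\in\text{SYT}(\lambda)$ related by a $\text{DKT}_{r-1}$-edge with $T_{a_i}=r,\,T_{a_{i'}}=r-1$ and $T'_{a_{i'}}=r,\,T'_{a_i}=r-1$. Then $\mu(T',T)=1$, $s_{r-1}\notin R(\C_T)$, and $s_{r-1}\in R(\C_{T'})$. Choose any $U\in\text{SYT}(\mu)$ with $U_{b_j}=r$. The test element is $z := (\liftCp_T)^J \tsr (\liftCp_U)^J \in N_{ij}$.

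Applying \eqref{e sP on C' C'} to $(\C_T\tsr\C_U)\sP_{r-1}$, the coefficient of $\C_{T'}\tsr\C_U$ is $\pm[2]\mu(T',T)=\pm[2]$ in either sub-case for $R(\C_U)$. Multiplying by $[2]^{-1}$ brings all $\C$-basis coefficients of the result into $\field_0$, so the projection lemma (Lemma \ref{l projections are not too tricky}) applies and gives
\[
\projres_{N_{i'j}}\!\Bigl(\tfrac{1}{[2]}(\C_T\tsr\C_U)\sP_{r-1}\Bigr) \equiv \pm\!\!\sum_{\substack{T''_{a_{i'}}=r\\ s_{r-1}\in R(\C_{T''})}}\!\mu(T'',T)\,(\liftCp_{T''})^J\!\tsr\!(\liftCp_U)^J \pmod{\u\,\L_{\lambda-a_{i'}}\tsr\L_{\mu-b_j}},
\]
where $\L_\nu$ is the $\field_0$-lattice of Theorem \ref{t transition C' to C}. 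By positivity (Theorem \ref{t positive coefficients}) the $\mu(T'',T)$ are nonnegative, and $\mu(T',T)=1$, so no cancellation occurs and this projection is nonzero modulo $\u$.

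The main obstacle is to absorb the difference $z-\C_T\tsr\C_U$, whose $\C$-basis coefficients lie in $\u\field_0\cap\ui\field_\infty$ by Lemma \ref{l lift transition matrix}. After pairing with $\tfrac{1}{[2]}\sP_{r-1}$---whose matrix entries have poles of order at most $2$ at $\u=0$ from the $[2]^2$ in \eqref{e sP on C' C'}---the resulting $\C$-basis coefficients lie in $\field_0\cap\field_\infty$, since $\tfrac{\u}{[2]}=\tfrac{\u^2}{1+\u^2}\in\field_0$ and $\tfrac{\ui}{[2]}=\tfrac{1}{1+\u^2}\in\field_\infty$. A valuation analysis at $\u=0$, exploiting the simultaneous zero-at-$0$ and zero-at-$\infty$ behavior of the transition coefficients, shows that this correction contributes to $\projres_{N_{i'j}}(\tfrac{1}{[2]}z\sP_{r-1})$ only in $\u(\L_{\lambda-a_{i'}}\tsr\L_{\mu-b_j})$, so the $\mu(T',T)=1$ leading value survives. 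This verifies the gluing $N_{ij}\to N_{i'j}$; symmetry via $\tau$ yields $N_{ij}\to N_{ij'}$ for all $j\neq j'$; and connectivity of the gluing graph forces $\nsbr{M}_{\lambda,\mu}$ to be an irreducible $\field\nsH_r$-module.
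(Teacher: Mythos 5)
Your overall strategy---a gluing argument, reducing irreducibility to connectivity of a gluing graph via $\sP_{r-1}$-action, computed with canonical bases and the projection lemma---is the same as the paper's, and the $\text{DKT}_{r-1}$-edge choice and positivity argument match the paper's. The structural difference is your choice of test element: you start from an arbitrary cell $N_{ij}$ and use the projected element $z=(\liftCp_T)^J\tsr(\liftCp_U)^J$, forcing you to control the correction $z-\C_T\tsr\C_U$. The paper avoids this entirely by taking $(1,1)$ as a one-step hub: when $T_{a_1}=r$ and $U_{b_1}=r$, the transition matrix of Lemma \ref{l lift transition matrix} is trivial in the relevant block (there is no $Q'\ldneq_r Q$), so $\C_T\tsr\C_U$ \emph{is} $(\liftCp_T)^J\tsr(\liftCp_U)^J$ and lies in $N_{11}$ exactly; the paper then glues $N_{11}\to N_{ij}$ in a single step via the double-move term $\C_{T'}\tsr\C_{U'}$ in the last case of \eqref{e sP on C' C'}. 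By semisimplicity, $N_{11}\nsH_r=\nsbr{M}_{\lambda,\mu}$ already gives irreducibility, so no chain of adjacent gluings is needed.

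There is, however, a genuine gap in the step you call a ``valuation analysis.'' You observe that the coefficients of $z-\C_T\tsr\C_U$ lie in $\u\field_0\cap\ui\field_\infty$ and that multiplying by $\tfrac{1}{[2]}\sP_{r-1}$ lands you in $\field_0\cap\field_\infty$; but membership in $\field_0\cap\field_\infty$ is \emph{not} enough to conclude the contribution lies in $\u\bigl(\L_{\lambda-a_{i'}}\tsr\L_{\mu-b_j}\bigr)$. The diagonal entry of $\tfrac{1}{[2]}\sP_{r-1}$ in the fourth case of \eqref{e sP on C' C'} is $[2]$, and $\u\field_0\cdot[2]=\field_0$, which is nonzero at $\u=0$; likewise your earlier claim that multiplying by $[2]^{-1}$ ``brings all $\C$-basis coefficients into $\field_0$'' is false for this same diagonal $[2]$-entry. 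What actually rescues the argument---and what you need to say---is that the $[2]$-pole term is \emph{diagonal}: the dangerous summand $c_{AB}[2]\,\C_A\tsr\C_B$ always has $\sh(A|_{[r-1]})=\lambda-a_k$ with $k<i'$ (or $\sh(B|_{[r-1]})=\mu-b_l$ with $l<j$), and then $p_{M_{\lambda-a_{i'}}}(\C_A)=0$ (resp. $p_{M_{\mu-b_j}}(\C_B)=0$) kills it outright. Once that observation is in place the remaining off-diagonal contributions from $z-\C_T\tsr\C_U$ do land in $\u\L\tsr\L$, and your conclusion follows. Without it, the appeal to a ``valuation analysis'' does not establish what you claim.
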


\begin{proof}
We work with the basis  $\Gamma'_\lambda \tsr \Gamma'_\lambda$ of  $\nsbr{M}_{\lambda, \mu}$.

It suffices to show that $\nsbr{M}_{\lambda - a_1, \mu - b_1}$ glues to $\nsbr{M}_{\lambda - a_i, \mu - b_j}$ for  $(i,j) \neq (1, 1)$, which we do as follows:
choose $T \in \text{SYT}(\lambda)$ and $U \in \text{SYT}(\mu)$ so that
\be
\parbox{13.5cm}{
\begin{list} {(\arabic{ctr})} {\usecounter{ctr} \setlength{\itemsep}{1pt} \setlength{\topsep}{2pt}}
\item $T_{a_1} = r$, and if $i \neq 1$ then there is an edge $T \dkt{r-1} T'$ with $T'_{a_i} = r$.
\item $U_{b_1} = r$, and if  $j \neq 1$ then there is an edge $U \dkt{r-1} U'$ with $U'_{b_j} = r$.
\end{list}
}
\ee
Such tableaux exist by \eqref{e DKE complete graph}.
Then if  $i \neq 1$ and  $j \neq 1$, then  $s_{r-1} \notin R(\C_T)$,  $s_{r-1} \notin R(\C_{U})$ and  $\C_T \tsr \C_U \sP_{r-1}$ is computed using the last case of (\ref{e sP on C' C'}).  The term $\C_{T'} \tsr \C_{U'}$ appears in the sum.  The projection lemma (Lemma \ref{l projections are not too tricky}) and the fact that
\[
\{(\liftCp_{A})^J \tsr (\liftCp_{B})^J:A \in \text{SYT}(\lambda),\ {A_{a_i}} = r, \ B \in \text{SYT}(\mu), \ B_{b_j} = r\} \cong \Gamma'_{\lambda-a_i}\tsr\Gamma'_{\mu-b_j}
\]
is a $\field_0$-basis of $\L_{\lambda-a_i}\tsr\L_{\mu-b_j}$
shows that the projection of $\C_T \tsr \C_U \sP_{r-1}$ onto $\nsbr{M}_{\lambda-a_i, \mu - b_j}$ is nonzero (the hypotheses of the lemma are satisfied, which depends in a somewhat delicate way on the form of the last case of (\ref{e sP on C' C'})).
Since $\C_T \tsr \C_U \in \nsbr{M}_{\lambda - a_1, \mu - b_1}$ by Corollary \ref{c geck relative a invariant}, $\nsbr{M}_{\lambda - a_1, \mu - b_1}$ glues to $\nsbr{M}_{\lambda-a_i, \mu - b_j}$.  If  $i =1$ or $j=1$, these also glue by the same argument, possibly using the second or third case of (\ref{e sP on C' C'}) instead of the fourth.
\end{proof}

Given a vector space  $M$, let  $\tau : M \tsr M \to M \tsr M$ denote the flip  $a \tsr b \mapsto b \tsr a$.
For $a,b \in M$, put $a \cdot b = \frac{1}{2} (1 + \tau) (a\tsr b) = \frac{1}{2} (a \tsr b +  b \tsr a)$
and $a \wedge b = \frac{1}{2} (1 - \tau) (a\tsr b) = \frac{1}{2} (a \tsr b - b \tsr a).$

Let  $\L_\nu = \field_0 \Gamma'_\nu = \field_0 \Gamma_\nu$ be as defined after Theorem \ref{t transition C' to C}.  Let  $\leq$ be a total order on $\text{SYT}(\lambda)$.    Then
\[S^2 \Gamma'_\nu := \Gamma'_\nu \cdot \Gamma'_\nu = \{\C_{A} \cdot \C_{B} : A,B \in \text{SYT}(\nu),\ A \leq B \}\]
is a basis of $S^2 M_\nu$. Let $S^2 \L_\nu := \field_0 S^2 \Gamma'_\nu$ be the corresponding $\field_0$-lattice of $S^2 M_\nu$.

Similarly,
\[\Wedge^2 \Gamma'_\nu := \{\C_{A} \wedge \C_{B} : A,B \in \text{SYT}(\nu), \ A < B\}\]
is a basis of $\Wedge^2 M_\nu$. Let $\Wedge^2 \L_\nu := \field_0 \Wedge^2 \Gamma'_\nu$ be the corresponding $\field_0$-lattice of $\Wedge^2 M_\nu$.

\begin{lemma}\label{l not in triv}
Fix some $T \in \text{SYT}(\nu)$. The set
\[
\{\proj_{S' \nsbr{M}_\nu} (\C_A \cdot \C_B) : A,B \in \text{SYT}(\nu),\  A < B\} \sqcup \{\proj_{S' \nsbr{M}_\nu} (\C_A \cdot \C_A) : A \in \text{SYT}(\nu), \ A \neq T\}
\]
is a basis of $S' \nsbr{M}_\nu$.
\end{lemma}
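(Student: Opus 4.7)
The approach is a dimension count paired with an explicit nondegeneracy check for the one-dimensional kernel of $\proj_{S' \nsbr{M}_\nu}|_{S^2 M_\nu}$. Setting $d := |\text{SYT}(\nu)|$, the basis $S^2 \Gamma'_\nu$ yields $\dim S^2 M_\nu = \binom{d+1}{2}$, and the decomposition \eqref{e nsH S' iso} yields $\dim S' \nsbr{M}_\nu = \binom{d+1}{2} - 1$, which equals the cardinality of the proposed set. Since the kernel of $\proj_{S' \nsbr{M}_\nu}|_{S^2 M_\nu}$ is the $\nsbr{\epsilon}_+$-summand, a one-dimensional line spanned by some $e_+ \in S^2 M_\nu$, it suffices to verify that the coefficient of $\C_T \cdot \C_T$ in the expansion of $e_+$ in the basis $S^2 \Gamma'_\nu$ is nonzero.

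To identify $e_+$ explicitly I would use Proposition \ref{p triv in Mdual M}(i), which gives
\[
e_+ = \sum_{Q \in \text{SYT}(\nu)} C_Q \tsr \C_Q = \sum_{Q \in \text{SYT}(\nu)} \C_Q \tsr C_Q,
\]
the equality of the two sums being exactly what ensures $\tau$-invariance and hence membership in $S^2 M_\nu$. Writing $C_Q = \sum_A c_{AQ} \C_A$ via Theorem \ref{t transition C' to C} (where $c_{AQ} \in \field_0 \cap \field_\infty$ and $c_{QQ}$ specializes to $1$ at $\u = 0$), the above equality forces $c_{AB} = c_{BA}$, and a short bookkeeping calculation using $\C_A \tsr \C_A = \C_A \cdot \C_A$ and $\C_A \tsr \C_B + \C_B \tsr \C_A = 2\, \C_A \cdot \C_B$ gives
\[
e_+ = \sum_{A \in \text{SYT}(\nu)} c_{AA}\, \C_A \cdot \C_A \;+\; 2 \sum_{A < B} c_{AB}\, \C_A \cdot \C_B.
\]
In particular, the coefficient of $\C_T \cdot \C_T$ is $c_{TT}$, which is nonzero in $\field$ since $c_{TT}(0) = 1$.

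It follows that $\{\C_A \cdot \C_B : A \leq B,\ (A,B) \neq (T,T)\} \cup \{e_+\}$ is still a basis of $S^2 M_\nu$, so applying $\proj_{S' \nsbr{M}_\nu}$ (which annihilates $e_+$ and is surjective onto $S' \nsbr{M}_\nu$) produces a spanning set of $S' \nsbr{M}_\nu$ of cardinality $\dim S' \nsbr{M}_\nu$, hence a basis. The proof poses no real obstacle: the only substantive ingredient is the identification of $e_+$ in the symmetric basis, which follows cleanly from Proposition \ref{p triv in Mdual M}(i) combined with the near-triangularity of the transition matrix in Theorem \ref{t transition C' to C}.
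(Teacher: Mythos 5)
Your proof is correct and takes essentially the same route as the paper: both identify the one-dimensional summand $\field\nsbr{\epsilon}_+ \subseteq S^2 M_\nu$ via Proposition \ref{p triv in Mdual M}(i) and use Theorem \ref{t transition C' to C} to show the $\C_T \cdot \C_T$-coefficient of the spanning vector is a unit (equal to $1$ at $\u = 0$). The only cosmetic difference is that you write out the inverse transition matrix coefficients $c_{AQ}$ explicitly and observe their symmetry, whereas the paper phrases the same nondegeneracy check by reducing modulo the lattice $\u S^2\L_\nu$.
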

\begin{proof}
By Proposition \ref{p triv in Mdual M} (i), $\field \nsbr{\epsilon}_+ \subseteq S^2 M_\nu \subseteq M_\nu \tsr M_\nu$ is spanned by
\be
\label{e not in triv2}
\sum_{Q \in \text{SYT}(\nu)} C_Q \tsr \C_Q \equiv \sum_{Q \in \text{SYT}(\nu)}\C_Q \tsr \C_Q \mod \u S^2\L_{\nu},
\ee
where the equivalence is by Theorem \ref{t transition C' to C}.

As  $S^2 \Gamma'_\nu$ is a basis of  $S^2 M_\nu$, to prove the lemma, it suffices to show that  the left-hand side of \eqref{e not in triv2} is not in the span of $S^2 \Gamma'_\nu \setminus \{\C_T \cdot \C_T\}$.  And this is true because the image of $\sum_{Q \in \text{SYT}(\nu)}\C_Q \tsr \C_Q$ in $S^2\L_{\nu} / \u S^2\L_{\nu}$ is not in the span of the image of $S^2 \Gamma'_\nu \setminus \{\C_T \cdot \C_T\}$ in $S^2\L_{\nu} / \u S^2\L_{\nu}$.
\end{proof}

\begin{proposition} \label{p case lambda one from mu}
Maintain the setup of  \textsection\ref{ss gluing field nsH irreducibles}  and set $\nu = \lambda - a_k = \mu - b_l$.  If the decomposition
\[\Res_{\field\nsH_{r-1}} \nsbr{M}_{\lambda, \mu} \cong \bigoplus_{\substack{i \in [k_\lambda], j \in [k_\mu], \\ (i, j) \neq (k, l)}}  \nsbr{M}_{\lambda - a_i, \mu - b_j} \oplus S' \nsbr{M}_{\nu} \oplus \nswedge{2}{M}_\nu \oplus \field \nsbr{\epsilon}_+\]
of Proposition \ref{p r-1 restrictions} (1b) consists of distinct irreducible  $\field \nsH_{r-1}$-modules,
then $\nsbr{M}_{\lambda, \mu}$ is an irreducible $\field \nsH_r$-module.
\end{proposition}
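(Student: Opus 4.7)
The plan is to use the gluing strategy from \textsection\ref{ss gluing field nsH irreducibles}. Since $\Res_{\field\nsH_{r-1}}\nsbr{M}_{\lambda,\mu}$ is, by hypothesis, multiplicity-free into distinct irreducibles, any $\field\nsH_r$-submodule of $\nsbr{M}_{\lambda,\mu}$ is a direct sum of some subset of the listed summands. To prove irreducibility it therefore suffices to show that for each summand $N$ in the decomposition, $N\cdot\field\nsH_r$ meets every other summand nontrivially. I break this into three tasks: (i) glue the generic pieces $\nsbr{M}_{\lambda-a_i,\mu-b_j}$ (with $(i,j)\neq(k,l)$) to one another; (ii) glue each generic piece to each of $S'\nsbr{M}_\nu$, $\nswedge{2}{M}_\nu$, and $\field\nsbr{\epsilon}_+$; (iii) glue each of these three special pieces back to some generic piece. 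A variant of the argument used to cover the generic components will handle (iii) in the same stroke.

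For (i), the argument of Proposition \ref{p case lambda mu generic} applies verbatim: choose $T\in\text{SYT}(\lambda)$ and $U\in\text{SYT}(\mu)$ with $r$ in a prescribed outer corner and DKT$_{r-1}$-edges moving $r$ to another prescribed outer corner, apply $\sP_{r-1}$ using (as needed) the four cases of \eqref{e sP on C' C'}, and use Lemma \ref{l projections are not too tricky} together with Corollary \ref{c geck relative a invariant} to extract a nonzero contribution to whichever generic piece is desired. The same computation shows that each generic piece glues to the aggregate $M_\nu\tsr M_\nu \cong S'\nsbr{M}_\nu\oplus\nswedge{2}{M}_\nu\oplus\field\nsbr{\epsilon}_+$ at position $(k,l)$; the new content of (ii) is to refine this to each of the three $\field\nsH_{r-1}$-irreducible summands individually.

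For (ii), the key tool is the flip $\tau$ on $M_\nu\tsr M_\nu$ together with the explicit identification of $\field\nsbr{\epsilon}_+\subseteq S^2 M_\nu$ in Proposition \ref{p triv in Mdual M}(i) and its dual projector in Proposition \ref{p triv in Mdual M}(ii). Fix a generic pair $(i,j)\neq(k,l)$ and pick $T,T'$ in $\text{SYT}(\lambda)$ related by $T\dkt{r-1}T'$ with $T_{a_i}=r$ and $T'_{a_k}=r$, and similarly $U\dkt{r-1}U'$ in $\text{SYT}(\mu)$ with $U_{b_j}=r$ and $U'_{b_l}=r$, which exist by \eqref{e DKE complete graph}. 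Apply $\sP_{r-1}$ to the four elements $\C_T\tsr\C_U$, $\C_{T'}\tsr\C_U$, $\C_T\tsr\C_{U'}$, $\C_{T'}\tsr\C_{U'}$ using the relevant cases of \eqref{e sP on C' C'}, and take linear combinations. By the projection lemma, the image in $M_\nu\tsr M_\nu$ modulo $\u$ can be made to equal (respectively) a pure symmetric element $\C_A\cdot\C_B$ with $A\neq B$, a pure antisymmetric element $\C_A\wedge\C_B$, and a symmetric element that pairs nontrivially with the trace functional of Proposition \ref{p triv in Mdual M}(ii); by Lemma \ref{l not in triv} these three types generate $S'\nsbr{M}_\nu$, $\nswedge{2}{M}_\nu$, and $\field\nsbr{\epsilon}_+$ respectively. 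This establishes (ii).

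For (iii), start from an element of one of the three special components, lift it via the $(\liftCp)^J$-basis to an element of $\nsbr{M}_{\lambda,\mu}$ all of whose canonical-basis coordinates are nonzero only on pairs $(T,U)$ with $T_{a_k}=r=U_{b_l}=r$, and apply $\sP_{r-1}$ where $s_{r-1}\notin R(\C_T)$ or $s_{r-1}\notin R(\C_U)$; the off-diagonal cases of \eqref{e sP on C' C'} produce a term of the form $\C_{T''}\tsr\C_U$ (or $\C_T\tsr\C_{U''}$) with the large entry no longer at $a_k$ (respectively $b_l$), whose projection lands in a generic piece by Lemma \ref{l projections are not too tricky}. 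The main obstacle, and the step that requires the most care, is the bookkeeping in (ii): one must arrange the linear combinations of inputs so that the projection onto $S^2 M_\nu$ versus $\Wedge^2 M_\nu$, and further the projection onto $\field\nsbr{\epsilon}_+$ versus $S'\nsbr{M}_\nu$, can each be isolated. This is a finite combinatorial check enabled by the explicit bases above, and introduces no ingredients beyond \eqref{e sP on C' C'}, the projection lemma, and Proposition \ref{p triv in Mdual M}.
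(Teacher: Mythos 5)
Your overall gluing strategy matches the paper's, and steps (i) and much of (ii) are on the right track. But there are two substantial gaps, one of which is the real crux.

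The serious gap is the treatment of $\field\nsbr{\epsilon}_+$. You propose to hit $\field\nsbr{\epsilon}_+$ directly by arranging linear combinations of $\C_T\tsr\C_U$-type inputs whose projections onto $M_\nu\tsr M_\nu$ pair nontrivially with the trace functional of Proposition \ref{p triv in Mdual M}(ii). You do not show that this combinatorics actually produces a nonzero trace value, and it is not clear that it does: the trace is computed in the mixed basis $\Gamma'_\nu\tsr\Gamma_\nu$, so you must first convert via Theorem \ref{t transition C' to C}, and then sum the diagonal coefficients, which can cancel. Calling this ``a finite combinatorial check'' is not a proof. The paper sidesteps the issue entirely with a different technique: it argues that if $\field\nsbr{\epsilon}_+$ were a $\field\nsH_r$-direct summand, then specializing at $\u=1$ would give a one-dimensional trivial $\QQ\S_r$-submodule of $M_\lambda|_{\u=1}\tsr M_\mu|_{\u=1}$ (using $\nsbr{\epsilon}_+\sP_1=[2]^2\nsbr{\epsilon}_+$ for $r\geq 3$), which is impossible since $\lambda\neq\mu$. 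This specialization argument is cleaner and avoids the delicate canonical-basis bookkeeping that your approach requires but leaves unverified.

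A related but milder issue: for $S'\nsbr{M}_\nu$ and $\nswedge{2}{M}_\nu$, you propose to isolate them by taking suitable linear combinations of four elements $\C_T\tsr\C_U$, $\C_{T'}\tsr\C_U$, $\C_T\tsr\C_{U'}$, $\C_{T'}\tsr\C_{U'}$. The paper instead picks a single well-chosen $\C_T\tsr\C_U$ (with $T_{a_1}=r$, $T\dkt{r-1}T'$, $T'_{a_k}=r$, $U_{b_l}=r$, $U_\nu\neq T'_\nu$) and shows its image under $\sP_{r-1}$ has nonzero projection onto both $S'\nsbr{M}_\nu$ and $\nswedge{2}{M}_\nu$ simultaneously: the $\nswedge{2}{}$ projection is nonzero because the surviving term $(\liftCp_{T'})^J\tsr(\liftCp_U)^J$ has $T'_\nu\neq U_\nu$, and the $S'$ projection is nonzero by Lemma \ref{l not in triv}. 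Your step (iii) (gluing back out of the special pieces) is unnecessary in the chain argument of \textsection\ref{ss gluing field nsH irreducibles}, and the lift you describe there is also wrong as stated: by Lemma \ref{l lift transition matrix}, the expansion of $(\liftCp_T)^J$ in the lower canonical basis involves terms $\C_{T'}$ with $T'_{a_k}\neq r$, so the support is not confined to the corner $(k,l)$. Finally, you omit the base case $\lambda=(2)$, $\mu=(1,1)$, and the case $(k,l)=(1,1)$, where Corollary \ref{c geck relative a invariant} no longer places the starting element $\C_T\tsr\C_U$ inside a generic summand; the paper handles the latter by switching to the basis $\Gamma_\lambda\tsr\Gamma_\mu$ (which reverses the cell order) and using \eqref{e sP on C C} instead of \eqref{e sP on C' C'}.
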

\begin{proof}
First, if  $k_\lambda = k_\mu = 1,$ then  $\lambda = (2)$ and $\mu  = (1,1)$, and the result is clear in this case.  We will then assume $(k,l) \neq (1, 1)$ and prove the proposition using the basis $\Gamma'_\lambda \tsr \Gamma'_\lambda$; if $(k,l) = (1,1)$, the proposition can be proved in a similar way\footnote{The main change required is that \eqref{e sP on C C} must be used in place of \eqref{e sP on C' C'}; these differ by some signs which end up being harmless.} using the argument below with $a_{k_\lambda}, b_{k_\mu}$ in place of  $a_1, b_1$ and the basis $\Gamma_\lambda \tsr \Gamma_\lambda$ in place of  $\Gamma'_\lambda \tsr \Gamma'_\lambda$.

The $\field \nsH_{r-1}$-irreducible $\nsbr{M}_{\lambda - a_1, \mu - b_1}$ glues to
$\nsbr{M}_{\lambda - a_i, \mu - b_j}$ for  $(i,j) \notin \{(k,l),(1, 1)\}$ by the same argument as in the proof of Proposition \ref{p case lambda mu generic}.

The assumption $\lambda \gdneq \mu$ implies  $k \geq l$.  Thus $k >1$ since we are assuming $(k,l) \neq (1,1)$.   We will next show that  $\bigoplus_{i \leq l} \nsbr{M}_{\lambda - a_1, \mu - b_i}$ glues to  $S' \nsbr{M}_{\nu}$ and $\nswedge{2}{M}_{\nu}$.  We may assume that $|\text{SYT}(\nu)| > 1$ because this is equivalent to  $S' \nsbr{M}_{\nu}$ and $\nswedge{2}{M}_{\nu}$ being nonzero.
Thus by \eqref{e DKE complete graph}, we can choose $T, T' \in \text{SYT}(\lambda)$ and $U \in \text{SYT}(\mu)$ such that
\begin{list} {(\arabic{ctr})} {\usecounter{ctr} \setlength{\itemsep}{1pt} \setlength{\topsep}{2pt}}
\item $T_{a_1} = r$ and there is an edge $T \dkt{r-1} T'$ with $T'_{a_k} = r$.
\item $U_{b_l} = r$ and $U_\nu \neq T'_\nu$.
\end{list}
Here  $U_\nu$ denotes the subtableau of $U$ obtained by restricting $U$ to $\nu$.
The quantity $\C_T \tsr \C_U \sP_{r-1}$ is computed using the second or fourth case of \eqref{e sP on C' C'}:
if the second case applies, then the projection lemma shows that
\be
\label{e nu nu case 2}
\projres_{M_\nu \tsr M_\nu}  (\C_T \tsr \C_U \textstyle \frac{\sP_{r-1}}{[2]}) \equiv  \displaystyle \sum_{\substack{s_{r-1} \in R(\C_{A}), \\ A_{a_k} = r}} \mu(A, T)  (\liftCp_{A})^J \tsr (\liftCp_{U})^J \mod  \u \L_\nu \tsr \L_\nu;
\ee
if the fourth case applies, then a careful application of the projection lemma shows that
\be
\label{e nu nu case 4}
\projres_{M_\nu \tsr M_\nu}  (\C_T \tsr \C_U \textstyle \frac{\sP_{r-1}}{[2]}) \equiv  -\displaystyle \sum_{\substack{s_{r-1} \in R(\C_{A}), \\ A_{a_k} = r}} \mu(A, T)  (\liftCp_{A})^J \tsr (\liftCp_{U})^J \mod  \u \L_\nu \tsr \L_\nu.
\ee
Let $x$ (resp.  $-x$) denote the right-hand side of \eqref{e nu nu case 2} (resp.  \eqref{e nu nu case 4}).
Since $\pm(\liftCp_{T'})^J  \tsr (\liftCp_{U})^J$ appears in the expression for $\pm x$ and $T'_\nu \neq U_\nu$, it follows that the projection of $\pm x$ to $\Wedge^2 \L_\nu$ is nonzero.  This uses that
\[
\{(\liftCp_{A})^J \wedge (\liftCp_{B})^J:A \in \text{SYT}(\lambda),\ {A_{a_k}} = r, \ B \in \text{SYT}(\mu), \ B_{b_l} = r, \ A_\nu < B_\nu \} \cong \Wedge^2 \Gamma'_{\nu}
\]
is a $\field_0$-basis of $\Wedge^2 \L_{\nu}$.
The quantities $\pm x$ also have nonzero projection onto $S' \nsbr{M}_{\nu}$ by Lemma \ref{l not in triv}.
Finally, we need that $\C_T \tsr \C_U \in \bigoplus_{i \leq l} \nsbr{M}_{\lambda - a_1, \mu - b_i}$, which holds by Corollary \ref{c geck relative a invariant}, to conclude that $\bigoplus_{i \leq l} \nsbr{M}_{\lambda - a_1, \mu - b_i}$ glues to  $S' \nsbr{M}_{\nu}$ and $\nswedge{2}{M}_{\nu}$.

It remains to show that $\field \nsbr{\epsilon}_+ \subseteq \nsbr{M}_{\lambda - a_k, \mu - b_l}$ glues to some other  $\field \nsH_{r-1}$-irreducible of $\Res_{\field \nsH_{r-1}} \nsbr{M}_{\lambda,\mu}$. If not, then it follows that $\nsbr{\epsilon}_+|_{\u = 1}$ is a 1-dimensional $\QQ \S_r$-submodule of $\nsbr{M}_{\lambda,\mu}|_{\u = 1} \cong M_\lambda|_{\u=1} \tsr M_\mu|_{\u=1}$.  Here, the specialization $N|_{\u=1}$ of an  $\mathbf{A}$-module $N_\mathbf{A}$ is defined to be $\QQ \tsr_{\mathbf{A}} N_\mathbf{A}$, the map $\mathbf{A} \to \QQ$ given by $\u \mapsto 1$.
We are assuming $r \geq 3$, so $\nsbr{\epsilon}_+ \sP_1 = [2]^2 \nsbr{\epsilon}_+$. But then $\nsbr{\epsilon}_+|_{\u =1}$ is the trivial $\QQ \S_r$-module, which is impossible since $\lambda \neq \mu$.
\end{proof}


For any  $\field (\H_r \tsr \H_r)$ module  $M$, let $p^1_{M_{\lambda - a_i} \tsr M_{\lambda - a_j}} : M \to M$ be the $\field (\H_{r-1} \tsr \H_{r-1})$ projector with image the $M_{\lambda - a_i} \tsr M_{\lambda - a_j}$-isotypic component of  $M$. For any  $\field \nsH_{r}$-module  $\nsbr{M}$ and $h \in \nsH_r$, let $m_h : \nsbr{M} \to \nsbr{M}$ denote right multiplication by $h$.
\begin{lemma}
\label{l restriction facts}
Let $i, j \in [k_\lambda]$,  $i \neq j$.  There are the following equalities of $\field \nsH_{r-1}$-module endomorphisms of $M_\lambda \tsr M_\lambda$.
\[
\begin{array}{lrcl}
\emph{(i)} & \projres_{\nsbr{M}_{\lambda-a_i,\lambda - a_j}} \textstyle \frac{1-\tau}{2} & = & \textstyle \frac{1-\tau}{2}(p^1_{M_{\lambda-a_i} \tsr M_{\lambda - a_j}}+p^1_{M_{\lambda-a_j} \tsr M_{\lambda - a_i}})\\[2mm]
\emph{(ii)} & \projres_{\nsbr{M}_{\lambda - a_i, \lambda - a_j}} \proj_{S' \nsbr{M}_\lambda} & = & \textstyle \frac{1+\tau}{2} (p^1_{M_{\lambda - a_i} \tsr M_{\lambda - a_j}} +  p^1_{M_{\lambda - a_j} \tsr M_{\lambda - a_i}}) \\[2mm]
\emph{(iii)} & \projres_{\nsbr{M}_{\lambda - a_i, \lambda - a_j}} \proj_{S' \nsbr{M}_\lambda} m_{\sP_{r-1}} \proj_{S' \nsbr{M}_\lambda} & = &
\textstyle \frac{1+\tau}{2}( p^1_{M_{\lambda - a_i} \tsr M_{\lambda - a_j}} +   p^1_{M_{\lambda - a_j} \tsr M_{\lambda - a_i}}) m_{\sP_{r-1}}. \\
\end{array}
\]
\end{lemma}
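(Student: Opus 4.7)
The plan is to reduce the three identities to two elementary facts: (a) the $\nsH_{r-1}$-isotypic projector satisfies $\projres_{\nsbr{M}_{\lambda-a_i,\lambda-a_j}} = p^1_{M_{\lambda-a_i}\tsr M_{\lambda-a_j}}+p^1_{M_{\lambda-a_j}\tsr M_{\lambda-a_i}}$ for $i\neq j$, and (b) the twisted commutation $\tau\,p^1_{M_{\lambda-a_i}\tsr M_{\lambda-a_j}} = p^1_{M_{\lambda-a_j}\tsr M_{\lambda-a_i}}\,\tau$. I will abbreviate $p^1_{M_{\lambda-a_i}\tsr M_{\lambda-a_j}}$ by $p^1_{ij}$ below.

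For (a): the $\field(\H_{r-1}\tsr\H_{r-1})$-decomposition $\bigoplus_{k,l} M_{\lambda-a_k}\tsr M_{\lambda-a_l}$ restricts further under $\nsH_{r-1}$ so that, for $k\neq l$, each block $M_{\lambda-a_k}\tsr M_{\lambda-a_l}$ becomes $\nsbr{M}_{\lambda-a_k,\lambda-a_l}$, which is irreducible by Proposition \ref{p case lambda mu generic} applied inductively, and which is identified with $\nsbr{M}_{\lambda-a_l,\lambda-a_k}$ via \eqref{e commutativity S2H}. Hence the $\nsbr{M}_{\lambda-a_i,\lambda-a_j}$-isotypic component ($i\neq j$) is exactly $(M_{\lambda-a_i}\tsr M_{\lambda-a_j}) \oplus (M_{\lambda-a_j}\tsr M_{\lambda-a_i})$. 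For (b): the identity $\tau(h_1\tsr h_2) = (h_2\tsr h_1)\,\tau$ on $M_\lambda\tsr M_\lambda$ immediately gives the commutation.

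Given (a) and (b), item (i) is a one-line computation: expanding $(p^1_{ij}+p^1_{ji})\frac{1-\tau}{2}$ and using (b) to push $\tau$ to the left yields $\frac{1-\tau}{2}(p^1_{ij}+p^1_{ji})$. For (ii), I would write $\proj_{S'\nsbr{M}_\lambda} = \frac{1+\tau}{2} - \proj_{\nsbr{\epsilon}_+}$ via \eqref{e nsH S' iso}; since the image of $\proj_{\nsbr{\epsilon}_+}$ is a trivial $\nsH_{r-1}$-submodule, it is annihilated by $\projres_{\nsbr{M}_{\lambda-a_i,\lambda-a_j}}$ for $i\neq j$, and (ii) then follows by the same manipulation as (i) with opposite sign.

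For (iii), I would first use that $\proj_{S'\nsbr{M}_\lambda}$ is $\nsH_r$-equivariant (hence commutes with $m_{\sP_{r-1}}$) and idempotent, together with (ii), to rewrite the left-hand side as $\frac{1+\tau}{2}(p^1_{ij}+p^1_{ji})\,m_{\sP_{r-1}}\,\proj_{S'\nsbr{M}_\lambda}$. The step I expect to require the most care is the final one: deleting the trailing $\proj_{S'\nsbr{M}_\lambda}$, which amounts to showing that $\frac{1+\tau}{2}(p^1_{ij}+p^1_{ji})\,m_{\sP_{r-1}}$ annihilates the $\nsH_r$-complement $\nsbr{\epsilon}_+\oplus\nswedge{2}{M}_\lambda$. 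Since both summands are $\sP_{r-1}$-stable, I would handle them separately: $p^1_{ij}+p^1_{ji}$ kills $\nsbr{\epsilon}_+$ (same reason as in (ii)), and any $y\in\nswedge{2}{M}_\lambda$ has $\tau(y)=-y$, so $(p^1_{ij}+p^1_{ji})(y)$ is $\tau$-antisymmetric and killed by $\frac{1+\tau}{2}$. Beyond this, the only care needed is in the isotypic identifications of step (a) and in noting that $\proj_{S'\nsbr{M}_\lambda}$ is well-defined via the $\nsH_r$-decomposition $M_\lambda\tsr M_\lambda = S'\nsbr{M}_\lambda\oplus\nsbr{\epsilon}_+\oplus\nswedge{2}{M}_\lambda$.
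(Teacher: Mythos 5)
Your argument is correct, and items (i) and (ii) are handled exactly as in the paper (the paper's proof of (ii) is the same observation you make, phrased as $\projres_{\nsbr{M}_{\lambda-a_i,\lambda-a_j}}\proj_{\nssym{2}{M}_\lambda} = \projres_{\nsbr{M}_{\lambda-a_i,\lambda-a_j}}(\proj_{S'\nsbr{M}_\lambda}+\proj_{\field\nsbr{\epsilon}_+}) = \projres_{\nsbr{M}_{\lambda-a_i,\lambda-a_j}}\proj_{S'\nsbr{M}_\lambda}$). For (iii) you take a more roundabout route than necessary. You apply (ii) to rewrite the left-hand side as $\frac{1+\tau}{2}(p^1_{ij}+p^1_{ji})\,m_{\sP_{r-1}}\,\proj_{S'\nsbr{M}_\lambda}$ and then do a separate, somewhat delicate argument (using $\sP_{r-1}$-stability of $\nsbr{\epsilon}_+$ and $\nswedge{2}{M}_\lambda$, plus $\tau$-antisymmetry) to justify dropping the trailing projector. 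Notice that you mention the $\nsH_r$-equivariance and idempotency of $\proj_{S'\nsbr{M}_\lambda}$ but never actually use them in your manipulation; if you do use them, the extra argument evaporates. The paper's route is: first commute $\proj_{S'\nsbr{M}_\lambda}$ past $m_{\sP_{r-1}}$ (this is precisely what equivariance gives), merge the two idempotents, obtaining $\projres_{\nsbr{M}_{\lambda-a_i,\lambda-a_j}}\proj_{S'\nsbr{M}_\lambda} m_{\sP_{r-1}}$, and then apply (ii) -- there is no trailing projector left to remove. Your vanishing argument is sound (and it's a reasonable check that the complement really does get killed), but it is extra work that the equivariance you already have in hand renders unnecessary.
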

\begin{proof}
First note that for any  $\H_r \tsr \H_r$-module  $M$, there holds
    \[ \Res_{\nsH_{r-1}} \Res_{\H_{r-1} \tsr \H_{r-1}} M = \Res_{\nsH_{r-1}} M = \Res_{\nsH_{r-1}} \Res_{\nsH_r} M.\]
Statement (i) is immediate from the easy facts
\begin{align*}
\projres_{\nsbr{M}_{\lambda-a_i,\lambda - a_j}} &= p^1_{M_{\lambda-a_i} \tsr M_{\lambda - a_j}}+p^1_{M_{\lambda-a_j} \tsr M_{\lambda - a_i}}, \\
\tau p^1_{M_{\lambda-a_i} \tsr M_{\lambda - a_j}} &= p^1_{M_{\lambda-a_j} \tsr M_{\lambda - a_i}} \tau.
\end{align*}
This also shows that (i) holds with $1+\tau$ in place of  $1-\tau$.  Then
\[
\projres_{\nsbr{M}_{\lambda - a_i, \lambda - a_j}} \textstyle \frac{1+\tau}{2} = \projres_{\nsbr{M}_{\lambda - a_i, \lambda - a_j}} \proj_{\nssym{2}{M}_\lambda} = \projres_{\nsbr{M}_{\lambda - a_i, \lambda - a_j}} (\proj_{S'\nsbr{M}_\lambda}+\proj_{\field \nsbr{\epsilon}_+}) = \projres_{\nsbr{M}_{\lambda - a_i, \lambda - a_j}} \proj_{S' \nsbr{M}_\lambda}
\]
proves (ii).
Statement (iii) is immediate from (ii) and the fact that $\proj_{S' \nsbr{M}_\lambda}$ is a $\field \nsH_r$-module homomorphism.
\end{proof}

We say that the modules in a list are \emph{essentially distinct irreducibles} if the nonzero modules in this list are distinct irreducibles.
\begin{proposition} \label{p case wedge lambda}
Maintain the setup of  \textsection\ref{ss gluing field nsH irreducibles} and assume $\lambda \in \mathscr{P}'_r$.
If $\nswedge{2}{M}_{\lambda - a_i},\ i \in [k_{\lambda}]$, and $\nsbr{M}_{\lambda - a_i, \lambda - a_j},\ i < j,\ i, j \in [k_{\lambda}]$, are essentially distinct irreducible $\field \nsH_{r-1}$-modules, then $\nswedge{2}{M}_{\lambda}$ is an irreducible $\field \nsH_r$-module.
\end{proposition}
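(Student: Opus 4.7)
The plan is to invoke the gluing framework of subsection 4.1 using the decomposition of Proposition \ref{p r-1 restrictions}(3):
\[\Res_{\field\nsH_{r-1}} \nswedge{2}{M}_{\lambda} \cong \bigoplus_{1\leq i < j \leq k_\lambda} \nsbr{M}_{\lambda - a_i, \lambda - a_j} \oplus \bigoplus_{i \in [k_\lambda]} \nswedge{2}{M}_{\lambda - a_i}.\]
Since by hypothesis the nonzero summands are distinct irreducibles, it suffices to glue them all together by exhibiting, for each target summand, a seed element $x$ in another (already-glued) summand such that the appropriate projection of $x \sP_{r-1}$ is nonzero. If $k_\lambda = 1$ (possible for a rectangular $\lambda$), the restriction is already irreducible and we are done, so assume $k_\lambda \geq 2$.

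First I would glue the mixed summands $\nsbr{M}_{\lambda - a_i, \lambda - a_j}$ ($i < j$) to one another, following the style of Proposition \ref{p case lambda mu generic}. Take $\C_T \wedge \C_U$ with $T_{a_1} = r$ and $U_{a_2} = r$ (and $T \neq U$), which lies in $\nsbr{M}_{\lambda - a_1, \lambda - a_2}$ by Corollary \ref{c geck relative a invariant}. For a target $\nsbr{M}_{\lambda - a_{i'}, \lambda - a_{j'}}$ with $(i',j') \neq (1,2)$, refine the choice of $T, U$ using \eqref{e DKE complete graph} so that there are dual Knuth edges $T \dkt{r-1} T'$ with $T'_{a_{i'}} = r$ (if $i' \neq 1$) and $U \dkt{r-1} U'$ with $U'_{a_{j'}} = r$ (if $j' \neq 2$), giving $\mu(T',T) = \mu(U', U) = 1$. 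One then expands $(\C_T \wedge \C_U) \sP_{r-1}$ using the appropriate case of \eqref{e sP on C' C'} (depending on which of $s_{r-1}$ lies in $R(\C_T)$ or $R(\C_U)$) and checks, via Lemma \ref{l restriction facts}(i) and the projection lemma (Lemma \ref{l projections are not too tricky}), that the projection to $\nsbr{M}_{\lambda - a_{i'}, \lambda - a_{j'}}$ contains a nonzero multiple of $(\liftCp_{T'})^J \wedge (\liftCp_{U'})^J$ modulo $\u$ in the appropriate $\field_0$-lattice.

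Next I would glue each nonzero $\nswedge{2}{M}_{\lambda - a_k}$ to some mixed summand. Nonvanishing of $\nswedge{2}{M}_{\lambda - a_k}$ requires $|\text{SYT}(\lambda - a_k)| \geq 2$, so by \eqref{e DKE complete graph} I can select $T, U \in \text{SYT}(\lambda)$ with $T_{a_k} = U_{a_k} = r$, $T \neq U$, and a dual Knuth edge $T \dkt{r-1} T'$ with $T'_{a_l} = r$ for some $l \neq k$. The element $\C_T \wedge \C_U$ lies in the $\nswedge{2}{M}_{\lambda - a_k}$ summand by Corollary \ref{c geck relative a invariant}, and the relevant case of \eqref{e sP on C' C'} shows that $(\C_T \wedge \C_U) \sP_{r-1}$ contains a term proportional to $\C_{T'} \wedge \C_U$, which has nonzero projection onto the mixed summand indexed by the pair $\{k,l\}$ by Lemma \ref{l restriction facts}(i) and the projection lemma.

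The hardest part, as in Proposition \ref{p case lambda one from mu}, will be the careful bookkeeping: depending on whether $s_{r-1}$ lies in $R(\C_T)$ or $R(\C_U)$, a different case of \eqref{e sP on C' C'} applies, so one must verify that the coefficient of the desired term does not vanish after the antisymmetrization implicit in working inside $\nswedge{2}{M}_\lambda$, and that the terms in question lie in the appropriate lattice so that the projection lemma applies. Granting these verifications, which run parallel to those in the two previous propositions, the gluing framework yields that every nonzero summand of the restriction is joined into a single $\field \nsH_r$-submodule, which must therefore equal all of $\nswedge{2}{M}_\lambda$.
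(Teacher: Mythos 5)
Your overall strategy (restrict via Proposition \ref{p r-1 restrictions}(3) and glue summands by computing $\sP_{r-1}$-actions in terms of canonical bases, using the projection lemma and \eqref{e DKE complete graph}) is the right one, and it is essentially what the paper does. However, there is a concrete gap in your seed-membership claims, and because the gluing argument from \textsection\ref{ss gluing field nsH irreducibles} depends on the seed lying in the already-glued subspace, the gap breaks the logic as you have ordered it.

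You assert that $\C_T \wedge \C_U$ with $T_{a_1}=r$, $U_{a_2}=r$ ``lies in $\nsbr{M}_{\lambda-a_1,\lambda-a_2}$ by Corollary \ref{c geck relative a invariant}.'' That is not what the corollary gives. It gives $\C_T \in M_{\lambda-a_1}$ (the minimal cell) and $\C_U \in M_{\lambda-a_1}\oplus M_{\lambda-a_2}$ (cells $\leq 2$), and hence
\[
\C_T \wedge \C_U \in \nswedge{2}{M}_{\lambda-a_1}\oplus \nsbr{M}_{\lambda-a_1,\lambda-a_2},
\]
and nothing in the corollary rules out a nonzero $\nswedge{2}{M}_{\lambda-a_1}$-component when $|\text{SYT}(\lambda-a_1)|>1$. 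So this seed does not certify that $\nsbr{M}_{\lambda-a_1,\lambda-a_2}$ (as opposed to the larger sum) glues to your target. The same problem appears in your second step: for $k>1$ the element $\C_T\wedge\C_U$ with $T_{a_k}=U_{a_k}=r$ only lies in $\bigoplus_{i,j\le k} M_{\lambda-a_i}\wedge M_{\lambda-a_j}$, not purely in $\nswedge{2}{M}_{\lambda-a_k}$; your claim is correct only for $k=1$.

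The fix is to reverse the order and accumulate as you go, which is exactly what the paper does. Start with a seed $\C_T\wedge\C_U$ with $T_{a_1}=U_{a_1}=r$ (and $U\neq T$), which genuinely lies in $\nswedge{2}{M}_{\lambda-a_1}$ by Corollary \ref{c geck relative a invariant}; use a DKT edge $T\dkt{r-1}T'$ with $T'_{a_i}=r$ to glue $\nswedge{2}{M}_{\lambda-a_1}$ to $\nsbr{M}_{\lambda-a_i,\lambda-a_1}$. Only after that is the element $\C_T\wedge\C_U$ with $T_{a_1}=r$, $U_{a_2}=r$ a legitimate seed (now in $\nswedge{2}{M}_{\lambda-a_1}\oplus\nsbr{M}_{\lambda-a_1,\lambda-a_2}$, which is in the glued set), and one can glue to $\nswedge{2}{M}_{\lambda-a_2}$; the paper also observes this still works if $\nswedge{2}{M}_{\lambda-a_1}=0$. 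Repeating brings in $\nsbr{M}_{\lambda-a_2,\lambda-a_j}$ for $j>2$, then $\nswedge{2}{M}_{\lambda-a_3}$, and so on. Your two-phase plan (mixed-to-mixed first, then wedge-to-mixed) cannot be carried out as stated because the initial mixed-to-mixed step lacks a legitimate seed.
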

\begin{proof}
We work with the basis $\Wedge^2 \Gamma'_\lambda$ of  $\nswedge{2}{M}_\lambda$.

Let  $i > 1$ and assume $\nswedge{2}{M}_{\lambda - a_1}$ is nonzero.  We show that $\nswedge{2}{M}_{\lambda - a_1}$ glues to $\nsbr{M}_{\lambda - a_i, \lambda -a_1}$ as follows:
given the assumptions, we can choose $T, U \in \text{SYT}(\lambda)$ so that
\begin{list} {(\arabic{ctr})} {\usecounter{ctr} \setlength{\itemsep}{1pt} \setlength{\topsep}{2pt}}
\item $T_{a_1} = r$ and there is an edge $T \dkt{r-1} T'$ with $T'_{a_i} = r$.
\item $U \neq T$ and $U_{a_1} = r$.
\end{list}
If  $s_{r-1} \not\in R(\C_U)$, then $\C_T \tsr \C_U \sP_{r-1}$ is computed using the fourth case of \eqref{e sP on C' C'}.
Lemma \ref{l restriction facts} (i) yields the first equality and the projection lemma yields the equivalence in the following
\begin{align*}
 \projres_{\nsbr{M}_{\lambda-a_i,\lambda - a_1}} (\C_T \wedge \C_U \textstyle \frac{\sP_{r-1}}{[2]})  = \textstyle \frac{1-\tau}{2}(p^1_{M_{\lambda-a_i} \tsr M_{\lambda - a_1}}+p^1_{M_{\lambda-a_1} \tsr M_{\lambda - a_i}}) (\C_T \tsr \C_U \textstyle \frac{\sP_{r-1}}{[2]})\\
  \equiv -\displaystyle \sum_{\substack{s_{r-1} \in R(\C_{A}), \\ A_{a_i} = r}} \mu(A, T)  (\liftCp_{A})^J \wedge (\liftCp_{U})^J
   - \displaystyle \sum_{\substack{s_{r-1} \in R(\C_{B}), \\ B_{a_i} = r}} \mu(B, U)  (\liftCp_{T})^J \wedge (\liftCp_{B})^J \\
   = -\displaystyle \sum_{\substack{s_{r-1} \in R(\C_{A}), \\ A_{a_i} = r}} \mu(A, T)  (\liftCp_{A})^J \wedge (\liftCp_{U})^J
+\displaystyle \sum_{\substack{s_{r-1} \in R(\C_{B}), \\ B_{a_i} = r}} \mu(B, U)  (\liftCp_{B})^J \wedge (\liftCp_{T})^J.
\end{align*}
The equivalence is mod $\u (\L_{\lambda-a_i} \tsr \L_{\lambda-a_1} \oplus \L_{\lambda-a_1} \tsr \L_{\lambda-a_i})$.
The final line is nonzero because $(\liftCp_{T'})^J \wedge (\liftCp_{U})^J$ appears in the left sum,  $U \neq T$, and $\Gamma'_{\lambda-a_i} \tsr \Gamma'_{\lambda-a_1}$ is a $\field_0$-basis of  $\L_{\lambda-a_i} \tsr \L_{\lambda-a_1} \subseteq \nsbr{M}_{\lambda-a_i,\lambda - a_1}$.\footnote{Throughout this proof $\nsbr{M}_{\lambda-a_i,\lambda - a_1}$ is understood as a $\field \nsH_{r-1}$-submodule of  $\Res_{\field \nsH_{r-1}} \nswedge{2}{M}_\lambda$.}
A similar (but easier) argument shows that  $ \projres_{\nsbr{M}_{\lambda-a_i,\lambda - a_1}} (\C_T \wedge \C_U \frac{\sP_{r-1}}{[2]})$ is nonzero in the case $s_{r-1} \in R(\C_U)$.
Thus since $\C_T \wedge \C_U \in \nswedge{2}{M}_{\lambda - a_1}$ by Corollary \ref{c geck relative a invariant},  $\nswedge{2}{M}_{\lambda-a_1 }$ glues to $\nsbr{M}_{\lambda-a_i,\lambda - a_1}$  ($i > 1$).

We next show that\footnote{By definition, $\nsbr{M}_{\lambda-a_1, \lambda-a_2}= \nsbr{M}_{\lambda-a_2, \lambda-a_1}$; we work with the former here to keep notation more consistent with other parts of the proof of Theorem \ref{t nsH irreducibles two row case}.}
$\nsbr{M}_{\lambda-a_1, \lambda-a_2} \oplus \nswedge{2}{M}_{\lambda-a_1}$
glues to  $\nswedge{2}{M}_{\lambda-a_2}$.
Since we can assume $\nswedge{2}{M}_{\lambda-a_2}$ is nonzero, we can choose $T, U \in \text{SYT}(\lambda)$ so that
\begin{list} {(\arabic{ctr})} {\usecounter{ctr} \setlength{\itemsep}{1pt} \setlength{\topsep}{2pt}}
\item $T_{a_1} = r$ and there is an edge $T \dkt{r-1} T'$ with $T'_{a_2} = r$.
\item $U_{a_2} = r$, and $U \neq T'$.
\end{list}
If  $s_{r-1} \not\in R(\C_U)$, then $\C_T \tsr \C_U \sP_{r-1}$ is computed using the fourth case of \eqref{e sP on C' C'}.
A careful application of the projection lemma shows that
\begin{align*}
\projres_{\nswedge{2}{M}_{\lambda-a_2}} (\C_T \wedge \C_U \textstyle\frac{\sP_{r-1}}{[2]}) &= \textstyle \frac{1-\tau}{2} \projres_{M_{\lambda-a_2} \tsr M_{\lambda - a_2}} (\C_T \tsr \C_U \textstyle \frac{\sP_{r-1}}{[2]})\\
&\equiv -\displaystyle \sum_{\substack{s_{r-1} \in R(\C_{A}), \\ A_{a_2} = r}} \mu(A, T)  (\liftCp_{A})^J \wedge (\liftCp_{U})^J
\mod \u \Wedge^2 \L_{\lambda-a_2}
\end{align*}
The last line is nonzero because $(\liftCp_{T'})^J \wedge (\liftCp_{U})^J$ appears in the sum, $U \neq T'$, and $\Wedge^2 \Gamma'_{\lambda-a_2}$ is a  $\field_0$-basis of  $\Wedge^2 \L_{\lambda-a_2}$.
A similar (but easier) argument shows that  $ \projres_{\nswedge{2}{M}_{\lambda-a_2}} (\C_T \wedge \C_U \textstyle \frac{\sP_{r-1}}{[2]})$ is nonzero in the case $s_{r-1} \in R(\C_U)$.
Thus since $\C_T \wedge \C_U \in \nsbr{M}_{\lambda-a_1, \lambda-a_2} \oplus \nswedge{2}{M}_{\lambda-a_1}$ by Corollary \ref{c geck relative a invariant},  $\nsbr{M}_{\lambda-a_1, \lambda-a_2} \oplus \nswedge{2}{M}_{\lambda-a_1}$ glues to  $\nswedge{2}{M}_{\lambda-a_2}$.
Note that this argument still works if $\nswedge{2}{M}_{\lambda-a_1} = 0$.

Repeating the arguments of the previous two paragraphs, one shows that $\bigoplus_{1 < i \leq k_\lambda} \nsbr{M}_{\lambda-a_1, \lambda-a_i} \oplus \bigoplus_{i \in \{1,2\}} \nswedge{2}{M}_{\lambda-a_i}$ glues to $\nsbr{M}_{\lambda-a_2, \lambda-a_j}$ for  $j > 2$, $\bigoplus_{\substack{1 \leq i < j \leq k_\lambda, \\ i \leq 2}} \nsbr{M}_{\lambda-a_i, \lambda-a_j} \oplus \bigoplus_{i \in \{1,2\}} \nswedge{2}{M}_{\lambda-a_i}$ glues to $\nswedge{2}{M}_{\lambda-a_3}$, etc., which shows
that all the irreducible constituents of $\Res_{\field \nsH_{r-1}} \nswedge{2}{M}_{\lambda}$ are contained in a single $\field \nsH_r$-irreducible.
\end{proof}

\begin{proposition} \label{p case sym lambda}
Maintain the setup of  \textsection\ref{ss gluing field nsH irreducibles} and assume $\lambda \in \mathscr{P}'_r$.
If $S' \nsbr{M}_{\lambda - a_i},\ i \in [k_{\lambda}]$ and $\nsbr{M}_{\lambda - a_i, \lambda - a_j},\ i < j,\ i, j \in [k_{\lambda}]$, are essentially distinct irreducible $\field \nsH_{r-1}$-modules, then $S'\nsbr{M}_{\lambda}$ is an irreducible $\field \nsH_r$-module.
\end{proposition}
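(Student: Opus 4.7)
The strategy is to closely parallel the proof of Proposition~\ref{p case wedge lambda}, now working in the basis $S^2\Gamma'_\lambda$ of $\nssym{2}{M}_\lambda$, using Lemma~\ref{l restriction facts}(ii)--(iii) (which feature the symmetrizer $\frac{1+\tau}{2}$) in place of part~(i) (the antisymmetrizer $\frac{1-\tau}{2}$), and symmetric products $\C_T\cdot\C_U$ in place of wedge products. The plan has two steps. First, glue all non-trivial $\field\nsH_{r-1}$-constituents of $\Res_{\field\nsH_{r-1}} S'\nsbr{M}_\lambda$---the summands $S'\nsbr{M}_{\lambda-a_i}$ and $\nsbr{M}_{\lambda-a_i,\lambda-a_j}$ from Proposition~\ref{p r-1 restrictions}(2)---into a single $\field\nsH_r$-submodule $N\subseteq S'\nsbr{M}_\lambda$. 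Second, show that the $k_\lambda-1$ copies of $\nsbr{\epsilon}_+$ appearing in the restriction cannot contribute a separate $\field\nsH_r$-summand, forcing $N = S'\nsbr{M}_\lambda$.

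The first step mimics Proposition~\ref{p case wedge lambda} directly. For example, to glue $S'\nsbr{M}_{\lambda-a_1}$ to $\nsbr{M}_{\lambda-a_i,\lambda-a_1}$ for $i>1$, choose $T\in\text{SYT}(\lambda)$ with $T_{a_1}=r$ and an edge $T\dkt{r-1}T'$ with $T'_{a_i}=r$, together with $U\in\text{SYT}(\lambda)$ satisfying $U_{a_1}=r$ and $U\neq T$. Set $x:=\proj_{S'\nsbr{M}_\lambda}(\C_T\cdot\C_U)$; by Corollary~\ref{c geck relative a invariant} and Lemma~\ref{l not in triv}, $x$ is a nonzero vector in the $S'\nsbr{M}_{\lambda-a_1}$-isotypic piece of $\Res_{\field\nsH_{r-1}} S'\nsbr{M}_\lambda$. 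Applying Lemma~\ref{l restriction facts}(iii), the projection lemma, and \eqref{e sP on C' C'} (in the fourth or second case, depending on whether $s_{r-1}\in R(\C_U)$) to $x\sP_{r-1}$ yields a leading term proportional to $(\liftCp_{T'})^J\cdot(\liftCp_U)^J$ in $\projres_{\nsbr{M}_{\lambda-a_i,\lambda-a_1}}(x\sP_{r-1})$ modulo $\u(\L_{\lambda-a_i}\tsr\L_{\lambda-a_1}\oplus\L_{\lambda-a_1}\tsr\L_{\lambda-a_i})$. The subsequent gluings ($\nsbr{M}_{\lambda-a_1,\lambda-a_2}\oplus S'\nsbr{M}_{\lambda-a_1}$ to $S'\nsbr{M}_{\lambda-a_2}$, and then inductively up the chain $a_1,\dots,a_{k_\lambda}$) follow the identical template.

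For the second step, by semisimplicity of $\field\nsH_r$ write $S'\nsbr{M}_\lambda = N\oplus N'$. Since $\sP_j$ acts on $\Res_{\field\nsH_{r-1}} N'$ by the scalar $[2]^2$ for $j<r-1$, all of $\field\nsH_r$ acts commutatively on $N'$, and $N'$ decomposes as $N'_{[2]^2}\oplus N'_0$ into the two eigenspaces of $\sP_{r-1}$, on which $\field\nsH_r$ acts by $\nsbr{\epsilon}_+$ and by a hypothetical character $\chi$ with $\chi(\sP_j)=[2]^2$ for $j<r-1$ and $\chi(\sP_{r-1})=0$, respectively. The summand $N'_{[2]^2}$ vanishes because the multiplicity of $\nsbr{\epsilon}_+$ in $\nsbr{M}_{\lambda,\lambda}$ is exactly one---the lower bound comes from \eqref{e nsH trace}, and the upper bound from specializing at $\u=1$ and using the self-duality of $\S_r$-irreducibles, which gives multiplicity one for the trivial $\QQ\S_r$-representation in $(M_\lambda\tsr M_\lambda)|_{\u=1}$ via the diagonal action---and this unique copy lies in the complement $\field\nsbr{\epsilon}_+$ of $\ker(\trace)\supseteq S'\nsbr{M}_\lambda$. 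The summand $N'_0$ also vanishes: a character $\chi$ of the described form would specialize at $\u=1$ to a character of $\nsH_r|_{\u=1}$ whose restriction to the diagonal subalgebra $\Delta(\QQ\S_r)\subseteq\nsH_r|_{\u=1}$ sends $\Delta(s_j)=\sP_j|_{\u=1}/2-1$ to $+1$ for $j<r-1$ but to $-1$ for $j=r-1$, contradicting the fact that all $s_j$ are conjugate in $\S_r$ and so must take the same value in any one-dimensional character of $\QQ\S_r$. Hence $N'=0$ and $S'\nsbr{M}_\lambda$ is irreducible.

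The main obstacle is the careful bookkeeping of the projector $\proj_{S'\nsbr{M}_\lambda}$ in the gluing step, where Lemma~\ref{l not in triv} is essential to guarantee that the symmetrized seed $\proj_{S'\nsbr{M}_\lambda}(\C_T\cdot\C_U)$ remains nonzero in the correct isotypic piece after the $\nsbr{\epsilon}_+$-component is subtracted off; the switch from antisymmetrizer to symmetrizer changes sign patterns but otherwise does not alter the essential mechanics of the wedge-case argument. A secondary difficulty is ruling out the phantom character $\chi$ with $\chi(\sP_{r-1})=0$ but $\chi(\sP_j)=[2]^2$ for $j<r-1$; the obstruction comes from the rigidity of the diagonal embedding $\Delta(\QQ\S_r)\subseteq\nsH_r|_{\u=1}$, which forces $\chi(\Delta(s_j))$ to be independent of $j$.
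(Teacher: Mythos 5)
Your step~2 (ruling out the $\field\nsbr{\epsilon}_+$-copies) is essentially sound and in fact close in spirit to the paper's $\u=1$ specialization argument; both ultimately exploit that all $s_j$ are conjugate in $\S_r$. But step~1 has a genuine gap that traces to your choice of basis. Because both $T$ and $U$ restrict to $\lambda - a_1$, the element $\C_T \cdot \C_U$ lies in the $\field\nsH_{r-1}$-submodule $S^2 M_{\lambda - a_1} \subseteq S^2 M_\lambda$, which decomposes as $S'\nsbr{M}_{\lambda - a_1} \oplus \field\nsbr{\epsilon}_+$ and so \emph{does} contain a copy of $\nsbr{\epsilon}_+$. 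Neither Corollary~\ref{c geck relative a invariant} (which only locates $\C_T, \C_U$ in the lowest $\Gamma'$-cell) nor Lemma~\ref{l not in triv} (which only asserts the projection is nonzero, not where it lands) tells you that $\proj_{S'\nsbr{M}_\lambda}(\C_T\cdot\C_U)$ lies in the $S'\nsbr{M}_{\lambda-a_1}$-isotypic piece of $\Res_{\field\nsH_{r-1}} S'\nsbr{M}_\lambda$. In fact, its $\nsbr{\epsilon}_+$-component (with respect to $\nsH_{r-1}$) is controlled by the off-diagonal entries of the $\C \leftrightarrow C$ transition matrix (Theorem~\ref{t transition C' to C}), which vanish at $\u = 0$ but are not identically zero, and $\proj_{S'\nsbr{M}_\lambda}$ subtracts off a different, $\nsH_r$-defined copy of $\nsbr{\epsilon}_+$. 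So your seed is only known to lie in $S'\nsbr{M}_{\lambda-a_1} \oplus (\nsbr{\epsilon}_+\text{-pieces})$, and the gluing computation then establishes only that a submodule containing $S'\nsbr{M}_{\lambda-a_1}$ \emph{together with that unidentified $\nsbr{\epsilon}_+$-piece} contains $\nsbr{M}_{\lambda-a_i,\lambda-a_1}$. That is strictly weaker than what the inductive gluing scheme of \textsection\ref{ss gluing field nsH irreducibles} requires.

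The paper's proof avoids this precisely by switching to the mixed basis $\Gamma_\lambda \tsr \Gamma'_\lambda$ and seeding with $x = C_T \tsr \C_U$ where $T_{a_{k_\lambda}} = r$ and $U_{a_1} = r$. Those \emph{opposing} corners mean $C_T$ and $\C_U$ lie in the lowest cells of the upper and lower total orders respectively (Corollary~\ref{c geck relative a invariant}), so $C_T \tsr \C_U \in M_{\lambda-a_{k_\lambda}} \tsr M_{\lambda-a_1}$ exactly; since $k_\lambda \neq 1$ this $\nsH_{r-1}$-constituent is $\nsbr{M}_{\lambda-a_{k_\lambda},\lambda-a_1}$, which carries no copy of $\nsbr{\epsilon}_+$. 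Moreover Proposition~\ref{p triv in Mdual M}(ii) gives $\trace(C_T \tsr \C_U) = 0$ \emph{exactly} for $T \neq U$, so $\proj_{S'\nsbr{M}_\lambda}(x) = C_T \cdot \C_U$ with no residual $\nsbr{\epsilon}_+$-term; and the fourth case of \eqref{e sP on C C'} has no $\f\, C_T\tsr\C_U$ constant term (unlike \eqref{e sP on C' C'}), which is what makes the hypotheses of the projection lemma hold uniformly. Finally note that the sub-case $i \neq k_\lambda$, $j \neq 1$ in the paper's argument produces a double sum with coefficients $\mu(A,T)\mu(B,U)$, and the nonvanishing there requires the positivity from Theorem~\ref{t positive coefficients}, an ingredient your sketch does not invoke. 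Adopting the paper's corner-placement and mixed basis repairs step~1; your step~2 character argument can then stand as written.
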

Note that $S'\nsbr{M}_{\lambda}$ does not necessarily have a multiplicity-free decomposition into $\field \nsH_{r-1}$-irreducibles, but the proof method explained in \textsection\ref{ss gluing field nsH irreducibles} still gives most of the proof.  The  $\field \nsH_{r-1}$-irreducible  $\field \nsbr{\epsilon}_+$ may appear with multiplicity more than one, so it is handled separately.
\begin{proof}
We work with the basis $\Gamma_\lambda \tsr \Gamma'_\lambda$ of  $M_\lambda \tsr M_\lambda$.

If  $k_\lambda =1$, then  $\Res_{\field \nsH_{r-1}} S'\nsbr{M}_\lambda \cong S'\nsbr{M}_{\lambda-a_1}$, so the result holds.
Assume $k_\lambda >1$.
First we show that $\nsbr{M}_{\lambda - a_{k_\lambda}, \lambda-a_1}$ glues to $\nsbr{M}_{\lambda - a_i, \lambda - a_j}$ for $i > j$,  $(i,j) \neq (k_\lambda,1)$, as follows: choose $T, U \in \text{SYT}(\lambda)$ so that
\begin{list} {(\arabic{ctr})} {\usecounter{ctr} \setlength{\itemsep}{1pt} \setlength{\topsep}{2pt}}
\item $T_{a_{k_\lambda}} = r$, and if $i \neq k_\lambda$ then there is an edge $T \dkt{r-1} T'$ with $T'_{a_i} = r$.
\item $U_{a_1} = r$, and if  $j \neq 1$ then there is an edge $U \dkt{r-1} U'$ with $U'_{a_j} = r$.
\end{list}
Put $x = C_T \tsr \C_U$. We wish to show that
\be \label{e restrict two orders}
\projres_{\nsbr{M}_{\lambda - a_i, \lambda - a_j}} \proj_{S' \nsbr{M}_\lambda} m_{\sP_{r-1}} \proj_{S' \nsbr{M}_\lambda} x =
\textstyle \frac{1+\tau}{2}(p^1_{M_{\lambda - a_i} \tsr M_{\lambda - a_j}} +   p^1_{M_{\lambda - a_j} \tsr M_{\lambda - a_i}}) (x\sP_{r-1})
\ee
is nonzero (the equality is by Lemma \ref{l restriction facts} (iii)).  This is shown in three cases.

\noindent
The case $i \neq k_\lambda$ and $j \neq 1$: $s_{r-1} \notin R(C_T)$ and $s_{r-1} \notin R(\C_{U})$, so  $x \sP_{r-1}$ is computed using the fourth case of (\ref{e sP on C C'}). There holds
\begin{align*}
&\textstyle \frac{1+\tau}{2}(p^1_{M_{\lambda-a_i} \tsr M_{\lambda - a_j}}+p^1_{M_{\lambda-a_j} \tsr M_{\lambda - a_i}}) (x \sP_{r-1})\\
&\equiv \displaystyle \sum_{\substack{s_{r-1} \in R(C_{A}), \\ s_{r-1} \in  R(\C_B),\\ A_{a_i} = r, \ B_{a_j} = r}} \mu(A, T)\mu(B,U)  (\liftCup_{A})^J \cdot (\liftCp_{B})^J + \displaystyle \sum_{\substack{s_{r-1} \in R(C_{A}), \\ s_{r-1} \in  R(\C_B),\\ A_{a_j} = r, \ B_{a_i} = r}} \mu(A, T)\mu(B,U)  (\liftCp_{B})^J \cdot (\liftCup_{A})^J, \\
&\equiv \displaystyle \sum_{\substack{s_{r-1} \in R(C_{A}), \\ s_{r-1} \in  R(\C_B),\\ A_{a_i} = r, \ B_{a_j} = r}} \mu(A, T)\mu(B,U)  (\liftCp_{A})^J \cdot (\liftCp_{B})^J + \displaystyle \sum_{\substack{s_{r-1} \in R(C_{A}), \\ s_{r-1} \in  R(\C_B),\\ A_{a_j} = r, \ B_{a_i} = r}} \mu(A, T)\mu(B,U)  (\liftCp_{B})^J \cdot (\liftCp_{A})^J,
\end{align*}
where the first equivalence is by the projection lemma, the second is by Theorem \ref{t transition C' to C}, and the equivalences are mod $\u \L_{\lambda-a_i} \tsr \L_{\lambda-a_j}$.
The last line is nonzero because $(\liftCp_{T'})^J \cdot (\liftCp_{U'})^J$ appears in the left sum, the coefficients $\mu(A,T)\mu(B,U)$ are nonnegative (Theorem \ref{t positive coefficients}), and $\Gamma'_{\lambda-a_i} \tsr \Gamma'_{\lambda-a_j}$ is a $\field_0$-basis of $\L_{\lambda-a_i} \tsr \L_{\lambda-a_j}$.

\smallskip
\noindent
The case $i \neq k_\lambda, j = 1$ (the  $i = k_\lambda, j \neq 1$ case is similar): $x \sP_{r-1}$ is computed using the third or fourth case of (\ref{e sP on C C'}).
A careful application of the projection lemma yields
\begin{align*}
&\textstyle \frac{1+\tau}{2}(p^1_{M_{\lambda-a_i} \tsr M_{\lambda - a_j}}+p^1_{M_{\lambda-a_j} \tsr M_{\lambda - a_i}}) (x \textstyle \frac{\sP_{r-1}}{[2]})\\
&  \equiv \pm\displaystyle \sum_{\substack{s_{r-1} \in R(C_{A}), \\ A_{a_i} = r}} \mu(A, T)  (\liftCup_{A})^J \cdot (\liftCp_{U})^J
\mod \u \L_{\lambda-a_i} \tsr \L_{\lambda-a_j}
\end{align*}
The second line is nonzero because $(\liftCup_{T'})^J \cdot (\liftCp_{U})^J$ appears in sum and $\Gamma_{\lambda-a_i} \tsr \Gamma'_{\lambda-a_j}$ is a $\field_0$-basis of  $\L_{\lambda-a_i} \tsr \L_{\lambda-a_j}$.

\smallskip
It follows from Proposition \ref{p triv in Mdual M} (ii) that  $\proj_{S' \nsbr{M}_{\lambda}} x = C_T \cdot \C_U$.  Then by Lemma \ref{l restriction facts} (ii) and Corollary \ref{c geck relative a invariant}, $\projres_{\nsbr{M}_{\lambda - a_{k_\lambda}, \lambda - a_1}} \proj_{S' \nsbr{M}_{\lambda}} x = C_T \cdot \C_U$, so  $\proj_{S' \nsbr{M}_{\lambda}} x \in \nsbr{M}_{\lambda - a_{k_\lambda}, \lambda-a_1} \subseteq  S' \nsbr{M}_{\lambda}$.  Hence the left-hand side of \eqref{e restrict two orders} being nonzero implies that $\nsbr{M}_{\lambda - a_{k_\lambda}, \lambda-a_1}$ glues to $\nsbr{M}_{\lambda - a_i, \lambda - a_j}$.

Fix $i \in [k_\lambda-1]$ and set $\nu = \lambda - a_i$.
Now we show that $\bigoplus_{j \leq i} \nsbr{M}_{\lambda - a_{k_\lambda}, \lambda - a_j}$ glues to $S' \nsbr{M}_{\nu}$.
Choose $T,U \in \text{SYT}(\lambda)$ so that
\begin{list} {(\arabic{ctr})} {\usecounter{ctr} \setlength{\itemsep}{1pt} \setlength{\topsep}{2pt}}
\item $T_{a_{k_\lambda}} = r$ and there is an edge $T \dkt{r-1} T'$ with $T'_{a_i} = r$.
\item $U_{a_i} = r$ and $U \neq T'$.
\end{list}
This is possible since we can assume $S' \nsbr{M}_{\nu}$ is nonzero, which is equivalent to $|\text{SYT}(\nu)| > 1$.
Then $C_T \cdot \C_U \sP_{r-1}$ is computed using the third or fourth case of (\ref{e sP on C C'}) with  $\cdot$ in place of  $\tsr$. A careful application of the projection lemma  yields the first equivalence below
\begin{align*}
\projres_{S' \nsbr{M}_{\nu}} p^1_{M_{\nu} \tsr M_{\nu}} \big(C_T \cdot \C_U \textstyle \frac{\sP_{r-1}}{[2]}\big)
&\equiv
\projres_{S' \nsbr{M}_{\nu}} \Big( \pm \displaystyle \sum_{\substack{s_{r-1} \in R(C_A), \\ A_{a_i} = r}} \mu(A,T) (\liftCup_{A})^J \cdot (\liftCp_U)^J \Big)
\\
&\equiv \pm \displaystyle \sum_{\substack{s_{r-1} \in R(C_A), \\ A_{a_i} = r}} \mu(A,T) \projres_{S' \nsbr{M}_{\nu}}\big((\liftCp_{A})^J \cdot (\liftCp_U)^J\big)
\mod \u \proj_{S' \nsbr{M}_\nu} (\L_{\nu}\tsr \L_\nu).
\end{align*}
The second equivalence is by Theorem \ref{t transition C' to C}.
It follows from Lemma \ref{l not in triv} and $U \neq T'$ that the second line is nonzero.
By an argument similar to that in the previous paragraph, $C_T \cdot \C_U = \proj_{S' \nsbr{M}_{\lambda}} (C_T \tsr \C_U) \in \bigoplus_{j \leq i} \nsbr{M}_{\lambda - a_{k_\lambda}, \lambda - a_j}$, hence $\bigoplus_{j \leq i} \nsbr{M}_{\lambda - a_{k_\lambda}, \lambda - a_j}$ glues to $S' \nsbr{M}_{\nu}$.

By an argument similar to the  $i =1$ case of the previous paragraph, $\nsbr{M}_{\lambda - a_{k_\lambda}, \lambda - a_1}$ glues to $S' \nsbr{M}_{\lambda-a_{k_\lambda}}$.

Let  $X_\epsilon \subseteq S'\nsbr{M}_{\lambda}$ be the isotypic component of $\Res_{\field \nsH_{r-1}} S'\nsbr{M}_{\lambda}$ of irreducible type $\field \nsbr{\epsilon}_+$ and let  $X_\epsilon^\mathbf{A} := \bigcap_{i \in [r-2]} \ker(m_{\sQ_i})$ be an integral form of $X_\epsilon$, where
$m_{\sQ_i}: S'\nsbr{M}_{\lambda}^\mathbf{A} \to S'\nsbr{M}_{\lambda}^\mathbf{A}$ is right multiplication by  $\sQ_i$; there holds  $\field \tsr_\mathbf{A} X_\epsilon^\mathbf{A} \cong X_\epsilon.$  To complete the proof, it suffices to show that  $x \H_r \not \subseteq X_\epsilon^\mathbf{A}$ for any  $x\in X_\epsilon^\mathbf{A}$.
If  $x \H_r \subseteq X_\epsilon^\mathbf{A}$, then  $\Res_{\QQ \S_{r-1}} (x\H_r|_{\u=1})$ is a direct sum of copies of the trivial $\QQ \S_{r-1}$-module (where $N|_{\u=1}$ of an  $\mathbf{A}$-module $N_\mathbf{A}$ is defined to be $\QQ \tsr_{\mathbf{A}} N_\mathbf{A}$, the map $\mathbf{A} \to \QQ$ given by $\u \mapsto 1$).  It follows that $x\H_r|_{\u=1}$ is a direct sum of copies of the trivial $\QQ \S_r$-module.  But this is impossible since there are no copies of the trivial  $\QQ \S_r$-module in $S'\nsbr{M}_{\lambda}|_{\u=1}$.
\end{proof}

\subsection{Completing the proof}
\begin{proof}[Proof of Theorem \ref{t nsH irreducibles two row case}]
The proof is by induction on $r$. Given that the $\field \nsH_{r-1,2}$-irreducibles of the theorem are distinct, it follows from Propositions \ref{p case lambda mu generic}, \ref{p case lambda one from mu}, \ref{p case wedge lambda}, and \ref{p case sym lambda} that the $\field \nsH_{r,2}$-modules in (1)--(4) are irreducible.
The list of $\field \nsH_{r,2}$-irreducibles is complete because  $\Res_{\field \nsH_{r,2}} \field (\H_{r,2} \tsr \H_{r,2})$ is a faithful $ \field \nsH_{r,2}$-module and all the  $\field \nsH_{r,2}$-irreducible constituents of  $\nsbr{M}_{\lambda, \mu}$ appear in the list.
Also, the split semisimplicity of $\field \nsH_{r,2}$ follows from the proofs of Propositions \ref{p case lambda mu generic}, \ref{p case lambda one from mu}, \ref{p case wedge lambda}, and \ref{p case sym lambda} since these work just as well over any field extension of $\field$.
We now must show that the irreducibles in the list are distinct.

For this we apply Proposition \ref{p r-1 restrictions} and refine the cases as follows:
\begin{list}{} {\usecounter{ctr} \setlength{\itemsep}{1pt} \setlength{\topsep}{2pt}}
\item[(1a)] $\Res_{\field\nsH_{r-1,2}} \nsbr{M}_{\lambda, \mu} \cong \bigoplus_{i \in [k_\lambda], j \in [k_\mu]} \nsbr{M}_{\lambda - a_i, \mu - b_j}$, if $|\lambda \cap \mu| < r-1$.
\item[(1b)] $\Res_{\field\nsH_{r-1,2}} \nsbr{M}_{\lambda, \mu} \cong \bigoplus_{\substack{i \in [k_\lambda], j \in [k_\mu], \\ (i, j) \neq (k, l)}} \nsbr{M}_{\lambda - a_i, \mu - b_j} \oplus S' \nsbr{M}_{\nu} \oplus \nswedge{2}{M}_\nu \oplus \field \nsbr{\epsilon}_+$, where $\nu = \lambda - a_k = \mu - b_l$ and $\nu \neq (r-1)$.
\item[(1b$'$)] $\Res_{\field\nsH_{r-1,2}} \nsbr{M}_{(r), (r-1,1)} \cong \nsbr{M}_{(r-1), (r-2,1)} \oplus \field \nsbr{\epsilon}_+$.
\item[(2)] $\Res_{\field\nsH_{r-1,2}} S' \nsbr{M}_{\lambda} \cong \bigoplus_{1 \leq i < j \leq k_\lambda} \nsbr{M}_{\lambda - a_i, \lambda - a_j} \oplus \bigoplus_{i \in [k_\lambda]} S' \nsbr{M}_{\lambda - a_i} \oplus \field \nsbr{\epsilon}_+^{\oplus k_\lambda -1}$, for $+\lambda \in \nsP_{r,2}$, $\lambda \neq (r-1,1)$.
\item[(2$'$)] $\Res_{\field\nsH_{r-1,2}} S' \nsbr{M}_{(r-1,1)} \cong  \nsbr{M}_{(r-1), (r-2,1)} \oplus S' \nsbr{M}_{(r-2,1)} \oplus \field \nsbr{\epsilon}_+$, $r > 2$.
\item[(3)] $\Res_{\field\nsH_{r-1,2}} \nswedge{2}{M}_{\lambda} \cong \bigoplus_{1 \leq i < j \leq k_\lambda} \nsbr{M}_{\lambda - a_i, \lambda - a_j} \oplus \bigoplus_{i \in [k_\lambda]} \nswedge{2}{M}_{\lambda - a_i}$, for $-\lambda \in \nsP_{r,2}$, \newline $\lambda \neq (r-1,1)$.
\item[(3$'$)] $\Res_{\field\nsH_{r-1,2}} \nswedge{2}{M}_{(r-1,1)} \cong  \nsbr{M}_{(r-1), (r-2,1)} \oplus \nswedge{2}{M}_{(r-2,1)}$, $r > 2$.
\item[(4)] $\Res_{\field\nsH_{r-1,2}} \field \nsbr{\epsilon}_+ \cong \field \nsbr{\epsilon}_+$.
\end{list}
Note that for  $r=3$, $S' \nsbr{M}_{(1,1)}$ and $\nswedge{2}{M}_{(1,1)}$ are zero in the right-hand sides of (2$'$) and (3$'$), respectively.

For $r \leq 3$, we check by hand that all these irreducibles are distinct. In particular, we must check that $\nsbr{M}_{(3), (2, 1)} \not \cong S' \nsbr{M}_{(2,1)}$, which happen to have isomorphic restrictions to $\nsH_{2,2}$.

Assuming that $r > 3$ we will show that this list of irreducibles does not contain repetitions by showing that the irreducibles have distinct restrictions to $\field \nsH_{r-1,2}$. We do this in two steps:
\begin{list} {(\Alph{ctr})} {\usecounter{ctr} \setlength{\itemsep}{1pt} \setlength{\topsep}{2pt}}
\item The $\field \nsH_{r-1,2}$ restrictions of any two irreducibles of a given type above are nonisomorphic.
\item The $\field \nsH_{r-1,2}$ restriction of an irreducible of type ($\alpha$) is not isomorphic to the $\field \nsH_{r-1, 2}$ restriction of an irreducible of type ($\beta$), if $\alpha \neq \beta$.
\end{list}
Claim (A) is straightforward: for example, to see that two irreducibles of type (1a) are distinct, suppose  $M$ is a  $\field \nsH_{r,2}$-module of type (1a) and $\Res_{\field \nsH_{r-1,2}} M \cong \bigoplus_{i \in [l]} \nsbr{M}_{\nu^{(2i-1)}, \nu^{(2i)}}$, for some $\nu^{(j)} \vdash r-1$.  The set of partitions
$\{\nu^{(i)} \cup \nu^{(j)}: i, j \in [2l],\ |\nu^{(i)} \cup \nu^{(j)}| = r\}$ consists of two partitions, call them $\lambda$ and  $\mu$.  Then $M = \nsbr{M}_{\lambda,\mu}$.

For claim (B), we can look at which $\field \nsH_{r-1,2}$ restrictions have no occurrences of an irreducible of the form $S' \nsbr{M}_\nu$ and which ones have at least one occurrence of an irreducible of the form $S' \nsbr{M}_\nu$, and similarly for the forms $\nswedge{2}{M}_\nu$ and $\field \nsbr{\epsilon}_+$.
This yields the claim (B) for all pairs of types except (2) and (2$'$), (3) and (3$'$), and (1b$'$) and (4) which are all easy to check directly.

The part of the theorem about the decomposition of $\field (\H_{r,2} \tsr \H_{r,2})$-modules into $\field \nsH_{r,2}$-irreducibles is immediate from the definitions in \textsection\ref{ss some representation theory of nsH}.
\end{proof}

\begin{remark}
It is possible that this proof would be easier using a Hecke algebra analog of Young's orthogonal basis (see \cite{Wenzl}) instead of the lower and upper canonical bases.  However, we believe it to be important to understand the action of $\nsH_{r,2}$ on the lower and upper canonical basis of $ \nsbr{M}_{\lambda,\mu}$ anyway. The canonical bases also have the advantage that all computations except the projections onto  $\field \nsH_{r-1,2}$-irreducible isotypic components take place over $\mathbf{A}[\frac{1}{[2]}]$ rather than $\field$ or some extension of $\field$.
Moreover, once the results of  \textsection\ref{ss lifts} are in place, the only thing we need to know about the  $\S_r$-graphs $\Gamma'_\lambda$ and  $\Gamma_\lambda$ are the edges corresponding to dual Knuth transformations; it is likely that a proof using a Hecke orthogonal basis would amount to showing the existence of certain dual Knuth transformations in a similar way.
\end{remark}

\section{Seminormal bases}
\label{s Seminormal bases}
We recall the definition of a seminormal basis from \cite{RamSeminormal}, observe that $\field\nsH_{r,2}$-irreducibles have seminormal bases, and give combinatorial labels for the elements of these bases.

\begin{definition}\label{d seminormal}
Given a chain of semisimple $\field$-algebras $\field\cong H_1 \subseteq H_2 \subseteq \dots \subseteq H_r$ and an $H_r$-module $N_\lambda$, a \emph{seminormal basis} of $N_\lambda$ is a $\field$-basis $B$ of $N_\lambda$ compatible with the restrictions in the following sense: there is a partition $B = B_{\mu^1} \sqcup \dots \sqcup B_{\mu^k}$ such that $N_\lambda \cong N_{\mu^1} \oplus \dots \oplus N_{\mu^k}$ as $H_{r-1}$-modules, where $N_{\mu^i} = \field B_{\mu^i}$. Further, there is a partition of each $B_{\mu^i}$ that gives rise to a decomposition of $N_{\mu^i}$ into $H_{r-2}$-irreducibles, and so on, all the way down to $H_1$.
\end{definition}
If the restriction of an $H_i$-irreducible to $H_{i-1}$ is multiplicity-free for all $i$, then a seminormal basis of an  $H_r$-irreducible is unique up to a diagonal transformation.

A consequence of Theorem \ref{t nsH irreducibles two row case} and Proposition \ref{p r-1 restrictions} is that the restriction of a $ \field \nsH_{r,2}$-irreducible to $\field \nsH_{r-1,2}$ is multiplicity-free.  Thus each $\field \nsH_{r,2}$-irreducible  $\nsbr{M}_\alpha$,  $\alpha \in \nsP_{r,2}$, has a seminormal basis  $\nsbr{\text{SN}}_\alpha$ that is unique up to a diagonal transformation. We adopt the convention to take the seminormal basis with respect to the chain $\field\nsH_{J_1} \subseteq \cdots \subseteq \field\nsH_{J_{r-1}} \subseteq \field\nsH_{J_{r}}$, where  $J_i = \{s_1,\ldots,s_{i-1}\}$ and  $\nsH_L$ (for $L \subseteq S$) is the subalgebra of $\nsH_{r,2}$ generated by  $\sP_s$,  $s \in L$.

For $\lambda, \mu \vdash r$ with $\ell(\lambda), \ell(\mu) \leq 2$, $M_\lambda \tsr M_\mu$ has a multiplicity-free decomposition into $\nsH_{r,2}$-modules (by Theorem \ref{t nsH irreducibles two row case}). Thus we can also define a seminormal basis  $\nsbr{ \text{SN}}_{\lambda,\mu}$ of $M_\lambda \tsr M_\mu$ to be the union of the seminormal bases of its $\field \nsH_{r,2}$-irreducible constituents.


We are interested in these seminormal bases primarily as a tool for constructing a canonical basis of a $\field \nsH_{r,2}$-irreducible that is compatible with its decomposition into irreducibles at $\u =1$, as described in \cite[\textsection19]{BMSGCT4}.  Even though the irreducibles of  $\field \nsH_{r,2}$ are close to those of  $\field(\H_{r,2}\tsr\H_{r,2})$,
the seminormal basis  $\nsbr{\text{SN}}_{\lambda,\mu}$ of $M_\lambda\tsr M_\mu$ using the chain  $\field\nsH_{J_1} \subseteq \cdots \subseteq \field \nsH_{J_{r-1}} \subseteq \field\nsH_{J_r}$ is significantly different from the seminormal basis using the chain $\field(\H_{1,2} \tsr \H_{1,2}) \subseteq \cdots \subseteq \field (\H_{r-1,2} \tsr \H_{r-1,2}) \subseteq \field (\H_{r,2} \tsr \H_{r,2})$.
Thus even though the representation theory of the nonstandard Hecke algebra alone is not enough to understand Kronecker coefficients,
there is hope that the seminormal bases $\nsbr{\text{SN}}_{\lambda,\mu}$ will yield a better understanding of Kronecker coefficients.

\begin{remark}
The $\field \nsH_6$-module  $\nsbr{M}_{(4,1,1),(3,2,1)}$ is irreducible and its $\field \nsH_5$ restriction is not multiplicity-free.  However, we suspect that $\field \nsH_{r-1}$ restrictions of $\field \nsH_{r}$-irreducibles are very often multiplicity-free and, if not, the multiplicities are small.
\end{remark}

\subsection{Combinatorics of seminormal bases}
For  $\lambda, \mu \vdash r$,  $\ell(\lambda), \ell(\mu) \leq 2$, define a bijection
\[\text{SYT}(\lambda) \times \text{SYT}(\mu) \xrightarrow{\alpha_{\lambda,\mu}} \nsbr{\text{SN}}_{\lambda,\mu} \]
inductively as follows. Maintain the notation of \eqref{e ai definition} for the outer corners of $\lambda$ and $\mu$.
In what follows let $(T, U) \in \text{SYT}(\lambda) \times \text{SYT}(\mu)$ and $i$ and  $j$ be such that $T_{a_i} = r$ and $U_{b_j} = r$.
Let $Y_\lambda$ be the tableau with entries $2c-1, 2c$ in column $c$ for each column of  $\lambda$ of height 2.
For convenience, we identify  the basis $\nsbr{\text{SN}}_{\lambda,\mu}$ with the corresponding subset of one-dimensional subspaces of $M_\lambda \tsr M_\mu$.
\begin{list}{(\roman{ctr})} {\usecounter{ctr} \setlength{\itemsep}{1pt} \setlength{\topsep}{2pt}}
\item If $\lambda \neq \mu$, set
   \[\alpha_{\lambda,\mu}(T, U) = \alpha_{\lambda-a_i,\mu-a_j}(T_{\lambda-a_i},\ U_{\lambda-a_j}).\]
\item If $\lambda = \mu$, then set
\[\alpha_{\lambda,\mu}(T, U) =
\begin{cases}
\field \nsbr{\epsilon}_+ \subseteq S^2 M_\lambda & \text{if  $(T,U) = (Y_\lambda,Y_\lambda)$}, \\
\alpha_{\lambda-a_i,\mu-a_j}(T_{\lambda-a_i},\ U_{\lambda-a_j}) & \text{otherwise},
\end{cases}
\]
where $\alpha_{\lambda-a_i,\mu-a_j}(T_{\lambda-a_i},\ U_{\lambda-a_j})$ is interpreted as a seminormal basis element of
\[\left\{
\begin{array}{ll}
M_{\lambda-a_i}\tsr M_{\lambda-a_j} \subseteq \Res_{\field \nsH_{r-1,2}} M_\lambda\tsr M_\lambda&   \text{if $i = j$,} \\
M_{\lambda-a_i}\tsr M_{\lambda-a_j} \subseteq \Res_{\field \nsH_{r-1,2}}S' \nsbr{M}_\lambda & \text{if  $i < j$},\\
M_{\lambda-a_i}\tsr M_{\lambda-a_j} \subseteq \Res_{\field \nsH_{r-1,2}}\nswedge{2}{M}_\lambda & \text{if  $i > j$}.
\end{array}
\right.
\]
\end{list}

Given Proposition \ref{p r-1 restrictions}  and Theorem \ref{t nsH irreducibles two row case}, it is clear that  $\alpha_{\lambda,\mu}$ is a well-defined bijection.




\begin{example}
The seminormal basis element  { \setlength{\cellsize}{9pt} $\alpha_{(3,2),(3,2)}\left( \tiny\tableau{1&2&4\\3&5}, \tiny\tableau{1&3&4\\2&5} \right)$ } is a nonzero element of $S' \nsbr{M}_{(3,2)} \cap S' \nsbr{M}_{(3,1)} \cap S' \nsbr{M}_{(2,1)} \cap \nsbr{M}_{(2), (1,1)}$, where these are modules for $\field\nsH_{J_5}$, $\field\nsH_{J_{4}}$, $\field\nsH_{J_{3}}$, and $\field\nsH_{J_{2}}$, respectively.

The next two tables partially describe the bijection $\alpha_{(3,2),(3,2)}$; they give the  $\field\nsH_{J_5}$ and  $\field \nsH_{J_4}$-irreducibles that contain the seminormal basis element corresponding to each  $(T,U) \in \text{SYT}((3,2)) \times \text{SYT}((3,2))$, where row labels correspond to  $T$ and column labels correspond to  $U$; the basis element just described is in bold.
\[
\setlength{\cellsize}{10pt}\small\begin{array}{l|ccccc}
\rule[-13pt]{0pt}{0pt} & \footnotesize\tableau{1&2&3\\4&5} & \footnotesize\tableau{1&2&4\\3&5} & \footnotesize\tableau{1&3&4\\2&5} & \footnotesize\tableau{1&2&5\\3&4} & \footnotesize\tableau{1&3&5\\2&4} \\
\hline \\
\footnotesize\tableau{1&2&3\\4&5} & S' \nsbr{M}_{(3,2)} & S' \nsbr{M}_{(3,2)} & S' \nsbr{M}_{(3,2)} & S' \nsbr{M}_{(3,2)} & S' \nsbr{M}_{(3,2)} \\ \\
\footnotesize\tableau{1&2&4\\3&5} & \nswedge{2}{M}_{(3,2)} & S' \nsbr{M}_{(3,2)} & \mathbf{S' \nsbr{M}_{(3,2)}} & S' \nsbr{M}_{(3,2)} & S' \nsbr{M}_{(3,2)} \\ \\
\footnotesize\tableau{1&3&4\\2&5} & \nswedge{2}{M}_{(3,2)} & \nswedge{2}{M}_{(3,2)} & S' \nsbr{M}_{(3,2)} & S' \nsbr{M}_{(3,2)} & S' \nsbr{M}_{(3,2)} \\ \\
\footnotesize\tableau{1&2&5\\3&4} & \nswedge{2}{M}_{(3,2)} & \nswedge{2}{M}_{(3,2)} & \nswedge{2}{M}_{(3,2)} & S' \nsbr{M}_{(3,2)} & S' \nsbr{M}_{(3,2)} \\ \\
\footnotesize\tableau{1&3&5\\2&4} & \nswedge{2}{M}_{(3,2)} & \nswedge{2}{M}_{(3,2)} & \nswedge{2}{M}_{(3,2)} & \nswedge{2}{M}_{(3,2)} & \field\nsbr{\epsilon}_+ \\ \\
\end{array} \]
\[
\setlength{\cellsize}{10pt} \small
\begin{array}{l|ccccc}
\rule[-13pt]{0pt}{0pt} & \footnotesize\tableau{1&2&3 \\ 4&5} & \footnotesize\tableau{1&2&4 \\ 3&5} & \footnotesize\tableau{1&3&4 \\ 2&5} & \footnotesize\tableau{1&2&5 \\ 3&4} & \footnotesize\tableau{1&3&5 \\2&4} \\
\hline \\
\footnotesize\tableau{1&2&3 \\ 4&5} & S' \nsbr{M}_{(3,1)} & S' \nsbr{M}_{(3,1)} & S' \nsbr{M}_{(3,1)} & \nsbr{M}_{(3,1),(2,2)} & \nsbr{M}_{(3,1),(2,2)} \\ \\
\footnotesize\tableau{1&2&4 \\ 3&5} & \nswedge{2}{M}_{(3,1)} & S' \nsbr{M}_{(3,1)} & \mathbf{S' \nsbr{M}_{(3,1)}} & \nsbr{M}_{(3,1),(2,2)} & \nsbr{M}_{(3,1),(2,2)} \\ \\
\footnotesize\tableau{1&3&4 \\ 2&5} & \nswedge{2}{M}_{(3,1)} & \nswedge{2}{M}_{(3,1)} & \field\nsbr{\epsilon}_+ & \nsbr{M}_{(3,1),(2,2)} & \nsbr{M}_{(3,1),(2,2)} \\ \\
\footnotesize\tableau{1&2&5 \\ 3&4} & \nsbr{M}_{(2,2),(3,1)} & \nsbr{M}_{(2,2),(3,1)} & \nsbr{M}_{(2,2),(3,1)} & S' \nsbr{M}_{(2,2)} & S' \nsbr{M}_{(2,2)} \\ \\
\footnotesize\tableau{1&3&5 \\ 2&4} & \nsbr{M}_{(2,2),(3,1)} & \nsbr{M}_{(2,2),(3,1)} & \nsbr{M}_{(2,2),(3,1)} & \nswedge{2}{M}_{(2,2)} &  \field\nsbr{\epsilon}_+ \\ \\
\end{array}
\]
\end{example}

\section{Enumerative consequence}
\label{s enumerative consequence}
Let $C_r = \frac{1}{r+1}\binom{2r}{r}$ be the $r$-th Catalan number. Theorem \ref{t nsH irreducibles two row case} has the following corollary.
\begin{corollary} \label{c enumerative 2 row case}
The algebra $\field \nsH_{r, 2}$ has dimension $\binom{C_r}{2} - \binom{r}{\halfr} + \halfr + 2$.
\end{corollary}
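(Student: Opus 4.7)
The plan is to exploit the split semisimplicity of $\field \nsH_{r,2}$ supplied by Theorem \ref{t nsH irreducibles two row case}, which gives $\dim \field \nsH_{r,2} = \sum_{\alpha \in \nsP_{r,2}} (\dim \nsbr{M}_\alpha)^2$, and then collapse this sum to the desired closed form using standard identities for two-row standard Young tableaux. Write $f^\lambda := |\text{SYT}(\lambda)|$ for brevity.

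First I would record the dimensions of the four families of irreducibles: $\dim \nsbr{M}_{\lambda,\mu} = f^\lambda f^\mu$, $\dim S'\nsbr{M}_\lambda = \binom{f^\lambda+1}{2} - 1$ (using \eqref{e nsH S' iso}), $\dim \nswedge{2}{M}_\lambda = \binom{f^\lambda}{2}$, and $\dim \field\nsbr{\epsilon}_+ = 1$. A one-line expansion then yields
\[
\bigl(\dim S'\nsbr{M}_\lambda\bigr)^2 + \bigl(\dim \nswedge{2}{M}_\lambda\bigr)^2 = \tfrac{1}{2}\bigl((f^\lambda)^4 - (f^\lambda)^2 - 2 f^\lambda + 2\bigr).
\]
Since the right-hand side vanishes when $f^\lambda = 1$, the restriction $\lambda \in \mathscr{P}'_{r,2}$ can be harmlessly relaxed to $\lambda \in \mathscr{P}_{r,2}$ (the only extra shape is $(r)$).

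The key step is a cancellation. For the off-diagonal pairs I would use
\[
\sum_{\{\lambda,\mu\} \subseteq \mathscr{P}_{r,2},\, \lambda \neq \mu} (f^\lambda f^\mu)^2 = \tfrac{1}{2}\Bigl(\bigl(\sum\nolimits_\lambda (f^\lambda)^2\bigr)^2 - \sum\nolimits_\lambda (f^\lambda)^4\Bigr) = \tfrac{1}{2}\bigl(C_r^2 - \sum\nolimits_\lambda (f^\lambda)^4\bigr),
\]
invoking the classical fact $\sum_{\lambda \in \mathscr{P}_{r,2}} (f^\lambda)^2 = \dim \H_{r,2} = C_r$. Adding the extended diagonal contribution above together with the $+1$ from $\field\nsbr{\epsilon}_+$, the fourth-power terms cancel, leaving
\[
\dim \field \nsH_{r,2} = \tfrac{1}{2}\bigl(C_r^2 - C_r - 2\sum\nolimits_\lambda f^\lambda + 2|\mathscr{P}_{r,2}|\bigr) + 1.
\]

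To finish, I would substitute $|\mathscr{P}_{r,2}| = \halfr + 1$ together with the telescoping identity $\sum_{\lambda \in \mathscr{P}_{r,2}} f^\lambda = \binom{r}{\halfr}$, which follows from the two-row hook-length formula $f^{(r-k,k)} = \binom{r}{k} - \binom{r}{k-1}$. Rearranging produces $\binom{C_r}{2} - \binom{r}{\halfr} + \halfr + 2$, as required. There is no substantive obstacle: the whole argument is arithmetic once Theorem \ref{t nsH irreducibles two row case} is in hand, and the only mild care needed is keeping track of the distinction between $\mathscr{P}_{r,2}$ and $\mathscr{P}'_{r,2}$, which dissolves because the diagonal contribution automatically vanishes at the excluded shape $\lambda = (r)$.
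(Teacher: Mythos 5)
Your argument is correct and is essentially the same as the paper's: both compute $\dim_\field \field\nsH_{r,2}$ from the complete list of irreducibles using split semisimplicity and then simplify with $\sum_{\ell(\lambda)\le 2}(f^\lambda)^2 = C_r$, $\sum_{\ell(\lambda)\le 2} f^\lambda = \binom{r}{\halfr}$, and $|\mathscr{P}_{r,2}| = \halfr+1$. The only cosmetic difference is organizational: the paper computes the difference $\dim\field\nsH_{r,2} - \dim\field S^2\H_{r,2}$ (which has a simple closed form) and then adds back $\binom{C_r+1}{2}$, and obtains $\sum f^\lambda = \binom{r}{\halfr}$ by identifying it with $\dim\ind_P^{\S_r}\text{triv}$ rather than by the telescoping hook-length calculation you use; your direct cancellation of the quartic terms accomplishes the same simplification.
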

\begin{proof}
It is well known that $\dim_\field(\field \H_{r, 2}) = C_r$ and therefore $\dim_\field(\field S^2 \H_{r, 2}) = \binom{C_r + 1}{2}$. On the other hand, the list of irreducibles of $\field  S^2 \H_{r, 2}$ given in Proposition-Definition \ref{p S2Hr representations} and the split semisimplicity of $\field S^2 \H_{r, 2}$ imply that
\be \label{e dim S2 H}
\dim_\field(\field S^2 \H_{r,2}) = \sum_{\substack{\lambda \gdneq \mu, \\ \ell(\lambda), \ell(\mu) \leq 2}} (f_\lambda f_\mu)^2 + \sum_{\ell(\lambda) \leq 2} \left( {\textstyle \binom{f_\lambda+1}{2}^2 + \binom{f_\lambda}{2}^2 } \right),
\ee
where $f_\lambda = \dim_\field(M_\lambda) = |\text{SYT}(\lambda)|$.

The list of $\field \nsH_{r,2}$-irreducibles from Theorem \ref{t nsH irreducibles two row case} and the split semisimplicity of $\field \nsH_{r,2}$ imply that
\be \label{e dim nsH}
\dim_\field(\field \nsH_{r,2}) = \sum_{\substack{\lambda \gdneq \mu, \\ \ell(\lambda), \ell(\mu) \leq 2}} (f_\lambda f_\mu)^2 + \sum_{\ell(\lambda) \leq 2,\ \lambda \neq (r)} {\textstyle \left( \left( \binom{f_\lambda+1}{2} - 1 \right)^2 + \binom{f_\lambda}{2}^2 \right)} + 1.
\ee
Taking the difference of the right-hand sides of (\ref{e dim S2 H}) and (\ref{e dim nsH}) then yields the first of the following string of equalities.
\[
\begin{array}{lrl}
\dim_\field(\field \nsH_{r, 2}) &= & \dim_\field(\field S^2 \H_{r,2}) + \sum_{\ell(\lambda) \leq 2,\ \lambda \neq (r)} \left( -2 \binom{f_\lambda + 1}{2} + 1 \right) \\[2mm]
&=& \binom{C_r + 1}{2} - \sum_{\ell(\lambda) \leq 2} f_\lambda^2 - \sum_{\ell(\lambda) \leq 2} f_\lambda + \left( \sum_{\ell(\lambda) \leq 2} 1 \right) + 1\\[2mm]
&=& \binom{C_r + 1}{2} - C_r - \sum_{\ell(\lambda) \leq 2} f_\lambda + (\halfr + 1)+ 1\\[2mm]
&=& \binom{C_r}{2} - \binom{r}{\halfr} + \halfr + 2.
\end{array}
\]
The second equality follows from $\dim_\field(\field S^2 \H_{r, 2}) = \binom{C_r + 1}{2}$, the third equality comes from counting the dimension of the split semisimple algebra $\field \H_{r,2}$ in two ways, and the fourth from the fact that $\dim_\CC(\ind_{P}^{\S_r} \text{triv}) = \sum_{\ell(\lambda) \leq 2} f_\lambda$, where $P$ is the maximal parabolic subgroup $(\S_r)_{S \setminus s_{\halfr}}$ of $\S_r$.
\end{proof}

\section*{Acknowledgments}
I am grateful to Ketan Mulmuley for helpful conversations and to John Wood, James Courtois, and Michael Bennett for  help typing and typesetting figures.

\bibliographystyle{plain}
\bibliography{mycitations}

\def\cprime{$'$} \def\cprime{$'$}
\begin{thebibliography}{10}

\bibitem{Sami}
Sami~H. {Assaf}.
\newblock {Dual Equivalence Graphs I: A combinatorial proof of LLT and
  Macdonald positivity}.
\newblock {\em ArXiv e-prints}, May 2010.
\newblock {\tt arXiv:1005.3759}.

\bibitem{BMSGCT4}
J.~{Blasiak}, K.~D. {Mulmuley}, and M.~{Sohoni}.
\newblock {Geometric Complexity Theory IV: nonstandard quantum group for the
  Kronecker problem}.
\newblock {\em {A}r{X}iv e-prints}, October 2011.
\newblock {\tt arXiv:cs/0703110v3}.

\bibitem{B0}
Jonah Blasiak.
\newblock ${W}$-graph versions of tensoring with the $\mathcal{S}_{n}$ defining
  representation.
\newblock {\em Journal of Algebraic Combinatorics}.
\newblock To appear; {\tt arXiv:0809.4810v1}.

\bibitem{Bnsbraid}
Jonah Blasiak.
\newblock {Nonstandard braid relations and Chebyshev polynomials}.
\newblock {\em ArXiv e-prints}, October 2010.
\newblock {\tt arXiv:1010.0421}.

\bibitem{BProjected}
Jonah Blasiak.
\newblock {Quantum Schur-Weyl duality and projected canonical bases}.
\newblock {\em ArXiv e-prints}, February 2011.
\newblock {\tt arXiv:1102.1453}.

\bibitem{F}
Sergey Fomin.
\newblock {\em \emph{Knuth equivalence, jeu de taquin, and the
  Littlewood-Richardson rule}, Appendix {I} in Enumerative Combinatorics, vol.
  2}, volume~62 of {\em Cambridge Studies in Advanced Mathematics}.
\newblock Cambridge University Press, Cambridge, 1999.

\bibitem{KL}
David Kazhdan and George Lusztig.
\newblock Representations of {C}oxeter groups and {H}ecke algebras.
\newblock {\em Invent. Math.}, 53(2):165--184, 1979.

\bibitem{L2}
George Lusztig.
\newblock Cells in affine {W}eyl groups.
\newblock In {\em Algebraic groups and related topics ({K}yoto/{N}agoya,
  1983)}, volume~6 of {\em Adv. Stud. Pure Math.}, pages 255--287.
  North-Holland, Amsterdam, 1985.

\bibitem{canonical}
K.~Mulmuley.
\newblock Geometric complexity theory {VIII}: On canonical bases for the
  nonstandard quantum groups.
\newblock Technical Report TR 2007-15, Computer Science Department, The
  University of Chicago, 2007.

\bibitem{GCT7}
Ketan Mulmuley.
\newblock Geometric complexity theory {V}{I}{I}: Nonstandard quantum group for
  the plethysm problem.
\newblock {\em CoRR}, abs/0709.0749, 2007.

\bibitem{GCT4}
Ketan Mulmuley and Milind~A. Sohoni.
\newblock Geometric complexity theory {IV}: {Q}uantum group for the {K}ronecker
  problem.
\newblock {\em CoRR}, abs/cs/0703110, 2007.

\bibitem{RamSeminormal}
Arun Ram.
\newblock Seminormal representations of {W}eyl groups and {I}wahori-{H}ecke
  algebras.
\newblock {\em Proc. London Math. Soc. (3)}, 75(1):99--133, 1997.

\bibitem{Wenzl}
Hans Wenzl.
\newblock Hecke algebras of type {$A_n$} and subfactors.
\newblock {\em Invent. Math.}, 92(2):349--383, 1988.

\end{thebibliography}
\end{document}